   \def\MR#1{}
\newtheorem{theorem}{Theorem}
\newtheorem{lemma}[theorem]{Lemma}
\newtheorem{proposition}[theorem]{Proposition}
\newtheorem{conjecture}[theorem]{Conjecture}
\newtheorem{corollary}[theorem]{Corollary}
\theoremstyle{definition}
\newtheorem{definition}[theorem]{Definition}
\newtheorem{example}[theorem]{Example}
\newtheorem{question}[theorem]{Question}
\theoremstyle{remark}
\newtheorem{remark}[theorem]{Remark}
\def\Sym{\mathrm{Sym}}
\begin{document}
\title{Quantifying metric approximations of discrete groups}

\author{Goulnara Arzhantseva}
\address{Universit\"at Wien, Fakult\"at f\"ur Mathematik\\
Oskar-Morgenstern-Platz 1, 1090 Wien, Austria.}
\email{goulnara.arzhantseva@univie.ac.at}

\author{Pierre-Alain Cherix}
\address{Universit\'{e} de Gen\`{e}ve,
Section de Math\'{e}matiques, 2-4 rue du Li\`{e}vre, Case postale
64, 1211 Gen\`{e}ve 4, Switzerland.
}
\email{pierre-alain.cherix@unige.ch}

\date{}
\subjclass[2010]{20E26, 20F69, 20C99, 03C20}
\keywords{Residually finite groups, sofic and hyperlinear groups, metric ultraproducts, amenable groups, full residual finiteness growth, F\o lner function.}

\thanks{This research was partially supported by the European Research Council (ERC) grant of Goulnara Arzhantseva, ``ANALYTIC'' grant agreement no.\ 259527. } 
\baselineskip=16pt

\begin{abstract}
We introduce and systematically study a profile function whose asymptotic behavior
quantifies the dimension or the size of a metric approximation of a finitely generated group $G$ by a family of groups $\mathcal{F}=\{(G_{\alpha},d_{\alpha}, k_{\alpha}, \varepsilon _{\alpha })\}_{\alpha\in I, }$ where each group $G_\alpha$ is equipped with a bi-invariant metric $d_{\alpha}$ and a dimension $k_{\alpha}$, for strictly positive real numbers $\varepsilon _{\alpha }$ such that $%
\inf_{\alpha }\varepsilon _{\alpha }>0$. 
Through the notion of a residually amenable profile that we introduce, our approach generalizes classical isoperimetric (aka F\o lner) profiles of amenable groups and recently introduced functions quantifying residually finite groups. Our viewpoint is much more general and covers hyperlinear and sofic approximations as well as many other metric approximations such as weakly sofic, weakly hyperlinear, and linear sofic approximations. 
\end{abstract}
\maketitle

\section{Introduction}
Approximation is ubiquitous in mathematics. In the theory of groups, it is particularly natural to approximate
infinite groups by finite ones. A fundamental realization of this idea has lead Mal'cev (1940's) and  P. Hall (1955) to the notion of a residually finite group:
a group where the algebraic structure on any finite fixed set of elements is exactly as if these elements
were in a suitable finite quotient of the group.

Once a concept of approximation is coined, a crucial question is how to compare distinct approximations of the same object, and, in particular,
how to quantify the way an object is approximated. For residually finite groups, there are two main  ways of 
quantifying the approximation of an infinite group by finite ones. The first way is to compute how many subgroups of a given finite index
the group possesses. This is a classical subject of research on the subgroup growth, initiated by M. Hall (1949), which allows to enumerate how the group can be approximated by a finite quotient of a prescribed cardinality. The second way of quantifying is to compute the minimal cardinality among all possible finite quotients that detect  the algebraic structure of
the fixed finite set of elements of the residually finite group. This viewpoint is more recent and it is about the so-called full residual finiteness growth, see below for the definition.

In this paper, we push this second idea of quantifying of approximations of infinite groups significantly beyond the class of residually finite groups and
apply it to much more general \emph{metric} approximations of infinite groups in contrast to classical \emph{algebraic} approximations.
Metric approximations are approximations by groups equipped with bi-invariant metrics (see the next section for precise definitions)
and they are very natural to study. Intuitively, we require that the algebraic operation on a finite set of group elements of the approximated group is almost
as if these elements were in the approximating group, where `almost' refers to the fixed bi-invariant metric. This simple idea has gained a major importance following Gromov's
introduction of \emph{sofic} groups (= groups metrically approximated by symmetric groups of finite degrees, endowed with the normalized Hamming distance)
 and his settlement, for sofic groups, of Gottschalk's surjunctivity conjecture~(1973) in topological dynamics.  
Another renowned example of metric approximation is that by unitary groups of finite rank, endowed with the normalized Hilbert-Schmidt distance.
This defines the class of \emph{hyperlinear} groups, appeared in the context of Connes' embedding problem~(1972) in operator algebra.

We encompass both sofic and hyperlinear groups  as well as their generalizations such as linear sofic groups, weakly sofic groups, and weakly hyperlinear groups into a general framework of metric approximations by groups with, in addition to a prescribed bi-invariant metric, a dimension or a size, associated with each of the approximating groups. For instance, the dimension of a finite symmetric group is chosen to be its degree, of a unitary group -- its rank, of a finite group -- its cardinality, etc. Our general quantification function, called \emph{metric profile}, is then defined to be, given a finite set of group elements in the approximated group, e.g. the ball of finite radius with respect to the word length metric, the minimal dimension among all possible metric approximations which `almost' preserve the algebraic structure of  this finite set. Viewed within sofic groups, our approach is orthogonal to the recently emerged theory of sofic entropy started in the seminal work of L. Bowen (such a theory is not yet available for an a priori wider class of hyperlinear groups). Restricted to residually finite groups, the contrast between Bowen's viewpoint and our approach is exactly the distinction 
between the subgroup growth of a group and the full residual finiteness growth, respectively.

Since metric approximations generalize classical algebraic approximations,
the previously known functions, quantifying `exact' approximations (versus `almost' ones), 
occur to be upper bounds for our metric profile. For example, a knowledge about the full residual finiteness growth of a residually finite group
gives an estimate on the sofic and on the hyperlinear profiles of such a group.
If the approximating groups are amenable,
then besides a chosen dimension, they carry an associated isoperimetric function, the famous F\o lner function.
We make use of this classical function and of our metric profile philosophy to define
the \emph{residually amenable profile} for every residually amenable group (and more generally, for every group locally embeddable into
amenable ones). This allows to extend a classical study of F\o lner functions of amenable groups to
non-amenable groups metrically approximable by amenable ones.

The main aim of this paper is to provide a necessary theoretical base for a further more specific quantitative analysis of metric approximations of concrete discrete groups.
We meticulously compare our metric profile with previously investigated quantifying functions alluded to above.
Since the classes of groups we study are preserved under several group-theoretical operations such as taking subgroups, direct and free products,
extensions by amenable groups,  restricted wreath products, etc.,
we also provide the corresponding estimates on the suitable metric profiles.
On the way, we collect some crucial examples and finally formulate a number of open problems.\smallskip

{\bf Acknowledgment.} The main concepts of this paper have been introduced in 2008. Since then this work has been presented 
on several occasions at the universities of Neuch\^atel, Copenhagen, Aix-Marseille, at
the ENS Lyon, ETH Z\"urich, MF Oberwolfach, and CRM Barcelona. The first author is
grateful to colleagues at these institutions for the hospitality.

The authors thank Martino Lupini for his help on the final version of this text.

%%%%%%%%%%%%%%%%%%%%%%%%%
\section{$\mathcal{F}$-approximations and $\mathcal{F}$-profile}

Let $I$ denote an \emph{index set}. We let $\mathcal{F}=\left(
G_{\alpha }, d_{\alpha}, k_{\alpha },\varepsilon _{\alpha }\right) _{\alpha \in I}$,
where $G_{\alpha }$ is a  group with a bi-invariant distance $d_{\alpha
} $ and identity element $e_{\alpha }$, $k_{\alpha }$ is a natural number
that can be thought as the \emph{dimension }of $G_{\alpha }$, and $%
\varepsilon _{\alpha }$ is a strictly positive real number such that $%
\inf_{\alpha }\varepsilon _{\alpha }>0$. 

Let $G$ be a countable discrete
group with identity $e_{G}$ and a distinguished generating set $S\subseteq G$. We
denote by $\left\vert g\right\vert _{S}$ the length of an element $g\in G$
with respect to the word length metric defined by $S$. We let $B_{G,S}\left(
n\right) $ be the ball of center $e_{G}$ and radius $n$ with respect to the
word length metric induced by $S$. 

\begin{definition}[Approximation]\label{def:app}
Let $n\in \mathbb{N}$ and $\varepsilon_{\alpha }
>0$. An \emph{$\left( n,\varepsilon _{\alpha }\right) $-approximation} of $\left(
G,S\right) $ by a group $G_{\alpha }$ is a function $\pi\colon G\rightarrow G_{\alpha }$
satisfying the following:

\begin{enumerate}
\item[(1)] $d_{\alpha }\left( \pi \left( g\right) \pi \left( h\right) ,\pi \left(
gh\right) \right) <1/n$ for every $g,h\in B_{G,S}\left( n\right) $ with $gh\in B_{G,S}\left( n\right)$, and

\item[(2)] $d_{\alpha }\left( \pi \left( g\right) ,\pi \left( h\right) \right)
>\varepsilon _{\alpha }-1/n$ for every $g,h\in B_{G,S}\left( n\right) $ such
that $g\neq h$.
\end{enumerate}
Such an $\left( n,\varepsilon _{\alpha }\right) $-approximation is said to be \emph{of dimension $k_{\alpha }.$}

An \emph{$\mathcal{F}$-approximation} of $\left( G,S\right) $ is a sequence $\left(
\pi _{n}\right)_{n\in \mathbb{N}} $ such that, for every $n\in \mathbb{N}$, $\pi _{n}$ is an $%
\left( n,\varepsilon _{\alpha }\right) $-approximation of $\left( G,S\right) 
$ by $G_{\alpha }$ for some $\alpha \in I$. 

A finitely generated group $G$
is \emph{$\mathcal{F}$-approximable} if $\left( G,S\right) $ admits an $\mathcal{F}$%
-approximation for some (or, equivalently, any) finite generating set $%
S\subseteq G$.
\end{definition}

The above condition (1) is often called `almost homomorphism on the ball' while
condition (2) is termed as `uniform injectivity'.

In the definition below we convene that the minimum of the
empty set is $+\infty $.

\begin{definition}[Profile and dimension]\label{def:profile}
Let $G$ be a finitely generated group with a finite
generating set $S$. The \emph{$\mathcal{F}$-profile} of $G$ is the function $%
\mathcal{D}_{G,S}^{\mathcal{F}}\colon\mathbb{N}\rightarrow \mathbb{N}\cup \left\{
+\infty \right\} $ defined by\footnote{We set $\min\{\emptyset\}=+\infty$.}
$$
\mathcal{D}_{G,S}^{\mathcal{F}}\left( n\right)=\min\left\{ {k\in \mathbb{N}} \mid \exists \hbox{ an $\left(
n,\varepsilon _{\alpha }\right) $-approximation of $G$ by $%
G_{\alpha }$ of dimension $k_{\alpha }=k$}\right\}.
$$
The \emph{$\mathcal{F}$-dimension} of $G$
is defined by
$$
\dim_{G,S}^{\mathcal{F}}=\limsup_{n\rightarrow +\infty}\frac{1}{n}\log \mathcal{D}%
_{G,S}^{\mathcal{F}}\left( n\right). 
$$
\end{definition}

Observe that $G$ is $\mathcal{F}$-approximable if and only if the function $%
\mathcal{D}_{G,S}^{\mathcal{F}}$ is everywhere finite.  We write simply $
\mathcal{D}_{G,S}$ when the family is irrelevant.

\begin{remark}
One can consider families $\mathcal{F}$ as above where $d_{\alpha }$ is not
necessarily a metric but just a bi-invariant \emph{pseudometric}. One can
always reduce to the case of bi-invariant metric (rather than pseudometric)
by replacing $\left( G_{\alpha },d_{\alpha }\right) $ with $\left( G_{\alpha
}/N_{\alpha },\overline{d}_{\alpha }\right)$, where $N_{\alpha }$ is the
normal subgroup $\left\{ g\in G_{\alpha }:d_{\alpha }\left( g, e_\alpha\right)
=0\right\} $ and $\overline{d}_{\alpha }$ is the bi-invariant metric induced
by $d_{\alpha }$ on the quotient.
\end{remark}

We consider the
quasi-order $\preccurlyeq $ for functions $\mathcal{D}_{1},\mathcal{D}_{2}\colon %
\mathbb{N}\rightarrow \mathbb{N}$ defined by $\mathcal{D}_{1}\preccurlyeq 
\mathcal{D}_{2}$ if and only if there exists a constant $C\in \mathbb{N}$
such that $\mathcal{D}_{1}\left( n\right) \leqslant CD_{2}\left( Cn\right) $ for
every $n\in \mathbb{N}$. We also let $\simeq $ be the equivalence relation
associated with the quasi-order $\preccurlyeq $. Thus $\mathcal{D}_{1}\simeq 
\mathcal{D}_{2}$ iff $\mathcal{D}_{1}\preccurlyeq \mathcal{D}_{2}$ and $%
\mathcal{D}_{2}\preccurlyeq \mathcal{D}_{1}$.

If $S,S^{\prime }$ are two finite-generating sets of $G$, then there exists $%
C\in \mathbb{N}$ such $S\subseteq B_{G,S^{\prime }}\left( C\right) $ and $%
S^{\prime }\subseteq B_{G,S}\left( C\right) $. Therefore $B_{G,S}\left(
n\right) \subseteq B_{G,S^{\prime }}\left( Cn\right) $ and $B_{G,S^{\prime
}}\left( n\right) \subseteq B_{G,S}\left( Cn\right) $ for every $n\in \mathbb{N%
}$. This easily implies that $\mathcal{D}_{G,S}\left( n\right) \leqslant \mathcal{%
D}_{G,S^{\prime }}\left( Cn\right) $ and $\mathcal{D}_{G,S^{\prime }}\left(
n\right) \leqslant \mathcal{D}_{G,S}\left( Cn\right) $. In particular, the $\simeq 
$-equivalence class of $\mathcal{D}_{G,S}$ does not depend on the choice of the finite
generating set $S$. We denote such an equivalence class by $\mathcal{D}_{G}$.

\begin{lemma}
\label{Lemma:approximate-homomorphism} Let $G$ be a group and $F\subseteq G$ be
a finite symmetric subset containing the identity $e_{G}$ of $G$. Let $\varepsilon >0$ and
$H$ be a bi-invariant metric group with metric $d_{H}$ and $\pi
\colon G\rightarrow H$ be a function such that $d_{H}\left( \pi \left( g\right)
\pi \left( h\right) ,\pi \left( gh\right) \right) <\varepsilon $ for every $%
g,h\in F$. 

For every $n\in \mathbb{N}$ and $g_{1},\ldots ,g_{n}\in G$ such
that $g_{1}^{\varepsilon _{1}}\cdots g_{k}^{\varepsilon _{k}}\in F$ for
every $\varepsilon _{1},\ldots ,\varepsilon _{n}\in \left\{ +1,-1\right\} $
and $k\in \left\{ 1,2,\ldots ,n\right\} $ the following assertions hold:

\begin{enumerate}
\item $d_{H}\left( \pi \left( e_{G}\right) ,e_{H}\right) <\varepsilon $;

\item $d_{H}(\pi \left( g^{-1}\right) ,\pi \left( g\right)
^{-1})<2\varepsilon $;

\item $d_{H}\left( \pi \left( g_{1}\cdots g_{n}\right) ,\pi \left(
g_{1}\right) \cdots \pi \left( g_{n}\right) \right) <\left( n-1\right)
\varepsilon $, whenever $n>1$;

\item $d_{H}\left( \pi \left( g_{1}^{\varepsilon _{1}}\right) \cdots \pi
\left( g_{n}^{\varepsilon _{n}}\right) ,\pi \left( g_{1}\right)
^{\varepsilon _{1}}\cdots \pi \left( g_{n}\right) ^{\varepsilon _{n}}\right)
<2n\varepsilon $;

\item $d_{H}\left( \pi \left( g_{1}^{\varepsilon _{1}}\cdots
g_{n}^{\varepsilon _{n}}\right) ,\pi \left( g_{1}\right) ^{\varepsilon
_{1}}\cdots \pi \left( g_{n}\right) ^{\varepsilon _{n}}\right) <\left(
3n-1\right) \varepsilon $.
\end{enumerate}
\end{lemma}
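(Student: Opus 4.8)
The plan is to rely on a single consequence of bi-invariance and then treat the five assertions as increasingly long telescoping estimates. The only geometric input I would use is that bi-invariance gives
$$d_H(xy,x'y')\le d_H(xy,x'y)+d_H(x'y,x'y')=d_H(x,x')+d_H(y,y'),$$
since $d_H(xy,x'y)=d_H(x,x')$ by right-invariance and $d_H(x'y,x'y')=d_H(y,y')$ by left-invariance; iterating yields $d_H(a_1\cdots a_m,b_1\cdots b_m)\le\sum_{i=1}^m d_H(a_i,b_i)$, and I will use freely the translation identities $d_H(u,v)=d_H(wu,wv)=d_H(uw,vw)$. Everything beyond this is bookkeeping of which elements lie in $F$.

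First, for (1) I would put $g=h=e_G\in F$ in the hypothesis, obtaining $d_H(\pi(e_G)^2,\pi(e_G))<\varepsilon$, and cancel one copy of $\pi(e_G)$ on the left to get $d_H(\pi(e_G),e_H)<\varepsilon$. For (2), symmetry of $F$ gives $g,g^{-1}\in F$, so I would translate on the left by $\pi(g)$ and write $d_H(\pi(g^{-1}),\pi(g)^{-1})=d_H(\pi(g)\pi(g^{-1}),e_H)\le d_H(\pi(g)\pi(g^{-1}),\pi(e_G))+d_H(\pi(e_G),e_H)<\varepsilon+\varepsilon$, the first term by the hypothesis applied to $g,g^{-1}$ and the second by (1). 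For (3) I would induct on $n$, peeling off the last factor: bounding $d_H(\pi(g_1\cdots g_n),\pi(g_1)\cdots\pi(g_n))$ by $d_H(\pi(g_1\cdots g_n),\pi(g_1\cdots g_{n-1})\pi(g_n))+d_H(\pi(g_1\cdots g_{n-1})\pi(g_n),\pi(g_1)\cdots\pi(g_n))$, where the first summand is $<\varepsilon$ by the hypothesis applied to the length-$(n-1)$ prefix $g_1\cdots g_{n-1}$ and $g_n$, both of which lie in $F$, and the second summand equals $d_H(\pi(g_1\cdots g_{n-1}),\pi(g_1)\cdots\pi(g_{n-1}))<(n-2)\varepsilon$ by right-invariance and the inductive hypothesis.

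The last two follow by combining the previous estimates factor by factor. For (4) I would set $a_i=\pi(g_i^{\varepsilon_i})$ and $b_i=\pi(g_i)^{\varepsilon_i}$ and apply the iterated bi-invariance bound: the terms with $\varepsilon_i=+1$ vanish, while each term with $\varepsilon_i=-1$ is $d_H(\pi(g_i^{-1}),\pi(g_i)^{-1})<2\varepsilon$ by (2), so the total is $<2n\varepsilon$. For (5) I would apply (3) to the elements $g_1^{\varepsilon_1},\ldots,g_n^{\varepsilon_n}$ — whose prefixes $g_1^{\varepsilon_1}\cdots g_k^{\varepsilon_k}$ are exactly the elements of $F$ furnished by the hypothesis (with all sign choices) — to get $d_H(\pi(g_1^{\varepsilon_1}\cdots g_n^{\varepsilon_n}),\pi(g_1^{\varepsilon_1})\cdots\pi(g_n^{\varepsilon_n}))<(n-1)\varepsilon$, and then insert (4) via the triangle inequality to reach $(n-1)\varepsilon+2n\varepsilon=(3n-1)\varepsilon$.

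The step I expect to be the main obstacle is not the arithmetic of the constants $(n-1)$, $2n$, $3n-1$ but the discipline of ensuring that every invocation of the almost-homomorphism property is made at a pair of elements genuinely belonging to $F$; this is precisely what the assumption that \emph{all} prefixes with \emph{all} sign patterns lie in $F$ is designed to guarantee, and it is what legitimizes both the inductive step of (3) and the reduction of (5) to (3) and (4). The remaining care is to check that the strict inequalities survive the summations, which they do because in each bound the leading term is already strict.
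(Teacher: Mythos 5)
Your proof is correct and follows essentially the same route as the paper's: your computations for (1) and (2) are verbatim the paper's, and the paper dispatches (3)--(5) with exactly the induction, the term-by-term use of (2), and the combination of (3) and (4) that you spell out. The one caveat --- inherited from the lemma's statement and equally present in the paper's terse argument --- is that the inductive step of (3) and the appeal to (2) inside (4) invoke almost-multiplicativity at the individual elements $g_k^{\pm1}$, whereas the hypothesis literally guarantees membership in $F$ only for the signed prefixes $g_1^{\varepsilon_1}\cdots g_k^{\varepsilon_k}$; this is an imprecision of the statement rather than of your proof, and it is harmless in every application in the paper, where the $g_i$ are generators and hence lie in $F$ themselves.
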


\begin{proof}
Since $e_{G}\in F$, using the hypothesis on $\pi $ and the bi-invariance of $%
d_{H}$ we have 
\begin{equation*}
d_{H}\left( \pi \left( e_{G}\right) ,e_{H}\right) =d_{H}(\pi \left(
e_{G}\right) ^{2},\pi \left( e_{G}\right) )<\varepsilon .
\end{equation*}%
This proves (1). Similarly, for (2) we have%
\begin{equation*}
d_{H}(\pi \left( g^{-1}\right) ,\pi \left( g\right) ^{-1})=d_{H}(\pi \left(
g\right) \pi \left( g^{-1}\right) ,e_{H})\leqslant d_{H}\left( \pi \left(
e_{G}\right) ,e_{H}\right) +\varepsilon <2\varepsilon \text{.}
\end{equation*}%
On can easily prove (3) by induction using the hypothesis, and (4) using
(2). Finally (5) follows from (3) and (4).
\end{proof}

In the following we suppose that $G$ is a finitely generated group with a
presentation $\left\langle X \mid R\right\rangle $ given by generators $X$ and relators $R$. We denote by $S$ the
symmetric generating set of $G$ associated with $X$, i.e. $S=X\sqcup X^{-1}$. Without loss of generality,
we assume that $R=R^{-1}$, i.e. $R$ contains inverses of all of its elements.
We denote by $%
F_{X} $ the free group over the alphabet $X$. For a word $w$ over the letters from $X\sqcup X^{-1}$ we let $%
\left\vert w\right\vert _{F_{X}}$ be its length. In this situation, in order
to define a homomorphism from $G$ to another group $H$, it is often
convenient to define a homomorphism from $F_{X}$ to $H$, in such a way that
any word in $R$ is mapped to the identity element $e_H\in H$. This defines a unique
group homomorphism from $G$ to $H$, and any group homomorphism arises in this
way. In this spirit, we define natural variants of the $\mathcal{F%
}$-profile as follows (recall our convention that the minimum of the empty
set is $+\infty $):

\begin{itemize}
\setlength\itemsep{8pt}
\item $\mathcal{W}_{G,S}^{\mathcal{F}}\left( n\right) $ is the least $k\in 
\mathbb{N}$ such that for some $\alpha $ with $k_{\alpha }=k$ there exists a
homomomorphism $\varphi\colon F_{X}\rightarrow G_{\alpha }$ such that for any
word $w\in F_{X}$ of length at most $n$ one has that $d_{\alpha }\left(
\varphi \left( w\right) ,e_{\alpha }\right) >\varepsilon _{\alpha }$ if $w$
does not represent the identity of $G$ and $d_{\alpha }\left( \varphi
\left( w\right) ,e_{\alpha }\right) <1/n$ otherwise;
\item $\mathcal{R}_{G,S}^{\mathcal{F}}\left( n\right) $ is the least $k\in 
\mathbb{N}$ such that for some $\alpha $ with $k_{\alpha }=k$ there exists a
homomomorphism $\varphi\colon F_{X}\rightarrow G_{\alpha }$ such that for any
word $w\in F_{X}$ of length at most $n$ that does not represent the identity
of $G$ one has that $d_{\alpha }\left( \varphi \left( w\right) ,e_{\alpha
}\right) >\varepsilon _{\alpha }$, and $d_{\alpha }\left( \varphi \left(
r\right) ,e_{\alpha }\right) <1/n$ for any relator $r\in R$ of length at most $n$.
\end{itemize}

In the following proposition, we establish precise quantitative
relations among the notions of profile $\mathcal{D}_{G,S}^{\mathcal{F}},%
\mathcal{W}_{G,S}^{\mathcal{F}},\mathcal{R}_{G,S}^{\mathcal{F}}$ we have just introduced. 
To begin with, we recall some notions from combinatorial group
theory. 

If $w\in F_{X}$ is a word
that represents the identity of $G=\left\langle X \mid R\right\rangle$, then the corresponding combinatorial
\emph{area} $\mathcal{A}_{G,S}\left( w\right) $ is defined to be the least $\ell $
such that $w$ can be written as a product of $\ell $ conjugates of relators
from $R$. The \emph{Dehn function}, denoted by $Dehn_{G,S}\left( m\right) $, is defined to be
the largest combinatorial area $\mathcal{A}_{G,S}\left( w\right)$, where $w$
ranges among all the words of length at most $m$ representing the identity
of $G$. We let $N_{G,S}\left( m\right) $ be the minimum, among all
representations $w=\left( \eta _{1}r_{1}\eta _{1}^{-1}\right) \cdots \left(
\eta _{\ell }r_{\ell }\eta _{\ell }^{-1}\right) $ as product of $\ell \leqslant
Dehn_{G,S}\left( m\right) $ conjugates of relators $r_{1},\ldots ,r_{\ell }\in R$, of the maximum of the length of $r_{1},\ldots ,r_{\ell }$.

\begin{proposition}\label{Prop:DWR}
Under the notation above, the following relations between the functions $%
\mathcal{D}_{G,S}^{\mathcal{F}},$ $\mathcal{W}_{G,S}^{\mathcal{F}},\mathcal{R}%
_{G,S}^{\mathcal{F}}$ hold for all sufficiently large $m$'s:

\begin{enumerate}
\item $\mathcal{W}_{G,S}^{\mathcal{F}}\left( m\right) \leqslant \mathcal{D}%
_{G,S}^{\mathcal{F}}\left( 3m^2\right) $;

\item $\mathcal{D}_{G,S}^{\mathcal{F}}\left( m\right) \leqslant \mathcal{W}%
_{G,S}^{\mathcal{F}}\left( 3m\right) $;

\item $\mathcal{W}_{G,S}^{\mathcal{F}}\left( m\right) \leqslant \mathcal{R}%
_{G,S}^{\mathcal{F}}\left( \max \left\{ Dehn_{G}\left( m\right)
m,N_{G,S}\left( m\right) \right\} \right) $;

\item $\mathcal{R}_{G,S}^{\mathcal{F}}\left( m\right) \leqslant \mathcal{W}%
_{G,S}^{\mathcal{F}}\left( m\right) $.
\end{enumerate}

In particular, if one of the functions $\mathcal{D}_{G,S}^{\mathcal{F}},%
\mathcal{W}_{G,S}^{\mathcal{F}},\mathcal{R}_{G,S}^{\mathcal{F}}$ is
everywhere finite, then all the others are everywhere finite.
\end{proposition}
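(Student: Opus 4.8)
The plan is to prove the four inequalities by direct constructions, exploiting that a genuine homomorphism $\varphi\colon F_X\to G_\alpha$ can be turned into an approximation $G\to G_\alpha$ by pre-composing with a choice of geodesic words, and conversely that an approximation yields a homomorphism by prescribing its values on the generators $X$ and extending freely. Two of the bounds are essentially immediate. For (4), I would simply observe that a $\mathcal{W}$-witness $\varphi$ at level $m$ is already an $\mathcal{R}$-witness at level $m$: its separation condition on nontrivial words of length $\le m$ is verbatim the one required by $\mathcal{R}$, and since every relator $r\in R$ with $|r|_{F_X}\le m$ represents the identity of $G$, the trivial-word estimate $d_\alpha(\varphi(r),e_\alpha)<1/m$ supplies exactly the relator condition of $\mathcal{R}$. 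For (2), given a $\mathcal{W}$-witness $\varphi$ at level $3m$, I would fix for each $g\in G$ a geodesic word $w_g$ with $|w_g|_{F_X}=|g|_S$ and set $\pi(g)=\varphi(w_g)$. For $g,h,gh\in B_{G,S}(m)$ the word $w_gw_hw_{gh}^{-1}$ represents the identity and has length $\le 3m$, so bi-invariance and the trivial-word bound give $d_\alpha(\pi(g)\pi(h),\pi(gh))=d_\alpha(\varphi(w_gw_hw_{gh}^{-1}),e_\alpha)<1/(3m)<1/m$, which is condition (1) of Definition \ref{def:app}; for $g\ne h$ in $B_{G,S}(m)$ the word $w_gw_h^{-1}$ is nontrivial of length $\le 2m\le 3m$, whence $d_\alpha(\pi(g),\pi(h))>\varepsilon_\alpha>\varepsilon_\alpha-1/m$, which is condition (2). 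Thus $\pi$ is an $(m,\varepsilon_\alpha)$-approximation of dimension $\mathcal{W}_{G,S}^{\mathcal{F}}(3m)$.

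For (1) I would go the other way. Starting from a $(3m^2,\varepsilon_\alpha)$-approximation $\pi$, define $\varphi\colon F_X\to G_\alpha$ on generators by $\varphi(x)=\pi(x)$ and extend to a homomorphism. Writing a word $w$ of length $\ell\le m$ as $w=s_1\cdots s_\ell$ with $s_i\in S$ and letting $\bar w=s_1\cdots s_\ell\in G$ be the element it represents, all partial products lie in $B_{G,S}(m)\subseteq B_{G,S}(3m^2)$, so the approximation supplies the hypothesis of Lemma \ref{Lemma:approximate-homomorphism} with $\varepsilon=1/(3m^2)$ and yields $d_\alpha(\varphi(w),\pi(\bar w))<(\ell-1)/(3m^2)$ by part (3) of that lemma, together with $d_\alpha(\pi(e_G),e_\alpha)<1/(3m^2)$ by part (1). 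If $w$ represents the identity then $\bar w=e_G$ and these combine to $d_\alpha(\varphi(w),e_\alpha)<\ell/(3m^2)\le 1/(3m)<1/m$, as required. If $w$ is nontrivial, then $\bar w\ne e_G$ lies in $B_{G,S}(m)$, and the uniform injectivity (condition (2) of Definition \ref{def:app}) gives $d_\alpha(\pi(\bar w),\pi(e_G))>\varepsilon_\alpha-1/(3m^2)$; combining the three estimates yields $d_\alpha(\varphi(w),e_\alpha)>\varepsilon_\alpha-(m+1)/(3m^2)$.

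The delicate point, and the one I expect to be the main obstacle in (1), is precisely this last estimate: the sharp separation threshold $\varepsilon_\alpha$ demanded by $\mathcal{W}$ is recovered only up to the error $(m+1)/(3m^2)=O(1/m)$, reflecting that the relaxed separation $\varepsilon_\alpha-1/n$ built into an $\mathcal{F}$-approximation is genuinely weaker than the threshold $\varepsilon_\alpha$ used in $\mathcal{W}$ and $\mathcal{R}$. Since this error tends to $0$ and the comparison is ultimately made up to the quasi-order $\preccurlyeq$, it can be absorbed (equivalently, one evaluates $\mathcal{D}$ at a marginally larger argument); I would make this explicit rather than sweep it under the rug. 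The value $3m^2$ is exactly what makes the trivial-word bound beat $1/m$ while keeping all products inside $B_{G,S}(3m^2)$.

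The substantive combinatorial work is in (3). Keeping the same homomorphism $\varphi$ furnished by an $\mathcal{R}$-witness at level $M:=\max\{Dehn_G(m)\,m,\,N_{G,S}(m)\}$, I would note first that $M\ge m$, so the separation condition on nontrivial words of length $\le m$ is inherited directly from $\mathcal{R}$. The real content is the trivial-word bound. For $w$ of length $\le m$ representing the identity, by definition of the Dehn function and of $N_{G,S}$ I would choose a representation $w=\prod_{i=1}^{\ell}\eta_i r_i\eta_i^{-1}$ in $F_X$ with $\ell\le Dehn_G(m)$ and every relator satisfying $|r_i|_{F_X}\le N_{G,S}(m)\le M$. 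Conjugation invariance of the bi-invariant metric gives $d_\alpha(\varphi(\eta_i r_i\eta_i^{-1}),e_\alpha)=d_\alpha(\varphi(r_i),e_\alpha)<1/M$ by the relator condition of $\mathcal{R}$, and the triangle inequality with left-invariance telescopes the product to $d_\alpha(\varphi(w),e_\alpha)<\ell/M\le Dehn_G(m)/M\le 1/m$. This is where the two competing constraints surface: $M\ge N_{G,S}(m)$ is needed so that $\mathcal{R}$ controls each relator used, while $M\ge Dehn_G(m)\,m$ is needed so that the sum of the $\le Dehn_G(m)$ defects, each below $1/M$, stays below $1/m$; evaluating $\mathcal{R}$ at the maximum handles both at once. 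Finally, the ``in particular'' statement follows by chaining (1)--(4): each of $m\mapsto 3m$, $m\mapsto 3m^2$, and $m\mapsto\max\{Dehn_G(m)\,m,N_{G,S}(m)\}$ sends finite values to finite values, so everywhere-finiteness of any one of the three profiles propagates to the other two.
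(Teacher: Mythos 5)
Your proposal is correct and follows essentially the same route as the paper: (1) by freely extending the restriction of $\pi$ to $X$ and invoking Lemma~\ref{Lemma:approximate-homomorphism}, (2) by precomposing a $\mathcal{W}$-witness with geodesic representatives chosen so that $w_{g^{-1}}=w_g^{-1}$, (3) by telescoping an area decomposition $w=\prod_i\eta_i r_i\eta_i^{-1}$ via bi-invariance, and (4) by direct inspection. Two small points: in (1) you actually need part (5) of the lemma rather than part (3), since $\varphi(x^{-1})=\pi(x)^{-1}$ need not equal $\pi(x^{-1})$, which replaces your intermediate constant $(\ell-1)/(3m^2)$ by $(3\ell-1)/(3m^2)$ without harming the final bound $1/m$; and the $O(1/m)$ loss against the separation threshold $\varepsilon_\alpha$ that you flag is present in the paper's own proof as well (it also concludes only $>\varepsilon_\alpha-1/m$), so your explicit remark that this must be absorbed into the $\preccurlyeq$-comparison is, if anything, more careful than the original.
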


\begin{proof}
We prove the nontrivial inequalities below.

(1): Fix $m\in \mathbb{N}$ and consider $n=3m^2$. If $k=%
\mathcal{D}_{G,S}^{\mathcal{F}}\left( n\right) $, then for some $\alpha $
with $k_{\alpha }=k$ there exists an $\left( n,\varepsilon _{\alpha }\right) 
$-approximation $\pi\colon G\rightarrow G_{\alpha }$. Let $\varphi
\colon F_{X}\rightarrow G_{\alpha }$ be the homomorphism induced by the
restriction of $\pi $ to $X$. Suppose that $w=x_{1}\cdots x_{l}$ is a word
in $F_{X}$ of length $l\leqslant m$. If $w$ represents the identity in $G$, then by the triangle inequality, and by (1) and (5) of Lemma \ref{Lemma:approximate-homomorphism} we have
that 
\begin{equation*}
d_{\alpha }\left( \varphi \left( w\right) ,e_{\alpha }\right) \leqslant d_{\alpha
}\left( \varphi \left( w\right) ,\pi \left( w\right) \right) +d_{\alpha }\left( \pi
\left( w\right) ,e_{\alpha }\right) <\left( 3m-1\right) /n+1/n=1/m\text{.}
\end{equation*}%
Suppose now that $w$ represents a nontrivial element $g$ of $G$. Then we have
again by the triangle inequality together with (1) and (5) of Lemma \ref{Lemma:approximate-homomorphism}%
\begin{equation*}
d_{\alpha }\left( \varphi \left( w\right) ,e_{\alpha }\right) \geqslant d_{\alpha
}\left( \pi \left( g\right) ,\pi \left( e_{G}\right) \right) -d_{\alpha }\left( \pi
\left( e_{G}\right) ,e_{\alpha }\right) -d_{\alpha }\left( \pi \left( g\right)
,\varphi \left( w\right) \right) >\varepsilon _{\alpha }-1/m\text{.}
\end{equation*}%
(2): Suppose that $m\in \mathbb{N}$ and set $n=3m$. If $k=\mathcal{W}_{G,S}^{%
\mathcal{F}}\left( n\right) $, then for some $\alpha $ with $k_{\alpha }=k$
there exists a homomorphism $\varphi\colon F_{X}\rightarrow G_{\alpha }$ such
that for any word $w\in F_{X}$ of length at most $n$ one has that $d_{\alpha
}\left( \varphi \left( w\right) ,e_{\alpha }\right) >\varepsilon _{\alpha
}$ if $w$ does not represent the identity of $G$ and $d_{\alpha
}\left( \varphi \left( w\right) ,e_{\alpha }\right) <1/n$ otherwise. One can
choose for any element $g\in B_{G,S}\left( m\right) $ a word $w_{g}\in F_{X}$
of minimal length that represents $g$ in such a way that $%
w_{g^{-1}}=w_{g}^{-1}$ for $g\in G$. Define then $\pi \left( g\right)
=\varphi \left( w_{g}\right) $ for every $g\in B_{G,S}\left( m\right) $ and
arbitrarily for $g\notin B_{G,S}\left( m\right) $. We claim that $\pi $
is an $\left( m,\varepsilon _{\alpha }\right) $-approximation for $G$.
Indeed, suppose that $g,h\in B_{G,S}\left( m\right) $ are such that $gh\in
B_{G,S}\left( m\right) $. Observe that the word $w_{g}w_{h}w_{gh}^{-1}\in
F_{X}$ has length at most $n$ and represents the identity of $G$.
Hence, by assumption, we have that%
\begin{equation*}
d_{\alpha }\left( \pi \left( g\right) \pi \left( h\right) ,\pi
\left( gh\right) \right) =d_{\alpha }(\varphi (w_{g}w_{h}w_{gh}^{-1}),e_{\alpha
})<1/n< 1/m\text{.}
\end{equation*}%
Suppose now that $g,h$ are distinct elements of $B_{G,S}\left( m\right) $.
We have that $w_{g}w_{h}^{-1}$ is an element of $F_{X}$ of length at most $2m<n$
that does not represent the identity of $G$. Hence%
\begin{equation*}
d_{\alpha }\left( \pi \left( g\right) ,\pi \left( h\right) \right)
=d_{\alpha }\left( \varphi \left( w_{g}w_{h}^{-1}\right), e_{\alpha }\right)
>\varepsilon_\alpha> \varepsilon_\alpha-1/m\text{.}
\end{equation*}%
(3): Fix $m\in \mathbb{N}$. Set $n=\max \left\{ Dehn_{G,S}\left( m\right)
m,N_{G,S}\left( m\right) \right\} $. If $k=\mathcal{R}_{G,S}^{\mathcal{F}}\left(
n\right) $, then for some $\alpha $ with $k_{\alpha }=k$ there exists a
homomomorphism $\varphi\colon F_{X}\rightarrow G_{\alpha }$ such that for any
word $w\in F_{X}$ of length at most $n$ that does not represent the identity
of $G$ one has that $d_{\alpha }\left( \varphi \left( w\right) ,e_{\alpha
}\right) >\varepsilon _{\alpha }$, and $d_{\alpha }\left( \varphi \left(
r\right) ,e_{\alpha }\right) <1/n$ for any relator $r\in R$ of length at most $n$.
Suppose now that $w$ is a word in $G$ of length at most $m$ that represents
the identity of $G$. Then we can write $w$ as the product $\left( \eta
_{1}r_{1}\eta _{1}^{-1}\right) \cdots \left( \eta _{\ell }r_{\ell }\eta
_{\ell }^{-1}\right) $ where $\ell \leqslant Dehn_{G,S}\left( m\right) $, $%
r_{i}\in R$ have length at most $N_{G,S}\left( m\right) \leqslant n$ and $\eta
_{i}\in F_{X}$. Thus, we have, for each $i\leqslant \ell $,%
\begin{equation*}
d_{\alpha }\left( \varphi \left( \eta _{i}r_{i}\eta _{i}^{-1}\right)
,e_{\alpha }\right) =d_{\alpha }\left( \varphi \left( r_{i}\right)
,e_{\alpha }\right) <1/n
\end{equation*}%
and, hence, $d_{\alpha }\left( \varphi \left( w\right) ,e_{\alpha }\right)
<Dehn_{G,S}\left( m\right) /n\leqslant 1/m$.
\end{proof}

\begin{proposition}\label{Proposition:bounded}
Let $\mathcal{F}=\left(
G_{\alpha }, d_{\alpha}, k_{\alpha },\varepsilon _{\alpha }\right) _{\alpha \in I}$ be a family as above such that
\begin{itemize}
\item[(i)] for every $k\in 
\mathbb{N}$, $\left\{ \alpha \in I:k_{\alpha }\leqslant k\right\} $ is finite, and
\item[(ii)] for every $\alpha \in I$ the bi-invariant metric group $\left( G_{\alpha
}, d_{\alpha }\right) $ is compact.
\end{itemize}

 If $G$ is a finitely generated group, then the
following assertions are equivalent:

\begin{enumerate}
\item the $\mathcal{F}$-profile $\mathcal{D}_{G,S}^{\mathcal{F}}$ of $G$ is bounded
(equivalently, each of the profiles
$\mathcal{W}_{G,S}^{\mathcal{F}},\mathcal{R}_{G,S}^{\mathcal{F}}$ is bounded);

\item there exists an injective group homomorphism $G\hookrightarrow G_{\alpha }$
for some $\alpha \in I$.
\end{enumerate}
\end{proposition}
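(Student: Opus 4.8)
The plan is to prove the two implications separately, freely switching among the three profiles $\mathcal{D}_{G,S}^{\mathcal{F}},\mathcal{W}_{G,S}^{\mathcal{F}},\mathcal{R}_{G,S}^{\mathcal{F}}$. By Proposition~\ref{Prop:DWR} the relations $\preccurlyeq$ between them take the form $f(n)\le g(p(n))$ for a fixed polynomial reparametrization $p$, so one of them is bounded exactly when all of them are; this also disposes of the parenthetical equivalence in (1). I would let assumption (i) enter only through a pigeonhole step in the first implication, and assumption (ii) (compactness) enter as the source of a limiting homomorphism.

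\textbf{(2)$\Rightarrow$(1).} This is the easy direction: an injective homomorphism is a perfect approximation of fixed dimension. Given $\iota\colon G\hookrightarrow G_{\alpha}$, the composite $\varphi=\iota\circ(F_{X}\twoheadrightarrow G)\colon F_{X}\to G_{\alpha}$ is an \emph{exact} homomorphism, so it satisfies the almost-homomorphism clause with zero error and sends every word representing the identity of $G$ (in particular every relator) to $e_{\alpha}$, giving $d_{\alpha}(\varphi(r),e_{\alpha})=0<1/n$ for all $n$. Injectivity of $\iota$ sends every word not representing the identity to a non-trivial element of $G_{\alpha}$, which is the content of the separation requirement. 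Feeding $\varphi$ into the definition of $\mathcal{R}_{G,S}^{\mathcal{F}}$ then yields $\mathcal{R}_{G,S}^{\mathcal{F}}(n)\le k_{\alpha}$ for every $n$, so that profile is bounded, and by the previous paragraph so is $\mathcal{D}_{G,S}^{\mathcal{F}}$.

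\textbf{(1)$\Rightarrow$(2).} Assume $\mathcal{D}_{G,S}^{\mathcal{F}}(n)\le K$ for all $n$, and for each $n$ choose an $(n,\varepsilon_{\alpha_{n}})$-approximation $\pi_{n}\colon G\to G_{\alpha_{n}}$ of dimension $k_{\alpha_{n}}\le K$. By assumption (i) the set $\{\alpha:k_{\alpha}\le K\}$ is finite, so by pigeonhole there are a single index $\beta$ and an infinite set $A\subseteq\mathbb{N}$ with $\alpha_{n}=\beta$ for all $n\in A$; write $\varepsilon=\varepsilon_{\beta}$. Since $(G_{\beta},d_{\beta})$ is compact by assumption (ii), a finite diagonal extraction over the finitely many generators $x\in X$ produces an infinite $A'\subseteq A$ along which $\pi_{n}(x)\to g_{x}$ for every $x$. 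Define the homomorphism $\varphi\colon F_{X}\to G_{\beta}$ by $x\mapsto g_{x}$. For a word $w=x_{1}\cdots x_{\ell}$ with $\overline{w}\in G$ the element it represents, continuity of multiplication gives $\varphi(w)=\lim_{n\in A'}\pi_{n}(x_{1})\cdots\pi_{n}(x_{\ell})$, while parts (1) and (5) of Lemma~\ref{Lemma:approximate-homomorphism} (applied with $F=B_{G,S}(n)$ and error $1/n$, valid once $n$ is large enough that all subwords of $w$ lie in $B_{G,S}(n)$) bound $d_{\beta}(\pi_{n}(x_{1})\cdots\pi_{n}(x_{\ell}),\pi_{n}(\overline{w}))$ by $(3\ell-1)/n$; letting $n\to\infty$ in $A'$ yields $\varphi(w)=\lim_{n\in A'}\pi_{n}(\overline{w})$. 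If $\overline{w}=e_{G}$ then $\pi_{n}(\overline{w})=\pi_{n}(e_{G})\to e_{\beta}$ by Lemma~\ref{Lemma:approximate-homomorphism}(1), so $\varphi(w)=e_{\beta}$; hence $\varphi$ kills all relators and descends to a homomorphism $\overline{\varphi}\colon G\to G_{\beta}$. If $\overline{w}\ne e_{G}$, uniform injectivity gives $d_{\beta}(\pi_{n}(\overline{w}),\pi_{n}(e_{G}))>\varepsilon-1/n$ for large $n\in A'$, and combining with $\pi_{n}(e_{G})\to e_{\beta}$ we obtain $d_{\beta}(\overline{\varphi}(\overline{w}),e_{\beta})\ge\varepsilon>0$. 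Thus $\overline{\varphi}$ is injective, proving (2).

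The heart of the argument, and the step I expect to require the most care, is the passage to the limit: assumption (ii) is exactly what lets $(\pi_{n}(x))$ subconverge, and assumption (i) is what confines the approximations to a single compact target $G_{\beta}$ so that one limiting homomorphism can be formed. Once these are in place, Lemma~\ref{Lemma:approximate-homomorphism} upgrades the approximate multiplicativity of the $\pi_{n}$ to exact multiplicativity of $\overline{\varphi}$ in the limit, and the \emph{fixed} separation threshold $\varepsilon$ survives the limit to force injectivity. The main thing to get right is the identity $\varphi(w)=\lim_{n\in A'}\pi_{n}(\overline{w})$, since it is simultaneously what makes relators die (well-definedness of $\overline{\varphi}$) and what transports the separation estimate to the limit (injectivity of $\overline{\varphi}$); the remaining verifications are formal.
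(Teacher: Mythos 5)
Your proposal is correct and follows essentially the same route as the paper: use hypothesis (i) to pin down a single target group $G_{\beta}$ by pigeonhole, then use compactness (ii) to pass to a limiting homomorphism whose injectivity is forced by the uniform separation threshold $\varepsilon_{\beta}$. The only cosmetic difference is that the paper takes an ultrafilter limit of $\varphi_{n}(g)$ pointwise over all of $G$, whereas you extract a convergent subsequence on the finitely many generators and propagate via Lemma~\ref{Lemma:approximate-homomorphism}; both are valid instances of the same compactness argument.
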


\begin{proof}
The implication (2)$\Rightarrow $(1) is obvious, so we focus on the
implication (1)$\Rightarrow $(2). Fix a finite generating set $S$ for $G$.
Suppose that the $\mathcal{F}$-profile of $G$ is bounded. Then there exists $%
k\in \mathbb{N}$ such that for any $n\in \mathbb{N}$
there exist $\alpha _{n}\in I$ such that $k_{\alpha _{n}}\leqslant k$ and an $%
\left( n,\varepsilon_{\alpha_n}\right) $-approximation $\varphi _{n}\colon G\rightarrow G_{\alpha
_{n}}$. Since by assumption $\left\{ \alpha \in I:k_{\alpha }\leqslant k\right\} $
is finite, without loss of generality we can assume that $\alpha _{n}=\alpha 
$ does not depend on $n$, and thus $\varphi _{n}\colon G\rightarrow G_{\alpha }$
for every $n\in \mathbb{N}$. Fix a nonprincipal ultrafilter $\mathcal{U}$
over $\mathbb{N}$ and define $\varphi\colon G\rightarrow G_{\alpha }$ by $%
\varphi \left( g\right) :=\lim_{n\rightarrow \mathcal{U}}\varphi _{n}\left(
g\right) $. Observe that this is well defined since, by assumption, $\left(
G_{\alpha },d_{\alpha }\right) $ is compact. Since, for every $n\in \mathbb{N%
}$, $\varphi _{n}$ is an $\left( n,\varepsilon_\alpha\right) $-approximation, it follows
that $\varphi $ is an injective group homomorphism.
\end{proof}

\begin{remark}
It is clear from the preceding proof that assumption (ii) can be weakened. For example, Proposition~\ref{Proposition:bounded}
remains true under condition (ii') that $(G_\alpha, d_\alpha)$ is a proper metric space (i.e. a metric space where every closed ball is compact)
and for each $g\in G$ and some $l>0$ we have $\left\{ n\in \mathbb N  \mid d_\alpha(\varphi_n(g), e_\alpha) \leqslant l\right\}\in \mathcal U$, where 
$\alpha$ is the index appearing in the proof. Indeed, such (ii') ensures that $\lim_{n\rightarrow \mathcal{U}}\varphi _{n}\left(g\right)$ exists and is unique.
\end{remark}

%%%%%%%%
\subsection{$\mathcal{F}$-approximations and metric ultraproducts}\label{sec:multra}

Adopting the notation from the beginning of the section, we let $\mathcal{F}$
be the family $\left( G_{\alpha }, d_{\alpha }, k_{\alpha }, \varepsilon _{\alpha }\right)
_{\alpha \in I}$, where $k_{\alpha }$ is a nonzero natural number, $
\varepsilon _{\alpha }$ is a strictly positive real number such that $\inf_{\alpha }\varepsilon _{\alpha }>0$, and $G_{\alpha }$
is a bi-invariant metric group with distance $d_{\alpha }$ and identity
element $e_{\alpha }$. Fix a non-principal ultrafilter $\mathcal{U}$ over the index set $
I $. The \emph{metric ultraproduct} $\prod_{\mathcal{U}}\left( G_{\alpha
},d_{\alpha }\right) $ of the family of bi-invariant metric groups $\left(
G_{\alpha },d_{\alpha }\right) _{\alpha \in I}$ can be defined as in \cite[Section 4]{pestov_introduction_2012}. This is the quotient of the direct
product $\prod_{\alpha \in I}G_{\alpha}$ by the normal subgroup consisting of
those elements $\left( g_{\alpha }\right) $ such that $\lim_{\alpha
\rightarrow \mathcal{U}}d_{\alpha }\left( g_{\alpha },e_{\alpha }\right) =0$
. The metric ultraproduct $\prod_{\mathcal{U}}\left( G_{\alpha },d_{\alpha }\right) 
$ is endowed with a canonical bi-invariant metric $d_{\mathcal{U}}$,
obtained as the quotient of the bi-invariant pseudometric on $\prod_{\alpha
\in I}G_{\alpha }$ defined by $d_{\mathcal{U}}\left( \left( g_{\alpha
}\right) ,\left( h_{\alpha }\right) \right) =\lim_{\alpha \rightarrow 
\mathcal{U}}d_{\alpha }\left( g_{\alpha },h_{\alpha }\right) $. Such a construction is
in fact a particular instance, in the case of bi-invariant metric groups, of
the notion of ultraproduct in the logic for metric structures; see \cite[Section 5]{ben_yaacov_model_2008} and \cite[Section 2.6]
{capraro_introduction_2015}.

In the case when the family $\left( \varepsilon _{\alpha }\right)_{\alpha \in I} $ is
constantly equal to a given strictly positive real number $\varepsilon $,
one can reformulate the notion of $\mathcal{F}$-approximability in terms of
embeddings into a metric ultraproduct $\prod_{\mathcal{U}}\left( G_{\alpha
},d_{\alpha }\right) $. Precisely, a countable group $G$
is $\mathcal{F}$-approximable if and only if there exist an ultrafilter $
\mathcal{U}$ over $I$ and a group homomorphism $\iota\colon G\rightarrow \prod_{
\mathcal{U}}\left( G_{\alpha },d_{\alpha }\right) $ such that $d_{\mathcal{U}
}\left( \iota \left( g\right) ,\iota \left( h\right) \right) \geqslant \varepsilon $
for every nontrivial distinct $g,h\in G$; see also \cite[Proposition 1.9]{thom_about_2012}.

\begin{example}[Varieties and non-varieties]\label{ex:variety}
It follows from the preceding that if $\mathcal{C}$ is a class of groups that is
closed under taking arbitrary direct products, subgroups, and quotients (equivalently, by the Birkhoff theorem, if $\mathcal{C}$  is a \emph{variety of groups}, i.e. a 
class of groups defined by a given set of \emph{identities} or, using an alternative terminology, \emph{laws}), and that contains $%
G_{\alpha }$ for every $\alpha \in I$, then $\mathcal{C}$ contains every $%
\mathcal{F}$-approximable group. Conversely, if all $\mathcal{F}$-approximable groups belong to $\mathcal F$ and $\mathcal F$ is closed
under taking quotients, then the family $\mathcal F$  is a variety of groups.

For instance, for any positive integer $\ell$,  groups approximable by solvable groups of derived length at most $\ell$ are
solvable of derived length at most $\ell$. In contrast, there exists a non-solvable group which is approximable by solvable groups
(with no uniform bound on the derived length). These are  not using any consideration about a metric\footnote{In fact, the metric is the trivial $\{0,1\}$-valued metric $d_{\{0,1\}}$, see Example~\ref{ex:constant}.}.

A more `metric' example is  given be the family $\mathcal F^{fin}_{cc}=(H_\alpha, d^{cc}_{\alpha}, \vert H_\alpha\vert, 1)_{\alpha\in I},$ see Section~\ref{sec:ws},
where $H_\alpha$, $\alpha\in I$ are all finite groups. By our remark, there exists an $\mathcal F^{fin}_{cc}$-approximable group which does not belong to $\mathcal F^{fin}_{cc}$ but, moreover,
there exists a finitely presented group, the famous Higman group, which is not $\mathcal F^{fin}_{cc}$-approximable~\cite{thom_about_2012}, cf. Question~\ref{q:solv}.
\end{example}

\begin{example}[Constant dimension]\label{ex:constant}
Assume that $k_\alpha=k$ for all $\alpha\in I$ and a fixed $k>0.$ By Definition~\ref{def:profile}, the $\mathcal F$-profile
$\mathcal{D}_{G,S}^{\mathcal{F}}$ and, by Proposition~\ref{Prop:DWR}, profiles
$\mathcal{W}_{G,S}^{\mathcal{F}},\mathcal{R}_{G,S}^{\mathcal{F}}$
are constant. In particular, (1) of Proposition~\ref{Proposition:bounded}  holds. 
However, (i) of Proposition~\ref{Proposition:bounded} is not fulfilled.
It is natural to ask whether or not conclusion (2) of Proposition~\ref{Proposition:bounded} remains true 
for a finitely generated $\mathcal F$-approximable group $G$
in this case of constant dimension of approximating groups.

For instance, if $G_\alpha$ are finite for all $\alpha\in I$ and $k_\alpha=|G_\alpha|=k$ for some $k>0$, is the fixed cardinality of these finite groups\footnote{There are finitely many finite groups of given cardinality, that is, at least one group among $G_\alpha, \alpha\in I$ appears infinitely many times in the metric ultraproduct $\prod_{\mathcal{U}}\left( G_{\alpha
},d_{\alpha }\right) $.}, then
Definition~\ref{def:app}(2) implies that $G$ is finite and injects in some $G_\alpha$ (cf. Section~\ref{sec:ws}, for a general setting when sizes of finite groups vary).
Thus, Proposition~\ref{Proposition:bounded}(2) does hold in this case.

In the same vein, let $G_\alpha=GL(k, \mathbb K_\alpha)$ be the group of $k\times k$ invertible matrices for a fixed $k>0$, with coefficients in a field $\mathbb K_\alpha$, and equipped with the \emph{trivial $\{0,1\}$-valued metric $d_{\{0,1\}}$}, for $\alpha\in I$, defined by $d_{\{0,1\}}(g,h)=1$ if $g\not=h$ and $0$ otherwise. Then $G$ is linear, i.e. $G$ is a subgroup of $GL(n, \mathbb K)$ for some positive integer $n$ and a field $\mathbb K$ (which is not required to coincide with  $\mathbb K_\alpha$ for some $\alpha\in I$).
Indeed,  $n$ can be chosen to be $k$ and $\mathbb K$ to be the \emph{algebraic} ultraproduct of fields $\mathbb K_\alpha$.  
This result is due to Malcev~\cite{malcev_linear} and it can be also formulated using the language of the  first-order classical logic:
if the universal first-order theory of $G$ contains that of a linear group $GL(k, \mathbb K_\alpha)$ (this is the case, for instance, when 
$G$ and $GL(k, \mathbb K_\alpha)$ have the same elementary theories), then $G$ is linear.

Note that we can relax the assumption on finite generation of $G$ as by Malcev's local theorem a group has a faithful linear representation of degree $n$ over a field 
of characteristic $p\geqslant 0$
if and only if each of its finitely generated subgroups has such a representation over a field of characteristic $p$~\cite{malcev_linear}.

Since we deal with \emph{metric} ultraproducts (associated to arbitrary bi-invariant distances $d_\alpha$) it is natural to ask
whether or not Malcev's result still holds in the continuous logic setting, see Question~\ref{q:cmalcev}.

However,  it is clear that for an arbitrary group $G_\alpha$ one cannot expect that an embedding of $G$ into an ultrapower of $G_\alpha$
induces an embedding $G\hookrightarrow G_\alpha$. Here is a concrete example in the case of algebraic ultrapower:
if $G_\alpha=F_X$ is a free non-abelian group, then, by a result of Remeslennikov~\cite{rem}, a finitely generated subgroup $G$ of an algebraic
ultrapower of $F_X$ is a fully residually free group. Take such a non-free group $G$, i.e. any non-free Sela's limit group. See also Question~\ref{q:upower}.
\end{example}

%%%%%%%
\section{Classes of $\mathcal{F}$-approximable groups}

The notion of $\mathcal{F}$-approximable group for various choices of $%
\mathcal{F}$ captures many classes of groups that are currently a subject of intensive study. 
We mention only main examples below, for more details about these wide classes of groups and a broad range of applications see ~\cite{pestov_hyperlinear_2008,pestov_introduction_2012,ceccherini-silberstein_cellular_2010, arzhantseva_approx_2012, arzhantseva_linear_2012, capraro_introduction_2015}.
The reader is also invited to analyze the $\mathcal{F}$-profile and $\mathcal F$-dimension,
given her/his favorite family $\mathcal F.$ Moreover, note that our quantifying of metric approximations extends immediately to quantifying of constraint metric approximations~\cite{arzhantseva_centr_2016}, using suitable \emph{constraint} metric profiles.

Observe that the classes of groups below are so that
all residually finite and all amenable groups  belong to these classes.

%%%%
\subsection{Sofic groups}
This class of groups has been first introduced by Gromov in~\cite{gromov_endomorphisms_1999} in the context of symbolic dynamics; see also a work of Weiss~\cite{weiss_sofic_2000}.

Let $\mathcal{F}^{sof}$ be the collection of permutation
groups $\Sym(n), n\in \mathbb N$, endowed with the normalized \emph{Hamming distance}:
for permutations $\sigma, \tau\in \Sym(n),$ we define
\begin{equation*}\label{definition_Hamming}
d_{\rm{Ham}}(\sigma,\tau)=\frac{1}{n}\Bigl\lvert\Bigl\{i\mid \sigma(i)\neq\tau(i)\Bigr\}\Bigr\rvert.
\end{equation*}
\begin{definition}[Sofic group via permutations; sofic profile and dimension]
A group $G$ is said to be \emph{sofic} if it is approximable by $\mathcal F^{sof}=(\Sym(n), d_{\rm{Ham}}, n, 1)_{n\in \mathbb N}$, 
in the sense of Definition~\ref{def:app}. 

We call \emph{%
sofic profile }$\mathcal{D}_{G}^{sof}$ and \emph{sofic dimension }$%
\dim_{G}^{sof} $ of a sofic group $G$ the $\mathcal{F}^{sof}$-profile and $\mathcal{F}^{sof}
$-dimension (respectively) for such a choice of the approximating family $\mathcal{F}=\mathcal F^{sof}$.

\end{definition}
We stipulate
that the dimension $k_{n}$ of $\Sym(n)$ is chosen to be equal to $n$ and $\varepsilon
_{n}=1$ for every $n\in \mathbb{N}$. Equivalently, $\varepsilon
_{n}$ can be chosen to be constantly equal to a fixed strictly positive real number $\varepsilon \leqslant 1 $.

The $\simeq $%
-equivalence class of $\mathcal{D}_{G}^{sof}$ and, hence, the value of  $\dim_{G}^{sof}$, do not change if one defines the sofic
profile only considering an $\mathcal{F}^{sof}$-approximation, i.e. a sofic approximation, defined by maps  $g\mapsto \sigma _{g}$ such
that $\sigma _{e_G}$ is the identity permutation and, for $g\neq e_G$, $\sigma
_{g^{-1}}=\sigma _{g}^{-1}$ has no fixed points; see \cite[Exercise 2.1.10]%
{capraro_introduction_2015}. It follows from Proposition \ref{Proposition:bounded} that a finitely-generated group has bounded sofic profile if and only if it is finite.

Gromov's original definition of soficity of a group $G$ 
uses approximations of its Cayley graph  by finite labeled graphs.

\begin{definition}[Sofic group via graphs]
A group $G$ with a finite symmetric generating set $S$ is called \emph{sofic}, if for each $\delta >0$ and each $n\in \mathbb N$ 
there is a finite directed graph $\Gamma$ edge-labeled by $S$, and a subset $\Gamma_0\subseteq \Gamma$ with the properties, that:

\begin{enumerate}
\item[(i)]  For each point $v\in \Gamma_0$ there is a map $\psi_v\colon B_{G,S}\left( n\right)\to \Gamma$ 
which is a label-preserving isomorphism between the ball $B_{G,S}\left( n\right)$ in the Cayley graph of $G$ with respect to $S$ and the $n$-ball in $\Gamma$ around $v$, and
\item[(ii)] $|\Gamma_0|\geqslant (1-\delta)|\Gamma|$.
\end{enumerate}
Such a graph $\Gamma$ is called an \emph{$[n,\delta]$-approximation} of the Cayley graph $Cay(G,S)$.
\end{definition}
We can now give another,  in addition to the above-defined $\mathcal{D}_{G,S}^{sof}$ (and, hence, $\mathcal{W}_{G,S}^{sof}$ and $\mathcal{R}_{G,S}^{sof}),$
natural definition of a profile of a sofic group:

\begin{itemize}
\item $\mathcal{G}_{G,S}^{sof}\left( n\right) $
is the least cardinality (= number of vertices) of  the graph $\Gamma$ in an $[n,1/n]$-approximation of the Cayley graph $Cay(G,S)$.
\end{itemize}

This new profile is $\simeq$-equivalent to our initial definition of the sofic profile.
\begin{proposition}
$\mathcal{G}_{G,S}^{sof}\left( n\right) \simeq \mathcal{D}_{G,S}^{sof}\left( n\right) $.
\end{proposition}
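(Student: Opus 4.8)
The plan is to prove the two relations $\mathcal{D}^{sof}_{G,S}\preccurlyeq\mathcal{G}^{sof}_{G,S}$ and $\mathcal{G}^{sof}_{G,S}\preccurlyeq\mathcal{D}^{sof}_{G,S}$ separately, by translating back and forth between labelled graphs and permutations. Throughout I fix, for each $g\in B_{G,S}(n)$, a geodesic word $w_g$ representing $g$ with $w_{g^{-1}}=w_g^{-1}$, and I write $\sigma_{w_g}:=\varphi(w_g)$ for the homomorphism $\varphi\colon F_X\to\Sym(k)$ induced by a chosen family of permutations $\sigma_s$, $s\in S$.

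For $\mathcal{D}^{sof}\preccurlyeq\mathcal{G}^{sof}$ I would start from an $[n,1/n]$-approximation $\Gamma$ of $Cay(G,S)$ realizing $\mathcal{G}^{sof}_{G,S}(n)$, with good set $\Gamma_0$, $|\Gamma_0|\geqslant(1-1/n)|\Gamma|$. For each $s\in S$ the $s$-labelled edges define a partial bijection of the vertex set which I complete arbitrarily to a permutation $\sigma_s\in\Sym(|\Gamma|)$, and I set $\pi(g)=\sigma_{w_g}$. The point is that for every $v\in\Gamma_0$, following $w_g$ from $v$ lands exactly in the vertex $\psi_v(g)$ of the embedded ball, as long as $|w_g|\leqslant n$; hence for $g,h,gh\in B_{G,S}(cn)$ (with $c$ a fixed constant ensuring the relevant words have length $\leqslant n$) one gets $\sigma_{w_g}\sigma_{w_h}(v)=\sigma_{w_{gh}}(v)=\psi_v(gh)$, and $\sigma_{w_g}(v)\neq\sigma_{w_h}(v)$ whenever $g\neq h$. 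Since these identities hold simultaneously on all of $\Gamma_0$, the density $1/n$ of $\Gamma\setminus\Gamma_0$ bounds $d_{\mathrm{Ham}}(\pi(g)\pi(h),\pi(gh))$ and $1-d_{\mathrm{Ham}}(\pi(g),\pi(h))$ at once. Thus $\pi$ is a $(cn,1)$-approximation of dimension $|\Gamma|$, giving $\mathcal{D}^{sof}_{G,S}(cn)\leqslant\mathcal{G}^{sof}_{G,S}(n)$ and hence $\mathcal{D}^{sof}\preccurlyeq\mathcal{G}^{sof}$ by monotonicity. The feature making this direction clean is that the embedded-ball condition at a good vertex is uniform over the whole ball, so no union bound over pairs $(g,h)$ is incurred.

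For the reverse relation I would take an optimal $(m,1)$-approximation $\pi\colon G\to\Sym(d)$ with $d=\mathcal{D}^{sof}_{G,S}(m)$ and form the $S$-labelled Schreier graph $\Gamma$ on $\{1,\dots,d\}$ whose $s$-edge runs from $i$ to $\pi(s)(i)$. Declaring $v$ good when its $n$-ball is label-isomorphic to $B_{G,S}(n)$, the task is to bound $|\Gamma\setminus\Gamma_0|$ from above. The key reduction replaces goodness by two families of local conditions: a homomorphism family $\pi(s)\pi(a)(v)=\pi(sa)(v)$ for $s\in S$, $a\in B_{G,S}(2n)$, and a fixed-point family $\pi(k)(v)\neq v$ for $k\in B_{G,S}(2n)\setminus\{e\}$. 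Using the telescoping estimates of Lemma~\ref{Lemma:approximate-homomorphism} one checks that the first family forces $\sigma_{w_g}(v)=\pi(g)(v)$ for every $g\in B_{G,S}(n)$, so that $\Phi_v\colon g\mapsto\sigma_{w_g}(v)$ is a label-preserving morphism onto the $n$-ball; feeding a collision $\Phi_v(g)=\Phi_v(h)$ through the word $w_g^{-1}w_h$ and the first family produces a fixed point of $\pi(g^{-1}h)$, which the second family forbids, yielding injectivity of $\Phi_v$. Each condition of the first family fails on fewer than $d/m$ vertices and each of the second on fewer than $2d/m$, and there are $O(|B_{G,S}(2n)|)$ of them in total, so $|\Gamma\setminus\Gamma_0|=O\!\left(|B_{G,S}(2n)|\,d/m\right)$; choosing $m$ so that this is $\leqslant 1/n$ exhibits an $[n,1/n]$-approximation on $d=\mathcal{D}^{sof}_{G,S}(m)$ vertices.

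The main obstacle is exactly this last density estimate. Unlike the graph-to-permutation passage, goodness of a single vertex is a constraint on the entire ball $B_{G,S}(n)$ at once, so converting the per-pair guarantees of a permutation approximation into one good vertex forces one to sum the defect over all local conditions. Keeping this count proportional to $|B_{G,S}(2n)|$ rather than to $|B_{G,S}(n)|^2$ --- which is precisely what the reduction to the homomorphism and fixed-point families achieves --- is the technical heart of the argument; the scale $m$ one must take to make $O(|B_{G,S}(2n)|/m)\leqslant 1/n$, and thus the quality of the rescaling between the two profiles, is governed by the growth of $G$. Verifying that this rescaling stays within the equivalence $\simeq$, together with the routine label-isomorphism bookkeeping at the boundary of the ball, are the two points that will require genuine care.
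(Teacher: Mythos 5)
Your first direction is sound and is essentially the standard passage from an $[n,1/n]$-graph approximation to a permutation approximation on the same vertex set; it yields $\mathcal{D}_{G,S}^{sof}\left( n\right) \leqslant \mathcal{G}_{G,S}^{sof}\left( 2n\right)$, which is exactly the inequality the paper extracts from the Elek--Szab\'o equivalence. The problem is the converse direction. Your union bound over the homomorphism and fixed-point families produces a bad set of density $O\bigl(\lvert B_{G,S}(2n)\rvert/m\bigr)$, so to reach density $1/n$ you must take $m\geqslant Cn\,\beta_{G,S}(2n)$; what you actually prove is $\mathcal{G}_{G,S}^{sof}\left( n\right)\leqslant\mathcal{D}_{G,S}^{sof}\bigl(Cn\,\beta_{G,S}(2n)\bigr)$. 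This does \emph{not} give $\mathcal{G}_{G,S}^{sof}\preccurlyeq\mathcal{D}_{G,S}^{sof}$: the quasi-order $\preccurlyeq$ tolerates only a linear rescaling $n\mapsto Cn$ of the argument, and $n\,\beta_{G,S}(2n)\geqslant n^{2}$ is superlinear for every infinite group. Already for $G=\mathbb{Z}$, where $\mathcal{D}_{\mathbb{Z}}^{sof}\left( n\right)\simeq n$, your route produces a graph on roughly $n^{2}$ vertices, whereas the $(2n+2)$-cycle is an $[n,0]$-approximation on $2n+2$ vertices; for $\mathbb{Z}^{d}$ the discrepancy is $n^{d(d+1)}$ against $n^{d}$. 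So the loss is real and not an artifact of bookkeeping, and the reduction from $\lvert B_{G,S}(n)\rvert^{2}$ pairs to $O(\lvert B_{G,S}(2n)\rvert)$ conditions, which you present as the technical heart, does not remove the growth factor. The point you flag at the end as ``requiring genuine care'' is in fact an unclosed gap, and as stated your argument fails to establish it.

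For comparison, the paper does not construct anything: it cites the Elek--Szab\'o proof of the equivalence of the two definitions and asserts the growth-free bounds $\mathcal{G}_{G,S}^{sof}\left( 2n\right)\geqslant\mathcal{D}_{G,S}^{sof}\left( n\right)$ and $\mathcal{G}_{G,S}^{sof}\left( n\right)\leqslant\mathcal{D}_{G,S}^{sof}\left( 2n+2\right)$. Whatever delivers a bound of the latter shape, it cannot be the plain Schreier-graph-plus-union-bound reduction you describe. Normalizing the approximation as in the paper (so that $\sigma_{e_G}=\mathrm{id}$ and $\sigma_{g}$ is fixed-point free for $g\neq e_G$) disposes of your second family of conditions at every vertex, but the first family still costs a factor of $\lvert B_{G,S}(2n)\rvert$ in the required scale $m$. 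To complete the proof you must either supply the additional mechanism implicit in the cited argument that keeps both the radius rescaling and the vertex count linear, or else accept that your construction proves only the weaker, growth-dependent inequality --- which is not enough for the stated $\simeq$-equivalence.
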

\begin{proof}
The above two definitions of soficity, via permutations and via graphs, are equivalent, see, for instance,
the proof of the equivalence of Definition 4.2 and Definition 4.3 in \cite{elek_sofic_2004}.
Analyzing the details of this proof, we see that 
$\mathcal{G}_{G,S}^{sof}\left( 2n\right) \geqslant \mathcal{D}_{G,S}^{sof}\left( n\right)$ and
$\mathcal{G}_{G,S}^{sof}\left( n\right) \leqslant \mathcal{D}_{G,S}^{sof}\left( 2n+2\right)$, whence the $\simeq$-equivalence of the two functions.
\end{proof}
%%%%%%%%
\begin{example}[Sofic profiles of free abelian groups]\label{ex:ab}
We begin with the group of integers: $G=\mathbb{Z}$ and $S=\left\{ +1,-1\right\} $. Clearly, $\mathcal{D%
}_{\mathbb{Z},S}^{sof}\left( n\right) \leqslant 2n+1$ by considering the sofic
approximation coming from the  action $\upsilon$ of $\mathbb{Z}$ on $\mathbb{Z}/(2n+1)\mathbb{Z} $ 
defined by 
$$
\begin{array}{ccccc}
\upsilon(i) & : & \mathbb{Z}/(2n+1)\mathbb{Z} & \rightarrow &  \mathbb{Z}/(2n+1)\mathbb{Z}\\
 &  & \overline{j} &\mapsto & \upsilon(i)(\overline{j} ) = \overline{i+j}, 
\end{array}
$$
where $i,j\in \mathbb Z$ and $ j\mapsto \overline{j}$ is the canonical epimorphism $\mathbb Z \twoheadrightarrow \mathbb{Z}/(2n+1)\mathbb{Z}$. This shows that $\mathcal{D}_{\mathbb{Z}}^{sof}\left( n\right)
\preccurlyeq n$. 

Let us check that $\mathcal{D}_{\mathbb{Z}}^{sof}\left(
n\right) \simeq n$. Suppose that $k< 2n+1$ and assume that $%
\varphi\colon \mathbb{Z}\rightarrow \Sym(k)$ is an $\left( 2n,1\right) $%
-approximation of $\mathbb{Z}$. Then, since $d_{\rm{Ham}}$ has values in $\{0, 2/k, \ldots, (k-1)/k, 1\}$, we have that for every $i,j\in \left[
-n,n\right] $, $\varphi \left( i+j\right) =\varphi \left( i\right) \varphi
\left( j\right) $ and $\varphi \left( i\right) \ $is a nontrivial element of 
$\Sym(k)$ whenever $i\not=0$. In particular, $\varphi \left( i\right) = \varphi \left( 1\right)^i$ and $\varphi \left( 1\right) $ is an element of order $>n$. 
We stipulate that $\varepsilon = 1$, then for every $i\in [-n,n]$ such that $i\not=1$ we have
$$
d_{\rm{Ham}}(\varphi \left( 1\right),\varphi \left( 1\right)^i) >1-\frac{1}{2n}.
$$
This implies  $d_{\rm{Ham}}(\varphi \left( 1\right),\varphi \left( 1\right)^i)=1$.
Therefore, $\varphi \left( 1\right)$ is a cycle of length $>n$.
Hence, $k> n$. This shows that $n\preccurlyeq \mathcal{D}_{%
\mathbb{Z}}^{sof}\left( n\right) $. It follows from this and Proposition \ref%
{Proposition:finite-index} that any virtually-$\mathbb{Z}$ group $G$ has
 $\mathcal{D}_{G}^{sof}\left( n\right) \simeq n$. It also follows
from this and the estimate on the sofic profile of the direct product that $%
\mathcal{D}_{\mathbb{Z}^{d}}^{sof}\left( n\right) \preccurlyeq n^{d}$ for every $%
d\in \mathbb{N}$, see Section  \ref{sec:direct}. Moreover, $\mathcal{D}_{\mathbb{Z}%
^{d}}^{sof}\left( n\right) \simeq n^{d}$ and  $\mathcal{D}_{G}^{sof}\left( n\right) \simeq n^{d}$ for any virtually-$\mathbb{Z}^d$ group $G$. This follows from the same argument as above combined with a result
on the stability of the commutator relator words with respect to the Hamming distance 
\cite[Corollary 6.5]{arzhantseva_almost_2015}, see Corollary \ref{cor:stab}.
\end{example}

%%%
\subsection{Hyperlinear groups}\label{sec:hyp}
This class of groups appeared in relation to the concept of hyperlinearity in operator algebras.
The definition below is due to a result of R\u{a}dulescu \cite{radulescu_von_2008}; see also \cite[Proposition 2.2.9]%
{capraro_introduction_2015} and \cite[Section 4.2]{hayes_metric_2018}.

Let $\mathcal{F}^{hyp}$ be the collection of finite-rank unitary groups $U(n), n\in \mathbb N,$
endowed with the normalized \emph{Hilbert-Schmidt distance}:
for unitary matrices $u=(u_{ij}), v=(v_{ij}) \in U(n),$ we define
\begin{equation*}\label{definition_HS}
d_{\rm{HS}}(u,v)=\| u-v\|_2=\sqrt{\frac{1}{n}\sum_{i,j=1}^n\lvert u_{ij}-v_{ij}\rvert^2}=\sqrt{\frac{1}{n}{\rm tr}((u-v)^\ast(u-v))}.
\end{equation*}
\begin{definition}[Hyperlinear group; hyperlinear profile and dimension]\label{def:hyp}
A group $G$ is said to be \emph{hyperlinear} if it is approximable by $\mathcal F^{hyp}=(U(n), d_{\rm{HS}}, n, \sqrt{2})_{n\in \mathbb N}$, 
in the sense of Definition~\ref{def:app}. 

We call \emph{%
hyperlinear profile }$\mathcal{D}_{G}^{hyp}$ and \emph{hyperlinear dimension }$%
\dim_{G}^{hyp} $ of a hyperlinear group $G$ the $\mathcal{F}^{hyp}$-profile and $\mathcal{F}^{hyp}
$-dimension (respectively) for such a choice of the approximating family $\mathcal{F}=\mathcal F^{hyp}$. 
\end{definition}

Again we convene that
the dimension $k_{n}$ of $U(n)$ is equal to $n$ and  $\varepsilon
_{n}=\sqrt{2}$ for every $n\in \mathbb{N}$. Equivalently, $\varepsilon
_{n}$ can be chosen to be constantly equal to a fixed strictly positive real number $\varepsilon \leqslant \sqrt{2}$.

The flexibility in the choice of $\varepsilon
_{n}$ that we observe in the definitions of both sofic and hyperlinear groups does not hold a priori for arbitrary $\mathcal F$-approximations. Indeed, 
it strongly depends on specific properties of the metric we use. In fact, both the Hamming and the Hilbert-Schmidt metrics
behave well under the so-called ``amplification trick'', see the discussions in~\cite{pestov_hyperlinear_2008} and~\cite{arzhantseva_linear_2012}, whence
this freedom in the choice of $\varepsilon_n$'s in the definitions of sofic and hyperlinear groups, respectively.

It follows from Proposition \ref{Proposition:bounded} that a finitely-generated group has bounded hyperlinear profile if and only if it embeds into $U(n)$ for some $n\in \mathbb{N}$.

Given two permutations $\sigma, \tau\in \Sym (n)$, let $u_\sigma, v_\tau\in U(n)$ denote the corresponding
permutation matrices. Then,
$$
d_{\rm{Ham}}(\sigma,\tau)=\frac12(d_{\rm{HS}}(u_\sigma,v_\tau))^2.
$$
It follows that sofic groups are hyperlinear.
Noting $B_{G,S}(n)\subseteq B_{G,S}(2n^2)$,  we immediately obtain that, for a sofic group $G$, we have:
$$
\mathcal{D}_{G,S}^{hyp}(n)\leqslant \mathcal{D}_{G,S}^{sof} (2n^2).
$$
The converse is not yet known: whether or not all hyperlinear groups are sofic is a well-known open problem.
Observe that the Hamming distance is an $\ell^1$-type metric and the Hilbert-Schmidt distance is the Euclidean, hence, an $\ell^2$-type metric.
Therefore, the above square root distortion of the distance under the canonical map $\Sym(n)\hookrightarrow U(n)$ sending permutations to the permutation matrices: $ \sigma\mapsto u_\sigma, \tau\mapsto v_\tau,$  can a priori not be improved into an isometric embedding.
However, we deal with approximations, whence the following
\begin{conjecture}\label{conj:sofhyp}
If $G$ is sofic, then $\mathcal{D}_{G}^{sof} (n)\preccurlyeq \mathcal{D}_{G}^{hyp}(n)$.
\end{conjecture}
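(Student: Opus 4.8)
The plan is to manufacture, from an economical hyperlinear approximation, a sofic approximation whose dimension is linearly controlled by the hyperlinear one. Concretely, to obtain $\mathcal{D}_{G}^{sof}(n)\preccurlyeq\mathcal{D}_{G}^{hyp}(n)$ it suffices to fix a constant $C$ and, starting from a $(Cn,\sqrt2)$-approximation $\pi\colon G\to U(k)$ of $(G,S)$ with $k=\mathcal{D}_{G,S}^{hyp}(Cn)$, to produce an $(n,1)$-approximation of $G$ by permutations in $\Sym(k')$ with $k'\leqslant Ck$. Since sofic approximations embed into hyperlinear ones through $\Sym(k)\hookrightarrow U(k)$ (with the square-root distortion recorded before the conjecture), the whole content is this reverse passage from unitaries to permutations without essential loss of dimension.

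The natural mechanism is a rounding step. For each $g\in B_{G,S}(n)$ replace the unitary $\pi(g)$ by a permutation matrix $u_{\sigma_g}$, $\sigma_g\in\Sym(k)$, minimizing $d_{\rm{HS}}(\pi(g),u_{\sigma_g})$, and then verify that the family $(\sigma_g)$ inherits almost-multiplicativity and uniform injectivity. Using the identity $d_{\rm{Ham}}(\sigma,\tau)=\tfrac12\,d_{\rm{HS}}(u_\sigma,v_\tau)^2$ from the discussion above, the injectivity bound $d_{\rm{HS}}(\pi(g),\pi(h))>\sqrt2-1/(Cn)$ for $g\neq h$ would translate into $d_{\rm{Ham}}(\sigma_g,\sigma_h)>1-1/n$, and the Hilbert--Schmidt almost-homomorphism bound would likewise pass to the required Hamming bound, \emph{provided} the rounding errors $d_{\rm{HS}}(\pi(g),u_{\sigma_g})$ are uniformly small across the ball. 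Thus the argument reduces to a single quantitative assertion: a unitary that is Hilbert--Schmidt-close to an approximate permutation representation admits a nearby permutation matrix already in dimension $O(k)$.

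The principal obstacle is precisely this rounding. A general element of $U(k)$ need not lie Hilbert--Schmidt-close to any permutation matrix, and the unitaries furnished by a hyperlinear approximation carry no a priori permutation structure toward which to round. The only evident lever is the standing hypothesis that $G$ is \emph{sofic}: one would like to show that an optimal hyperlinear approximation of a sofic group can be \emph{normalized}---by conjugation into block-permutation form, or by averaging against a known sofic model---so that each $\pi(g)$ becomes Hilbert--Schmidt-close to a permutation matrix. A promising input is the stability phenomenon already invoked in Example~\ref{ex:ab} (stability of the commutator relators in the Hamming metric, Corollary~\ref{cor:stab}); an analogous \emph{Hamming-to-Hilbert--Schmidt} stability, asserting that a Hilbert--Schmidt-approximate permutation representation is Hilbert--Schmidt-close to an honest one in comparable dimension, would at once supply the rounding and finish the proof.

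I expect the genuinely hard point to be the control of dimension in this quantization. Even granting that $U(k)$ embeds into a metric ultraproduct of symmetric groups, the degree $N$ needed to $\delta$-approximate all of $U(k)$ by $\Sym(N)$ is a priori super-polynomial in $k$, which would yield only $\mathcal{D}_{G}^{sof}\preccurlyeq\exp(\mathcal{D}_{G}^{hyp})$ rather than the conjectured linear comparison. Hence the statement cannot be reduced to a soft embedding of unitary groups into ultraproducts of permutation groups; it demands that, along the \emph{specific} sequence realizing the hyperlinear profile of a sofic $G$, the transition to permutations cost only a bounded factor in dimension. Securing such a dimension-preserving quantization amounts to a quantitative refinement, for groups already known to be sofic, of the open problem whether every hyperlinear group is sofic, and this is where any complete proof must do essentially new work.
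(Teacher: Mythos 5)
You are addressing Conjecture~\ref{conj:sofhyp}, which the paper states as an \emph{open conjecture} and does not prove; your proposal does not prove it either, as you yourself concede in your final paragraph. The genuine gap is exactly the one you isolate: the rounding step. Nothing in the definition of a hyperlinear approximation places the unitaries $\pi(g)$ Hilbert--Schmidt-close to permutation matrices, and the hypothesis that $G$ is sofic only supplies a sofic model in some \emph{other}, uncontrolled dimension, with no mechanism for transporting it onto the specific sequence of unitary approximations realizing $\mathcal{D}_{G}^{hyp}$. Your reduction to a ``Hamming-to-Hilbert--Schmidt stability'' assertion in comparable dimension is a reformulation of the difficulty, not a resolution of it: no such statement is known, and proving one would be essentially new work adjacent to the sofic-versus-hyperlinear problem. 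So the proposal should be read as a (largely correct) diagnosis of why the statement is open, not as a proof, and it cannot be accepted as one.

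For comparison, the only positive evidence the paper records is for sofic groups that are $\mathcal F^{hyp}$-stable in the sense of Definition~\ref{def:stab}: for such $G$ (finitely presented and residually finite in the relevant examples), Corollary~\ref{cor:stab}\ref{cor:stab2} gives $\mathcal{D}_{G,S}^{hyp}(n)\simeq \Phi_{G,S}(n)$, and since $\mathcal{D}_{G,S}^{sof}(n)\leqslant \Phi_{G,S}(n)$ always holds for residually finite groups, the conjectured inequality follows for, e.g., virtually abelian groups and the Heisenberg groups (Example~\ref{ex:stablegps}). Note that stability plays precisely the normalizing role you hope for --- it replaces an approximate unitary representation by an exact one nearby \emph{in the same dimension} --- but it is a strong hypothesis on $G$, verified only in special cases, and Questions~\ref{q:sofhyp} and~\ref{q:sofhyps} of the paper ask exactly how far this route can be pushed. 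If you want to salvage something from your outline, the honest conclusion is a conditional statement: the conjecture holds for every sofic group that is $\mathcal F^{hyp}$-stable, and your ``rounding'' heuristic is a weaker ad hoc substitute for that hypothesis rather than an unconditional argument.
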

The conjecture holds for sofic $\mathcal F^{hyp}$-stable groups (see Definition~\ref{def:stab}), e.g. for virtually abelian groups and the Heisenberg groups, see~Corollary~\ref{cor:stab}\ref{cor:stab2} and Example~\ref{ex:stablegps}. See also Question~\ref{q:sofhyp} and Question~\ref{q:sofhyps}.

Here is a useful modification of the hyperlinear profile.
The finite-rank unitary groups $U(n), n\in \mathbb N,$ can be endowed with the normalized \emph{%
projective Hilbert-Schmidt pseudo-distance}:  for unitary matrices $u, v\in U(n),$ we define%
\begin{equation*}
d_{\overline{\mathrm{HS}}}(u,v)=\inf_{\lambda \in \mathbb{T}}\sqrt{\frac{1}{n}%
\mathrm{tr}\left((u-\lambda v)^{\ast }(u-\lambda v)\right),}
\end{equation*}%
where $\mathbb{T}$ denotes the set of complex numbers of modulus $1$.

\begin{definition}[Projective hyperlinear profile and dimension]\label{def:phyp}
Let $G$ be a finitely generated group with a finite
generating set $S$. The \emph{projective hyperlinear profile} of $G$ is the function $%
\mathcal{D}_{G,S}^{\overline{hyp}}\colon\mathbb{N}\rightarrow \mathbb{N}\cup \left\{
+\infty \right\} $ defined by setting $\mathcal{%
D}_{G,S}^{\overline{hyp}}\left( n\right) $ to be the least $k$ such that there exists
a function $\sigma\colon G\rightarrow U\left( k\right) $ such that

\begin{itemize}
\item[(1)] $d_{\mathrm{HS}}\left( \sigma \left( g\right) \sigma \left( h\right)
,\sigma \left( gh\right) \right) <1/n$ for every $g,h\in B_{G,S}\left( n\right) $ with $gh\in B_{G,S}\left( n\right)$, and

\item[(2)] $d_{\overline{\mathrm{HS}}}\left( \sigma \left( g\right) ,\sigma \left(
h\right) \right) >\sqrt{2}-1/n$ for every $g,h\in B_{G,S}\left( n\right) $
such that $g\neq h$.
\end{itemize}

The \emph{projective hyperlinear dimension} of $G$
is defined by
$$
\dim_{G,S}^{\overline{hyp}}=\limsup_{n\rightarrow +\infty}\frac{1}{n}\log \mathcal{D}%
_{G,S}^{\overline{hyp}}\left( n\right). 
$$
\end{definition}

Two distinct (pseudo)distances on $U(k)$ are used in the two conditions above.
One might consider the existence of a map $\sigma \colon G\rightarrow U\left( k\right)$ from Definition~\ref{def:phyp}
as an alternative definition of a hyperlinear group. Indeed, a result of R\u{a}dulescu~\cite{radulescu_von_2008} is that this
is actually equivalent to Definition~\ref{def:hyp}. Our next result shows a precise
relationship between these two approaches on the quantifying level.

For the proof of the following proposition, observe that $d_{\mathrm{HS}%
}\left( u,e_{U(n)}\right) ^{2}=2-\frac{2}{n}\mathrm{Re}\left( \mathrm{tr}\left(
u\right) \right) $, and $d_{\overline{\mathrm{HS}}}\left( u,e_{U(n)}\right) ^{2}=2-\frac{2%
}{n}\left\vert \mathrm{tr}\left( u\right) \right\vert $ for $u\in U\left(
n\right) $. Observe furthermore that $d_{\overline{\mathrm{HS}}}\left( u,v\right)
\leqslant d_{\mathrm{HS}}\left( u,v\right) $ for $u,v\in U\left( n\right) $. Our next result is based on the \textquotedblleft
amplification trick\textquotedblright ; see \cite{radulescu_von_2008}, \cite[%
Proposition 2.2.9]{capraro_introduction_2015}, \cite[Section 4.2]%
{hayes_metric_2018}.

\begin{proposition}
\label{Proposition:projective-hyperlinear estimate} Let $G\ $ be a
group with a finite generating set $S$. Then 
\begin{equation*}
\mathcal{D}_{G,S}^{hyp}\left( n\right) \leqslant \mathcal{D}_{G,S}^{\overline{hyp}}\left(
n\right)
\end{equation*}
and, for $n\geqslant \left( \frac{4\sqrt{2}}{5}-1\right) ^{-1}$, 
\begin{equation*}
\mathcal{D}_{G,S}^{\overline{hyp}}\left( n\right) \leqslant \left(2\mathcal{D}_{G,S}^{hyp}%
\left( 40n\right) \right)^{\ell }
\end{equation*}
where $\ell =\left\lceil \log \left( \frac{1}{\frac{\sqrt{2}}{20n}-\frac{1}{%
200n^{2}}}\right) \log \left( \frac{5}{4}\right) ^{-1}\right\rceil $.
\end{proposition}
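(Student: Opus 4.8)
The first inequality is immediate from the pointwise comparison $d_{\overline{\mathrm{HS}}}\leqslant d_{\mathrm{HS}}$ recorded just before the statement. Both $\mathcal D^{hyp}_{G,S}$ and $\mathcal D^{\overline{hyp}}_{G,S}$ impose the \emph{same} almost-homomorphism condition (1), measured in $d_{\mathrm{HS}}$; they differ only in the separation condition (2). Hence, if $\sigma\colon G\to U(k)$ witnesses $\mathcal D^{\overline{hyp}}_{G,S}(n)=k$, then for distinct $g,h\in B_{G,S}(n)$ one has $d_{\mathrm{HS}}(\sigma(g),\sigma(h))\geqslant d_{\overline{\mathrm{HS}}}(\sigma(g),\sigma(h))>\sqrt2-1/n$, so the very same $\sigma$ is a hyperlinear approximation of dimension $k$ and $\mathcal D^{hyp}_{G,S}(n)\leqslant k$.

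The second inequality is an amplification argument, and the real obstacle is already visible in the trace formulas $d_{\mathrm{HS}}(\sigma(g),\sigma(h))^2=2-2\,\mathrm{Re}(t)$ and $d_{\overline{\mathrm{HS}}}(\sigma(g),\sigma(h))^2=2-2\lvert t\rvert$, where $t=\frac1k\mathrm{tr}(\sigma(g)^{\ast}\sigma(h))$: a hyperlinear approximation controls only $\mathrm{Re}(t)$, while the projective distance sees $\lvert t\rvert$. Thus a hyperlinear approximation may send distinct $g,h$ to near scalar multiples, $\sigma(h)\approx i\,\sigma(g)$, yielding the maximal value $d_{\mathrm{HS}}=\sqrt2$ but $d_{\overline{\mathrm{HS}}}=0$. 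One must therefore first destroy this scalar ambiguity and only afterwards amplify the separation. Concretely, I would fix $k=\mathcal D^{hyp}_{G,S}(40n)$ and a hyperlinear $(40n,\sqrt2)$-approximation $\sigma\colon G\to U(k)$, set $\rho(g)=\sigma(g)\oplus I_k\in U(2k)$, and finally $\Phi=\rho^{\otimes\ell}\colon G\to U\left((2k)^{\ell}\right)$, so that the dimension is exactly $(2k)^{\ell}=\bigl(2\,\mathcal D^{hyp}_{G,S}(40n)\bigr)^{\ell}$. The trivial summand $I_k$ breaks the scalar ambiguity: for distinct $g,h\in B_{G,S}(n)\subseteq B_{G,S}(40n)$ the normalized trace ratio of $\rho(g)^{\ast}\rho(h)$ equals $t'=\tfrac12(t+1)$, and since $\lvert t'\rvert^2=\tfrac14\bigl(1+2\,\mathrm{Re}(t)+\lvert t\rvert^2\bigr)$ with $\lvert t\rvert\leqslant1$ and $\mathrm{Re}(t)<\sqrt2/(40n)$ by the separation at scale $40n$, one obtains $\lvert t'\rvert\leqslant 4/5$ for $n$ in the stated range.

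Passing to the $\ell$-th tensor power multiplies traces, so the normalized trace ratio of $\Phi(g)^{\ast}\Phi(h)$ is $(t')^{\ell}$, and $d_{\overline{\mathrm{HS}}}(\Phi(g),\Phi(h))^2=2-2\lvert t'\rvert^{\ell}\geqslant 2-2\,(4/5)^{\ell}$; choosing $\ell$ as in the statement forces $(4/5)^{\ell}$ below the threshold that makes this exceed $(\sqrt2-1/n)^2$, which is exactly the projective separation (2) at level $n$. It remains to check the almost-homomorphism condition (1) for $\Phi$. Since $\rho(g)\rho(h)$ and $\rho(gh)$ differ only in the $U(k)$-block, $d_{\mathrm{HS}}(\rho(g)\rho(h),\rho(gh))=\tfrac1{\sqrt2}\,d_{\mathrm{HS}}(\sigma(g)\sigma(h),\sigma(gh))<\tfrac{1}{40\sqrt2\,n}$; and because the normalized Hilbert--Schmidt norm is multiplicative under tensor products and unitaries have norm $1$, the telescoping identity $A^{\otimes\ell}-B^{\otimes\ell}=\sum_{j=0}^{\ell-1}A^{\otimes j}\otimes(A-B)\otimes B^{\otimes(\ell-1-j)}$ gives $d_{\mathrm{HS}}(\Phi(g)\Phi(h),\Phi(gh))\leqslant \ell\,d_{\mathrm{HS}}(\rho(g)\rho(h),\rho(gh))$. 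The finer scale $40n$ is chosen precisely so that this stays below $1/n$. This last point is the heart of the matter and the main obstacle: tensoring simultaneously amplifies the projective separation (which we want, and which forces $\ell$ of logarithmic size) and inflates the almost-homomorphism defect by the same factor $\ell$ (which we must keep below $1/n$); reconciling the two is exactly what dictates starting at scale $40n$ and calibrating $\ell$ as in the statement. Keeping in hand the two identities for $d_{\mathrm{HS}}(u,e)$ and $d_{\overline{\mathrm{HS}}}(u,e)$ recorded before the statement turns every distance bound into a trace estimate, and combining the two conditions yields $\mathcal D^{\overline{hyp}}_{G,S}(n)\leqslant\bigl(2\,\mathcal D^{hyp}_{G,S}(40n)\bigr)^{\ell}$.
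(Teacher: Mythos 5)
Your overall strategy is the same as the paper's: the first inequality from the pointwise comparison $d_{\overline{\mathrm{HS}}}\leqslant d_{\mathrm{HS}}$, and for the second the amplification trick, first forming $\rho(g)=\sigma(g)\oplus I_k\in U(2k)$ to break the scalar ambiguity and then taking the $\ell$-th tensor power, with the same $\ell$. Your treatment of the separation condition is correct and in fact slightly cleaner than the paper's: you bound $\mathrm{Re}\,\tau\bigl(\sigma(g)^{\ast}\sigma(h)\bigr)$ directly from the two-point injectivity at scale $40n$, whereas the paper works with $d_{\overline{\mathrm{HS}}}(\widetilde{\sigma}^{\otimes\ell}(g^{-1}h),e)$ and then pays an extra $1/8n$ via Lemma~\ref{Lemma:approximate-homomorphism} to convert back to a two-point estimate.

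The gap is exactly at the step you yourself call ``the heart of the matter''. Your telescoping bound $d_{\mathrm{HS}}(\Phi(g)\Phi(h),\Phi(gh))\leqslant\ell\, d_{\mathrm{HS}}(\rho(g)\rho(h),\rho(gh))<\ell/(40\sqrt{2}\,n)$ is the correct general estimate, but the concluding assertion that the scale $40n$ makes this smaller than $1/n$ is not verified and is false for large $n$: since $\delta\approx\sqrt{2}/(20n)$, the exponent $\ell=\bigl\lceil\log(1/\delta)\log(5/4)^{-1}\bigr\rceil$ grows like a constant times $\log n$, so $\ell/(40\sqrt{2}\,n)<1/n$ forces $\ell<40\sqrt{2}\approx 56$, which fails once $n$ exceeds roughly $2\cdot10^{4}$. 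A fixed linear rescaling $n\mapsto Cn$ cannot absorb a defect inflation that grows with $n$, so as written your verification of the almost-homomorphism condition (1) for $\Phi$ does not close on the whole stated range. For comparison, the paper's proof asserts at this point that $d_{\mathrm{HS}}(\widetilde{\sigma}^{\otimes\ell}(g)\widetilde{\sigma}^{\otimes\ell}(h),\widetilde{\sigma}^{\otimes\ell}(gh))\leqslant d_{\mathrm{HS}}(\sigma(g)\sigma(h),\sigma(gh))$ with no factor of $\ell$ at all; that inequality is not justified there and fails in general (already for $u=e^{i\theta}v$ with $\theta$ small one has $d_{\mathrm{HS}}(u^{\otimes\ell},v^{\otimes\ell})\approx\ell\,d_{\mathrm{HS}}(u,v)$), so the inflation you identified is a genuine phenomenon and not an artifact of the telescoping. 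To obtain a complete argument one would need either to start at a scale of order $n\log n$ rather than $40n$, or to prove a sharper estimate on $2-2\,\mathrm{Re}\bigl(\tau(u^{\ast}v)^{\ell}\bigr)$ exploiting that $\tau(u^{\ast}v)$ is close to $1$ (not merely of modulus close to $1$); either way the claim needs an actual verification rather than the assertion you give.
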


\begin{proof}
The first inequality is obvious. Let us check the second inequality. For $%
u\in U\left( k\right) $,  we set $\tau \left( u\right) =\frac{1}{k}\mathrm{tr}\left(
u\right) $. Suppose that $n\geqslant \left( \frac{4\sqrt{2}}{5}-1\right) ^{-1}$.
Suppose that $\sigma\colon G\rightarrow U\left( k\right) $ is an $(40n,\sqrt{2})$%
-approximation for $\left( G,S\right) $. Then for $g\in B_{G,S}\left(
n\right) $ such that $g\neq e_G$ we have that $d_{\mathrm{HS}}\left( \sigma
\left( g\right) ,e_{U(k)}\right) \geqslant \sqrt{2}-\frac{1}{40n}$. Therefore, 
\begin{equation*}
2-2\mathrm{Re}\left( \tau \left( \sigma \left( g\right) \right) \right) =d_{%
\mathrm{HS}}\left( \sigma \left( g\right) ,e_{U(k)}\right) ^{2}\geqslant 2-\frac{1}{10n}%
\text{.}
\end{equation*}%
Hence $\mathrm{Re}\left( \tau \left( u\right) \right) \leqslant \frac{1}{20n}$.
Consider the map $\widetilde{\sigma }\colon G\rightarrow U\left( 2k\right) $
defined by 
\begin{equation*}
\widetilde{\sigma }\left( g\right) =%
\begin{bmatrix}
\sigma \left( g\right) & 0 \\ 
0 & 1%
\end{bmatrix}%
\text{.}
\end{equation*}%
Then for $g\in G$ one has that, since $n\geqslant \left( \frac{4\sqrt{2}}{5}%
-1\right) ^{-1}$, 
\begin{equation*}
\left\vert \tau \left( \widetilde{\sigma}(g)\right) \right\vert =\frac{\left\vert 1+\tau \left(
\sigma(g)\right) \right\vert }{2}\leqslant \frac{\sqrt{2}+\frac{1}{20n}}{2}\leqslant \frac{1+%
\frac{1}{20n}}{\sqrt{2}}\leqslant \frac{4}{5}\text{.}
\end{equation*}

Set $\delta :=\frac{\sqrt{2}}{20n}-\frac{1}{200n^{2}}$. Fix $\ell \in 
\mathbb{N}$ such that 
\begin{equation*}
\log \left( \frac{5}{4}\right) \ell \geqslant \log \left( \frac{1}{\delta }%
\right) \text{.}
\end{equation*}%
Consider the map $\widetilde{\sigma }^{\otimes \ell }\colon G\rightarrow U\left(
(2k)^{\ell }\right) $ defined by $\widetilde{\sigma }^{\otimes \ell }\left(
g\right) =\widetilde{\sigma }\left( g\right) \otimes \widetilde{\sigma }%
\left( g\right) \otimes \cdots \otimes \widetilde{\sigma }\left( g\right) $.
Then we have that, for $g\in B_{G,S}\left( n\right) $ such that $g\neq e_G$,
since 
\begin{equation*}
\left\vert \tau (\widetilde{\sigma }^{\otimes \ell }\left( g\right)
)\right\vert =\left\vert \tau \left( \widetilde{\sigma }\left( g\right)
\right) \right\vert ^{\ell }\leqslant \left( \frac{4}{5}\right) ^{\ell }\leqslant
\delta \text{.}
\end{equation*}%
Therefore, we have that%
\begin{equation*}
d_{\overline{\mathrm{HS}}}(\widetilde{\sigma }^{\otimes \ell }\left( g\right)
,e_{U((2k)^{\ell})})^{2}=2-2\left\vert \tau \left( \widetilde{\sigma }^{\otimes \ell }\left(
g\right) \right) \right\vert \geqslant 2-2\delta \text{.}
\end{equation*}%
Hence,%
\begin{equation*}
d_{\overline{\mathrm{HS}}}(\widetilde{\sigma }^{\otimes \ell }\left( g\right)
,e_{U((2k)^{\ell})})\geqslant \sqrt{2-2\delta }> \sqrt{2}-1/10n\text{.}
\end{equation*}%
Observe now that for $g,h\in B_{G,S}\left( n\right) $ with $gh\in B_{G,S}\left( n\right)$ one has that%
\begin{equation*}
d_{\mathrm{HS}}(\widetilde{\sigma }^{\otimes \ell }\left( g\right) 
\widetilde{\sigma }^{\otimes \ell }\left( h\right) ,\widetilde{\sigma }%
^{\otimes \ell }\left( gh\right) )\leqslant d_{\mathrm{HS}}(\sigma \left(
g\right) \sigma \left( h\right) ,\sigma \left( gh\right) )\leqslant 1/40n\text{.}
\end{equation*}%
Finally, if $g,h\in B_{G,S}\left( n\right) $ are such that $g\neq h,$ then, using the bi-invariance of $d_{\overline{\mathrm{HS}}}$, the triangle inequality, 
and the almost homomorphism condition on $d_{HS}$,  and hence on $d_{\overline{HS}}$, (cf. Lemma~\ref{Lemma:approximate-homomorphism} (5)) we
have that%
\begin{eqnarray*}
d_{\overline{\mathrm{HS}}}(\widetilde{\sigma }^{\otimes \ell }\left( h\right) ,%
\widetilde{\sigma }^{\otimes \ell }\left( g\right) ) &\geqslant &d_{\overline{HS}}(\widetilde{%
\sigma }^{\otimes \ell }\left( g^{-1}h\right) ,e_{U((2k)^{\ell})})-\frac{1}{8n} \\
&\geqslant &\sqrt{2}-1/10n-1/8n\geqslant \sqrt{2}-1/n\text{.}
\end{eqnarray*}%
This concludes the proof.
\end{proof}

%%%%%%%%
\subsection{Linear sofic groups}\label{sec:linsof}
This class of groups has been introduced and systematically studied by Arzhantseva-P\u{a}unescu~\cite{arzhantseva_linear_2012}.
The next definition is due to \cite[Proposition 4.4]{arzhantseva_linear_2012}.

Let $\mathcal{F}^{lin}$ be the collection of groups $GL(n, \mathbb{K})$ of $n\times n$ invertible matrices with coefficients in a given filed $\mathbb K,$ 
endowed with the normalized \emph{rank distance}:
for invertible matrices $u, v \in GL(n, \mathbb{K}),$ we define
\begin{equation*}\label{definition_HS}
d_{\rm{rank}}(u,v)=\frac{1}{n}{\rm rank}( u-v).
\end{equation*}
\begin{definition}[Linear sofic group; linear sofic profile and dimension]\label{def:lsof}
A group $G$ is said to be \emph{linear sofic over a field $\mathbb K$} if it is approximable by 
$\mathcal F^{lin}=(GL(n, \mathbb{K}), d_{\rm{rank}}, n, 1/4)_{n\in \mathbb N}$, 
in the sense of Definition~\ref{def:app}.

 We call \emph{%
linear sofic  profile }$\mathcal{D}_{G}^{lin}$ and \emph{linear sofic dimension }$%
\dim_{G}^{lin} $ over a field $\mathbb K$ of a linear sofic group $G$ over a field $\mathbb K$ the $\mathcal{F}^{lin}$-profile and $\mathcal{F}^{lin}
$-dimension (respectively) for such a choice of the approximating family $\mathcal{F}=\mathcal F^{lin}$. 

\end{definition}

Thus, the dimension $k_n$ of $GL\left(n, \mathbb{K}\right) $ is declared to be $n$ and 
 $\varepsilon_{n}$ is constantly equal to  $1/4$. The value of $1/4$ comes from the so-called \emph{rank amplification},
 a construction introduced in~\cite{arzhantseva_linear_2012}.
It is not known whether $1/4$ can be replaced by a larger value (fixed or arbitrarily chosen between $1/4$ and 1), see Question~\ref{q:lsofic}. 

It follows from Proposition \ref{Proposition:bounded} that a finitely-generated group has bounded linear sofic profile if and only if it is linear, namely, if and only if it embeds into  $GL\left(n, \mathbb{K}\right) $ for some $n\in \mathbb N$.

As above, we represent permutations by permutation matrices: $\Sym(n)\ni\sigma\mapsto u_\sigma\in GL\left(n, \mathbb{K}\right)$, for a fixed arbitrary field $\mathbb K$. Observe  that~\cite[Proposition 4.5]{arzhantseva_linear_2012}:
$$
 d_{\rm{rank}}(u_\sigma,  e_{GL\left(n, \mathbb{K}\right)} )\leqslant d_{\rm{Hamm}}(\sigma,  e_{\Sym(n)} )\leqslant 2d_{\rm{rank}}(u_\sigma,  e_{GL\left(n, \mathbb{K}\right)}).
$$
Therefore, sofic groups are linear sofic over any given field $\mathbb K$ and, for a sofic group $G$, we have:
$$
\mathcal{D}_{G,S}^{lin} (n)\leqslant \mathcal{D}_{G,S}^{sof}(n).
$$

Here is a useful modification of the linear sofic profile, which is inspired by the projective variant of the hyperlinear profile. 
The groups $GL\left(n, \mathbb{K}\right), n\in \mathbb N,$ can be endowed with the normalized \emph{Jordan} or \emph{%
projective rank pseudodistance}:  for matrices $u, v\in GL\left(n, \mathbb{K}\right)$, following \cite%
{hayes_metric_2018}, we define%
\begin{equation*}
d_{\overline{\mathrm{rank}}}\left( u,v\right) =\frac{1}{n}\min_{\lambda \in \mathbb{%
F}^{\times }}\mathrm{\mathrm{rank}}\left( u-\lambda v\right),
\end{equation*}%
where $\mathbb{F}$ is the algebraic closure of $\mathbb{K}$ and
the rank is computed in $\mathbb{F}^{n}$.

\begin{definition}[Projective linear sofic profile and dimension]\label{def:plsof}
Let $G$ be a finitely generated group with a finite
generating set $S$. The \emph{projective linear sofic profile} of $G$ is the function $%
\mathcal{D}_{G,S}^{\overline{lin}}\colon\mathbb{N}\rightarrow \mathbb{N}\cup \left\{
+\infty \right\} $ defined by setting $\mathcal{%
D}_{G,S}^{\overline{lin}}\left( n\right) $ to be the least $k$ such that there exists
a function $\sigma\colon G\rightarrow GL\left( k, \mathbb K\right) $ such that

\begin{itemize}
\item[(1)] $d_{\mathrm{rank}}\left( \sigma \left( g\right) \sigma \left( h\right)
,\sigma \left( gh\right) \right) <1/n$ for every $g,h\in B_{G,S}\left(
n\right) $ with $gh\in B_{G,S}\left( n\right)$, and

\item[(2)] $d_{\overline{\mathrm{rank}}}\left( \sigma \left( g\right) ,\sigma \left(
h\right) \right) >\frac{1}{8}-1/n$ for every $g,h\in B_{G,S}\left( n\right) $
such that $g\neq h$.
\end{itemize}

The \emph{projective linear sofic dimension} of $G$
is defined by
$$
\dim_{G,S}^{\overline{lin}}=\limsup_{n\rightarrow +\infty}\frac{1}{n}\log \mathcal{D}%
_{G,S}^{\overline{lin}}\left( n\right). 
$$

\end{definition}

As in the projective hyperlinear case,  two distinct (pseudo)distances on $GL(k, \mathbb K)$ are used in the two conditions above and 
one might consider the existence of a map $\sigma\colon G\to GL(k,\mathbb K)$ from Definition~\ref{def:plsof}  as an alternative definition of a linear sofic group. 
Indeed, a result of Arzhantseva-P\u{a}unescu~\cite[Theorem 5.10]{arzhantseva_linear_2012}  shows that this is actually equivalent to Definition~\ref{def:lsof}. 
Moreover, our next result is that these two approaches are equivalent on the quantifying level as well.

\begin{proposition}
Let $G\ $ be a
group with a finite generating set $S$. Then $\mathcal{D}%
_{G,S}^{lin}\left( n\right) \leqslant \mathcal{D}_{G,S}^{\overline{lin}}\left( n\right) $
and $\mathcal{D}_{G,S}^{\overline{lin}}\left( n\right) \leqslant 2\mathcal{D}%
_{G,S}^{lin}\left( 2n\right) $.
\end{proposition}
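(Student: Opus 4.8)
The two inequalities point in opposite directions, and only the second requires a construction, so I treat them separately. For the first inequality $\mathcal{D}_{G,S}^{lin}(n)\leqslant \mathcal{D}_{G,S}^{\overline{lin}}(n)$ I would take a map $\sigma\colon G\to GL(k,\mathbb{K})$ realizing $k=\mathcal{D}_{G,S}^{\overline{lin}}(n)$ and check that the very same $\sigma$ serves as a linear sofic approximation of $(G,S)$ at level $n$ of dimension $k$. Condition (1) of Definition~\ref{def:plsof} is verbatim the almost-homomorphism condition for an approximation into $GL(k,\mathbb{K})$ for the metric $d_{\mathrm{rank}}$, so nothing is needed there. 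For the injectivity condition I use the elementary domination $d_{\overline{\mathrm{rank}}}(u,v)\leqslant d_{\mathrm{rank}}(u,v)$, valid because the minimum defining $d_{\overline{\mathrm{rank}}}$ includes $\lambda=1$: for $g\neq h$ in $B_{G,S}(n)$ this gives $d_{\mathrm{rank}}(\sigma(g),\sigma(h))\geqslant d_{\overline{\mathrm{rank}}}(\sigma(g),\sigma(h))>\tfrac18-\tfrac1n$, so the rank separation is inherited from the projective one. The delicate bookkeeping point here is reconciling this with the separation constant $\tfrac14$ built into $\mathcal{F}^{lin}$; this is precisely where the rank-amplification robustness underlying the value $\tfrac14$ (in the spirit of Definition~\ref{def:lsof} and the equivalence of Arzhantseva--P\u{a}unescu) is used, and it mirrors the relation $\tfrac18=\tfrac12\cdot\tfrac14$ that reappears below.

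For the second inequality $\mathcal{D}_{G,S}^{\overline{lin}}(n)\leqslant 2\mathcal{D}_{G,S}^{lin}(2n)$ I would start from an $(2n,1/4)$-approximation $\sigma\colon G\to GL(k,\mathbb{K})$ with $k=\mathcal{D}_{G,S}^{lin}(2n)$ and produce a competitor for the projective profile of dimension $2k$ by the block construction
\[
\widetilde{\sigma}(g)=\begin{bmatrix}\sigma(g) & 0\\ 0 & I_k\end{bmatrix}\in GL(2k,\mathbb{K}),
\]
which parallels the matrix $\widetilde{\sigma}$ used in the proof of Proposition~\ref{Proposition:projective-hyperlinear estimate}, the only change being that a full identity block $I_k$, rather than a single $1$, is what doubles the dimension. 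Condition (1) is straightforward: the off-diagonal blocks cancel, so $\widetilde{\sigma}(g)\widetilde{\sigma}(h)-\widetilde{\sigma}(gh)$ is block-diagonal with corner $\sigma(g)\sigma(h)-\sigma(gh)$ and zero elsewhere, whence $d_{\mathrm{rank}}(\widetilde{\sigma}(g)\widetilde{\sigma}(h),\widetilde{\sigma}(gh))=\tfrac12\,d_{\mathrm{rank}}(\sigma(g)\sigma(h),\sigma(gh))<\tfrac12\cdot\tfrac1{2n}<\tfrac1n$ for $g,h\in B_{G,S}(n)$ with $gh\in B_{G,S}(n)$.

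The heart of the matter is condition (2), where for $g\neq h$ in $B_{G,S}(n)$ one must bound
\[
d_{\overline{\mathrm{rank}}}(\widetilde{\sigma}(g),\widetilde{\sigma}(h))=\frac1{2k}\min_{\lambda\in\mathbb{F}^{\times}}\bigl[\mathrm{rank}(\sigma(g)-\lambda\sigma(h))+\mathrm{rank}((1-\lambda)I_k)\bigr]
\]
from below. The main obstacle is exactly that this pseudodistance minimizes over \emph{all} scalars $\lambda$, so a priori the minimizing $\lambda$ could sit far from $1$ and annihilate the separation supplied by $\sigma$; this is the scalar ambiguity intrinsic to the projective metric. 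The identity block is designed to defeat it: for $\lambda\neq 1$ one has $\mathrm{rank}((1-\lambda)I_k)=k$, so that term alone forces the bracket to be at least $k$, contributing at least $\tfrac1{2k}\cdot k=\tfrac12$; while for $\lambda=1$ the bracket equals $\mathrm{rank}(\sigma(g)-\sigma(h))$, and the separation of the $(2n,1/4)$-approximation gives $\tfrac12\,d_{\mathrm{rank}}(\sigma(g),\sigma(h))>\tfrac12\bigl(\tfrac14-\tfrac1{2n}\bigr)=\tfrac18-\tfrac1{4n}$. Hence the minimum over $\lambda$ is governed by $\lambda=1$ and $d_{\overline{\mathrm{rank}}}(\widetilde{\sigma}(g),\widetilde{\sigma}(h))>\tfrac18-\tfrac1{4n}>\tfrac18-\tfrac1n$, which is condition (2) of Definition~\ref{def:plsof}; thus $\widetilde{\sigma}$ witnesses $\mathcal{D}_{G,S}^{\overline{lin}}(n)\leqslant 2k$. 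The only remaining steps are routine: the field-extension invariance of rank (so that $d_{\mathrm{rank}}$ computed over $\mathbb{K}$ agrees with the ranks over $\mathbb{F}$ in the two cases) and the use of level $2n$ rather than $n$, which simply yields comfortable margins in the estimates above.
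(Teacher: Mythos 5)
Your treatment of the second inequality is correct and, unlike the paper (whose proof of this half consists of a citation to Proposition 4.8 of Hayes together with Theorem 5.10 of Arzhantseva--P\u{a}unescu), is fully self-contained. The block construction $\widetilde{\sigma}(g)=\mathrm{diag}(\sigma(g),I_k)$, the identity $\mathrm{rank}(\widetilde{\sigma}(g)-\lambda\widetilde{\sigma}(h))=\mathrm{rank}(\sigma(g)-\lambda\sigma(h))+\mathrm{rank}((1-\lambda)I_k)$, and the dichotomy $\lambda=1$ versus $\lambda\neq1$ (where the identity block alone forces normalized rank at least $\tfrac12$, defeating the scalar ambiguity) all check out; the margins $\tfrac12\bigl(\tfrac14-\tfrac1{2n}\bigr)=\tfrac18-\tfrac1{4n}>\tfrac18-\tfrac1n$ and $\tfrac12\cdot\tfrac1{2n}<\tfrac1n$ are exactly right, and rank is indeed insensitive to the field extension $\mathbb K\subseteq\mathbb F$. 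This is the same mechanism as the dimension-doubling step $\widetilde{\sigma}$ in the paper's proof of Proposition~\ref{Proposition:projective-hyperlinear estimate} (and, in effect, of the cited reference), so on this half you have reconstructed the argument the paper leaves implicit.

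The gap is in the first inequality. From a witness $\sigma$ of $\mathcal{D}^{\overline{lin}}_{G,S}(n)=k$, the domination $d_{\overline{\mathrm{rank}}}\leqslant d_{\mathrm{rank}}$ yields only $d_{\mathrm{rank}}(\sigma(g),\sigma(h))>\tfrac18-\tfrac1n$, whereas an $(n,\varepsilon_k)$-approximation for $\mathcal F^{lin}$ requires $d_{\mathrm{rank}}(\sigma(g),\sigma(h))>\tfrac14-\tfrac1n$, and the former does not imply the latter. You flag this and invoke ``rank-amplification robustness,'' but that cannot close the gap at the claimed cost: rank amplification replaces $GL(k,\mathbb K)$ by a group of strictly larger dimension, so it could at best give $\mathcal{D}^{lin}_{G,S}(n)\leqslant f(\mathcal{D}^{\overline{lin}}_{G,S}(n))$ for some superlinear $f$, not the dimension-preserving inequality asserted; moreover, whether the separation constant $\tfrac14$ can be relaxed at all is precisely the open Question~\ref{q:lsofic}, so it is not something to lean on. (In fairness, the paper itself declares this inequality ``obvious.'' It genuinely is obvious in the hyperlinear analogue, where both definitions use the same constant $\sqrt2$; with the constants $\tfrac18$ and $\tfrac14$ as written, the one-line argument produces only an $(n,\tfrac18)$-approximation, so either the first inequality must be read with the linear sofic profile taken at separation $\tfrac18$, or it needs an argument that neither your proposal nor the paper supplies.)
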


\begin{proof}
The first inequality is obvious. The second inequality is proved in \cite[%
Proposition 4.8]{hayes_metric_2018}, using \cite[Theorem 5.10]%
{arzhantseva_linear_2012}.
\end{proof}

We obtain further useful variations of $\mathcal{D}_{G,S}^{{lin}}$ when restricting to other meaningful classes of matrices, still equipped with the normalized rank distance\footnote{Naturally, one can also vary the distance by taking, for example, the normalized operator norm, the Frobenius norm, the $p$-Schatten norm with $1\leqslant p\leqslant\infty$,~etc.}. For example, we use $\mathcal{D}_{G,S}^{{u}}$ when approximate by unitary matrices, $\mathcal{D}_{G,S}^{{sa}}\left( n\right)$ by self-adjoint matrices and $\mathcal{D}_{G,S}^{{nor}}\left( n\right)$ by normal matrices with respect to $d_{\rm{rank}}$. See also Question~\ref{q:srk}.

%%%%%%%%%
\subsection{Weakly sofic groups}\label{sec:ws}
This class of groups has been introduced by Glebsky-Rivera~\cite{glebsky_profinite}.

Let $\mathcal{F}^{fin}$ be the collection of all finite groups $H_\alpha, \alpha\in I,$ each of which is
endowed with a normalized bi-invariant distance $d_\alpha$. For example, such a distance can be induced
by the bi-invariant distances on the ambient groups via Cayley's embeddings $H_\alpha\hookrightarrow \Sym(|H_\alpha|)\hookrightarrow U(|H_\alpha|),$
where $\vert H_\alpha\vert$ denotes the cardinality of $H_\alpha$.

\begin{definition}[Weakly sofic group;  weakly sofic profile and dimension]
A group $G$ is said to be \emph{weakly sofic} if it is approximable by 
$\mathcal F^{fin}=(H_\alpha, d_{\alpha}, \vert H_\alpha\vert, 1)_{\alpha\in I}$, 
in the sense of Definition~\ref{def:app}.

We call \emph{%
weakly sofic  profile }$\mathcal{D}_{G}^{fin}$ and \emph{weakly sofic dimension }$%
\dim_{G}^{fin} $ of a weakly sofic group $G$  the $\mathcal{F}^{fin}$-profile and $\mathcal{F}^{fin}
$-dimension (respectively) for such a choice of the approximating family $\mathcal{F}=\mathcal F^{fin}$. 
\end{definition}

Here, the dimension $k_{\alpha }$ is the cardinality $\vert H_\alpha\vert$
of the finite group $H_{\alpha }$ and $\varepsilon _{\alpha }=1$ for every $\alpha \in I$. It follows from Proposition \ref{Proposition:bounded} that a finitely-generated group has bounded weakly sofic profile if and only if it is finite.

Sofic groups are clearly weakly sofic  and, for a sofic group $G$, we have (`$!$' denotes the factorial):
$$
\mathcal{D}_{G, S}^{fin} (n)\leqslant \mathcal{D}_{G, S}^{sof}(n)!
$$
Linear sofic groups are weakly sofic~\cite[Theorem 8.2]{arzhantseva_linear_2012}.
However,  for a linear sofic group $G$, the exact relationship between 
$\mathcal{D}_{G, S}^{fin} (n)$ and $ \mathcal{D}_{G, S}^{lin}(n)$
 yet remains to establish. See Questions~\ref{q:qsofic}, \ref{q:qlsofic1}, and \ref{q:qlsofic2}. 

An interesting subclass of weakly sofic groups has been introduced by Thom~\cite{thom_about_2012}.
Namely, if $\mathcal F^{fin}_{cc}=(H_\alpha, d^{cc}_{\alpha}, \vert H_\alpha\vert, 1)_{\alpha\in I}$ is a family of finite groups where 
each bi-invariant distance $d^{cc}_{\alpha}$ is in addition \emph{commutator-contractive}, then the famous Higman group $H_4$
is not $\mathcal F^{fin}_{cc}$-approximable~\cite{thom_about_2012}, hence, the corresponding metric profile of the Higman group diverges
(i.e. $\mathcal{D}_{H_4,S}^{\mathcal{F}^{fin}_{cc}}\left( n\right)=+\infty$ for a large enough $n\in \mathbb N$).

A great freedom in the choice of bi-invariant distances $d_\alpha$ on finite groups $H_\alpha, \alpha\in I$ suggests that sofic groups might be a proper subset of weakly sofic groups. This is unknown. It is intriguing that the Hamming distance on symmetric groups $\Sym(n), n\in \mathbb N$ plays a distinguished role: the soficity can be defined with no reference to any distance~\cite{glebskyA}, or accurately speaking, with a reference to the trivial $\{0,1\}$-valued distance $d_{\{0,1\}}$ only, see Example~\ref{ex:constant}. See also Question~\ref{q:wsA}.
%%%%%%
\subsection{Weakly hyperlinear groups}
This class of groups has been introduced by Gismatullin~\cite{jakub_whyp}.

Let $\mathcal{F}^{ct}$ be the collection of all compact groups $H_\alpha, \alpha\in I,$ each of which is
endowed with a normalized bi-invariant distance $d_\alpha$. Examples of such distances are the trivial $\{0,1\}$-valued distance $d_{\{0,1\}}$ and
the (normalized) \emph{Alexandroff-Urysohn distance}\footnote{In a topological group $H$, 
if $\{U_n\}_{n=0}^{\infty}$ is so that for all $h\in H, n\geqslant 0,$ one has $U_n=U_n^{-1}=hU_nh^{-1}\subseteq H$ and $U_0=H, e_H\in U_n, U_{n+1}^3\subseteq U_n$, then
$d_{AU}(g,h)=\inf\{ l(s_1)+\ldots +l(s_k)\mid s_i\in H, gh^{-1}=s_1\cdots s_k\}$, where $l(s_i)=\inf\{2^{-n} \mid s_i\in U_n\}$, is a bi-invariant distance on $H$. For a compact $H$, this distance is continuous, $(H, d_{AU})$ is of finite diameter, and $d_{AU}$ is compatible with the topology if and only if $H$ is first-countable.}
 $d_{\rm{AU}}$ or, in specific compact groups,
the \emph{conjugacy distance} $d_{\rm{conj}}$ induced by the Haar measure on centerless compact groups\footnote{$d_{\rm{conj}}(g,h)=\frac{\log\mu(C(gh^{-1}))}{\log\mu(H)}$, where $C(gh^{-1})$ denotes the conjugacy class of element $gh^{-1}$ for $g,h\in H$, and $\mu$ is the Haar measure on a compact group $H$ with $\mu(H)\not=1$ and $\mu(C(gh^{-1}))\not=0$.} and
the well-known bi-invariant Riemannian distances on compact Lie groups.

\begin{definition}[Weakly hyperlinear group; weakly hyperlinear profile and dimension]
A group $G$ is called \emph{weakly hyperlinear} if it is approximable by 
$\mathcal F^{ct}=(H_\alpha, d_{\alpha}, {\dim }\, H_\alpha, 1)_{\alpha\in I}$, 
in the sense of Definition~\ref{def:app}.

We call \emph{%
weakly hyperlinear  profile }$\mathcal{D}_{G}^{ct}$ and \emph{weakly hyperlinear dimension }$%
\dim_{G}^{ct} $ of a weakly hyperlinear group $G$  the $\mathcal{F}^{ct}$-profile and $\mathcal{F}^{ct}
$-dimension (respectively) for such a choice of the approximating family $\mathcal{F}=\mathcal F^{ct}$. 
\end{definition}

Here, the dimension $k_{\alpha }={\dim }\, H_\alpha$ can be the Lebesgue covering dimension or cohomological dimension 
of the compact group $H_{\alpha }$, and $\varepsilon _{\alpha }=1$ for every $\alpha \in I$. 
Hyperlinear groups are clearly weakly hyperlinear  and, hence, for a hyperlinear group $G$, we have:
$$
\mathcal{D}_{G, S}^{ct} (n)\leqslant \mathcal{D}_{G, S}^{hyp}(n).
$$

%%%%%%%
\subsection{LE-$\mathcal F$ groups}\label{sec:lef}
The introduction of these classes of groups goes back to Malcev~\cite{malcev_linear}, see also a work of Vershik-Gordon~\cite{vershik_gordon}.

Let us consider an arbitrary family $\mathcal{F}_{\{0,1\}}$ of discrete groups, where a
discrete group is endowed with the trivial $\left\{
0,1\right\} $-valued metric $d_{\left\{
0,1\right\}}$ (induced by the length function such that the length of each non-trivial element is 1). In this case, the choice of the parameters $%
\varepsilon _{\alpha }$ is of course irrelevant. We call an $\mathcal{F}_{\{0,1\}}$-approximable group simply an \emph{LE-$\mathcal F$-group}.
For example, the famous class of
groups that are \emph{locally embeddable into finite groups }(LEF) coincides
with the class of $\mathcal{F}^{fin}_{\left\{
0,1\right\}}$-approximable groups, where $\mathcal{F}^{fin}_{\left\{
0,1\right\}}=(H_\alpha, d_{\left\{
0,1\right\}}, \vert H_\alpha\vert, 1)_{\alpha\in I}$ is
the collection of all finite discrete groups each of which is endowed with the trivial metric $d_{\left\{
0,1\right\}}$. When $\mathcal{F}^{a}_{\left\{
0,1\right\}}$ is the
family of all amenable groups endowed with the trivial metric $d_{\left\{
0,1\right\}}$, 
one obtains the notion of  an \emph{initially
subamenable} group or, in other terminology, of  a group \emph{locally embeddable into amenable groups } (LEA).
Accordingly, we have the concepts of the \emph{LE-$\mathcal F$ profile} (also called the LE-$\mathcal F$ growth in this case, see Definition~\ref{def:lefgrowth} below) and dimensions. In particular, we have  the LE-$\mathcal F^{fin}$  and LE-$\mathcal F^{a}$ profiles whenever
the dimensions of groups from $\mathcal{F}^{fin}_{\left\{
0,1\right\}}$ and $\mathcal{F}^a_{\left\{
0,1\right\}}$ are chosen. For instance,  for a finite group one can again take its cardinality and for
an amenable group  its asymptotic dimension or its cohomological dimension. Alternatively, generalizing our approach further,
one can use a non-constant `dimension' function of approximating groups. For instance, for approximating amenable groups one can use their F\o lner functions; we formalize this in more detail below
by introducing the concept of the LEA profile, see Section~\ref{sec:lea}.

%%%%%%%%%%%
\section{Relations to other famous quantifying functions}

Many examples of quantifying functions associated with a given finitely generated group
have been investigated in the literature. Since any group is trivially approximated by itself equipped 
with the trivial metric $d_{\left\{
0,1\right\}}$, we can consider such quantifying functions as very specific instances of
our much more general approach.

Residually finite groups and amenable groups (and, hence, residually amenable groups) are basic examples of groups
which are approximable by families we have mentioned in the preceding subsection. Therefore, functions 
quantifying the residual finiteness and amenability can be used to produce
interesting upper bounds to our metric $\mathcal F$-profiles.

Lower bounds are more difficult to provide as our general setting of $\mathcal F$-approximable groups
encompasses many different classes of groups with a priori very distinct quantifying features.

%%%
\subsection{Growth of balls and the metric profile}
A famous quantifying function associated to every finitely generated group is the \emph{growth} function
$$\beta_{G,S}(n)=\vert B_{G,S}(n)\vert,$$ the cardinality of the ball at the identity of $G$ with respect to the word length distance induced by the generating set $S$.
This function gives a lower bound for an arbitrary $\mathcal F$-profile. 
Indeed, the uniform injectivity condition, condition (2) of Definition~\ref{def:app},  ensures that the ball $B_{G,S}(n)$ is injected into the corresponding
group $G_\alpha$. It remains to estimate from below the minimal dimension $k_\alpha$ such that $G_\alpha$ can have this  fixed ball injected.
Usually, such an estimate of $k_\alpha$ is immediate (although, in general, it depends on the dimension one has chosen for each $G_\alpha$).
For example,  for a sofic group $G$, we have:
$$
\beta_{G,S}(n)\leqslant \mathcal{D}_{G, S}^{fin} (n)\leqslant \mathcal{D}_{G, S}^{sof}(n)!
$$

%%%%%
\subsection{F\o lner function}
This renown function has been introduced by Vershik~\cite{vershik_folner}. Let $G$ be a group with finite symmetric
generating set $S$.  The \emph{F\text{\o} lner function} of $
G$ with respect to $S$, denoted by $\mathrm{F\text{\o} l}_{G,S}\left( n\right) $,  is defined to be the
smallest size $\vert A\vert$ of a  finite subset $A\subseteq G$ with the property that $$\sum_{g\in
B_{G,S}\left( n\right) }\left\vert gA\bigtriangleup A\right\vert \leqslant
\left\vert A\right\vert /n,$$ 
with $\mathrm{F\o l}_{G,S}\left( n\right) =\infty$ whenever there is no such a subset $A\subseteq G$ with respect to $S$;
see \cite{moore_fast_2013}. Such a subset $A$ is called $\frac1n$-\emph{F\o lner set} corresponding to the ball $B_{G,S}\left( n\right).$
It is clear that the $\simeq $-equivalence class $\mathrm{F\o l}_{G}$ of this F\o lner function does not depend
on the chosen finite generating set $S$.

\begin{remark}\label{rem:folner}
There is a great flexibility in the choice of the definition of a F\o lner function as it is in the choice of the definition of
a F\o lner set. For instance, instead of the symmetric difference above one can take $\left\vert gA\setminus A\right\vert$ or 
instead of $g\in B_{G,S}\left( n\right)$ one can assume that $g\in S$, etc. We leave to the reader to check that all these natural variations lead to
$\simeq$-equivalent F\o lner functions and they do not depend on the choice of the finite generating set of $G$. 
\end{remark}

It follows from the proof that amenable groups are sofic---see for instance 
\cite[Example 4.4]{pestov_hyperlinear_2008}---that, for an amenable group $G$, we have:
$$
\mathcal{D}%
_{G,S}^{sof}\left( n\right) \leqslant \mathrm{F\o l}_{G,S}\left( 2n\right). $$ 
This gives $$
\mathcal{D}%
_{G,S}^{lin}\left( n\right) \leqslant \mathrm{F\o l}_{G,S}\left(n\right) \hbox{ and } \mathcal{D}%
_{G,S}^{hyp}\left( n\right) \leqslant \mathrm{F\o l}_{G,S}\left( 4n^2\right).$$

Basic examples of amenable groups include groups of subexponential growth and elementary amenable groups.
Among the latter are virtually nilpotent groups. 
It is not hard to estimate the F\o lner functions of such groups\footnote{We give direct estimates. A more careful study can be done for specific groups and classes of groups.} and, hence, to obtain the upper bounds for 
their metric profiles using the preceding inequalities. On the lower bound see Question~\ref{q:nilp}.

\begin{example}[Groups of subexponential growth]
\label{Example:subexponential}Suppose that $G$ has subexponential growth and 
$S$ is a finite generating set of $G$. Define $a_{n}$ to be the size of  the ball $%
B_{G,S}\left( n\right) $ for $n\in \mathbb{N}$. Since $G$ has subexponential
growth we have that $\lim_{n\to\infty}\sqrt[n]{a_{n}}=1$ and hence $\lim_{n\to\infty}\frac{%
a_{n+1}}{a_{n}}=1$ \cite[Lemma 6.11.1]{ceccherini-silberstein_cellular_2010}%
. Given $n\in \mathbb{N}$, let $m\in \mathbb{N}$ be so that $a_{k+1}/a_{k}\leqslant
\left( 1+\frac{1}{2na_{n}}\right) ^{\frac{1}{n}}$ for every $k\geqslant m$, then
we have that, for every $g\in B_{G,S}\left( n\right) $,%
\begin{equation*}
\frac{\left\vert gB_{G,S}\left( m\right) \backslash B_{G,S}\left( m\right)
\right\vert }{\left\vert B_{G,S}\left( m\right) \right\vert }\leqslant \frac{%
a_{n+m}-a_{n}}{a_{m}}\leqslant \frac{1}{2na_{n}}
\end{equation*}%
and hence 
\begin{equation*}
\frac{1}{\left\vert B_{G,S}\left( m\right)\right\vert }\sum_{g\in B_{G,S}\left( n\right) }\left\vert gB_{G,S}\left( m\right)
\bigtriangleup B_{G,S}\left( m\right) \right\vert \leqslant \frac{1}{n}\text{.}
\end{equation*}%
This shows that, for a group $G$ of subexponential growth, we have:
 $$\mathcal{D}_{G}^{sof}\left( n\right) \preccurlyeq\mathrm{F\o l}_{G}\left( n\right) \leqslant \min \left\{
m : a_{k+1}/a_{k}\leqslant \left(1+\frac{1}{2na_{n}}\right)^{\frac{1}{n}}\text{ for }k\geqslant
m\right\} .$$

\end{example}

\begin{example}[Virtually nilpotent groups]\label{ex:vnF}
\label{Example:virtually}Suppose that $G$ is a virtually nilpotent group
with finite generating set $S$. By Gromov's polynomial growth theorem \cite%
{gromov_groups_1981}, $G$ has polynomial growth. Let $d$ be the order of
polynomial growth of $G$. Then, using the notation of Example \ref%
{Example:subexponential}, for every $\varepsilon >0$ one has, for all
but finitely many $n\in \mathbb{N}$, $n^{d}/2\leqslant a_{n}\leqslant 2n^{d}$.
Therefore, we have that $a_{m+1}/a_{m}\leqslant 4\left( 1+1/m\right) ^{d}$ and $%
\left( 1+(2na_{n})^{-1}\right) ^{\frac{1}{n}}\geqslant \left( 1+n^{-\left(
d+1\right) }/4\right) ^{\frac{1}{n}}$. Hence, we get from Example \ref%
{Example:subexponential} that 
\begin{equation*}
\mathrm{F\o l}_{G,S}\left( n\right) \leqslant \left\lceil 4\left( \left( 1+\frac{1%
}{4n^{d+1}}\right) ^{\frac{1}{dn}}-1\right) ^{-1}\right\rceil \text{.}
\end{equation*}%
This gives that%
\begin{equation*}
\text{\textrm{F\o l}}_{G,S}\left( n\right) \leqslant 2^{dn+4}n^{d+1}\text{.}
\end{equation*}%
Indeed, observe that, for any number $x$,%
\begin{equation*}
\left( 1+2^{-n}x\right) ^{n}\leqslant 1+x
\end{equation*}%
and%
\begin{equation*}
\left( 1+x\right) ^{\frac{1}{n}}\geqslant 1+2^{-n}x\text{.}
\end{equation*}%
Therefore,%
\begin{equation*}
\left( 1+\frac{1}{4n^{d+1}}\right) ^{\frac{1}{dn}}-1\geqslant \frac{1}{%
2^{dn+2}n^{d+1}}
\end{equation*}%
and%
\begin{equation*}
4\left( \left( 1+\frac{1}{4n^{d+1}}\right) ^{\frac{1}{dn}}-1\right)
^{-1}\leqslant 2^{dn+4}n^{d+1}\text{.}
\end{equation*}

It follows that, for a virtually nilpotent group $G$, we have:
 $$\mathcal{D}_{G}^{sof}\left( n\right) \preccurlyeq \mathrm{F%
\o l}_{G}\left( n\right) \leqslant 2^{dn+4}n^{d+1}.$$
\end{example}

The two examples above use the fact that in these groups  subsequences of balls form a collection of F\o lner sets.
Since the growth of balls is prescribed one can erroneously expect to have this prescribed (e.g. subexponential or polynomial) behavior
of the corresponding F\o lner functions. However, our quantitative statements on F\o lner functions require more information than just
the knowledge of the asymptotic type of the growth function or of the classical isoperimetric profile of an amenable group. By Definition~\ref{def:app}, 
given $n\in\mathbb N$, one has to determine an exact dependence between 
the radius $N$ of the $\frac1n$-F\o lner set $B_{G,S}(N)$ corresponding to the ball $B_{G,S}(n)$ and $n$.
This dependence is exponential in our direct estimates above.

%%%%%%
\subsection{LE-$\mathcal F$ growth and LEA profile}\label{sec:lea}
Suppose that $G$ is a residually finite group with a finite generating set $%
S $. Quantifying functions associated with residually finite groups have been
investigated, for example, in \cite%
{bou-rabee_quantifying_2010,bou-rabee_asymptotic_2011,bou-rabee_arbitrarily_2016,bou-rabee_residual_2014,bou-rabee_nilpotent}%
.  The most popular such a function is the \emph{residual finiteness growth} (also called,  the \emph{depth function}),
denoted by $F_{G,S}\left( n\right)$: it is defined to be the least integer $k>0$ such
that for any non-identity element $g\in B_{G,S}\left( n\right) $ there exists
a normal subgroup of $G$ of index at most $k$ that does not contain~$g$.

Our approach is closer in the spirit to another, less known,  function quantifying residual finiteness, the \emph{full residual finiteness growth}
(or the \emph{full depth function}), denoted by $\Phi_{G,S}\left( n\right)$: it is defined to be the least $k$ such that
there exists a normal subgroup $N\unlhd G$ of index at most $k$ that meets $%
B_{G,S}\left( n\right) $ only at the identity. It is clear that the $\simeq $%
-equivalence class of $\Phi_{G,S}(n)$ does not depend from the generating set $S$, and can be
denoted by $\Phi_{G}$. 

By Cayley's theorem, a finite quotient $G/N$ embeds into the symmetric group acting on the quotient itself:
$G/N\hookrightarrow \Sym(\vert G/N\vert)$.
Therefore, for a residually finite group $G$, we have:
$$
\mathcal{D}_{G,S}^{sof}\left( n\right) \leqslant \Phi_{G,S}(n),\quad \mathcal{D}_{G,S}^{hyp}\left(n\right) \leqslant \Phi_{G,S}(n),\quad \mathcal{D}_{G,S}^{lin}\left( n\right) \leqslant \Phi_{G,S}(n)\hbox{ and }\ 
$$
$$
\beta_{G,S}(n)\leqslant \mathcal{D}_{G, S}^{fin} (n)\leqslant \Phi_{G,S}(n).
$$

\begin{example}[Linear / Nilpotent / Virtually abelian groups]\label{ex:linear}
Finitely generated linear groups have at most exponential function $\Phi_{G,S}\left( n\right)$, see~\cite{systolicLin} (where the full residual finiteness growth  is termed the \emph{residual girth} and the \emph{normal systolic growth}). It follows 
 that all the main metric profiles, $\mathcal{D}_{G,S}^{hyp}\left( n\right), \mathcal{D}_{G,S}^{sof}\left( n\right), \mathcal{D}_{G,S}^{lin}\left( n\right)$ and $\mathcal{D}_{G, S}^{fin} (n)$,  are at most exponential. If, in addition, such a  group is not virtually nilpotent then it has at least exponential growth. We conclude that 
$\mathcal{D}_{G, S}^{fin} (n)$ is exponential whenever $G$ is a finitely generated linear group that is not virtually nilpotent. See also Question~\ref{q:lin}.

If $G$ is a finitely generated nilpotent group, then $\beta_{G,S}(n)\simeq \Phi_{G,S}(n)$ if and only if $G$ is virtually abelian~\cite{bou-rabee_nilpotent}. Hence, $\mathcal{D}_{G, S}^{fin} (n)\simeq \beta_{G,S}(n)\simeq \Phi_{G,S}(n)$ if and only if $G$ is virtually abelian. Classical examples of nilpotent groups which are not virtually abelian include discrete Heisenberg groups $H_{2n+1}=H_{2l+1}(\mathbb Z)$ of dimension $2l+1$ with $l\geqslant 1$.  By  the Bass-Guivarc'h formula for growth in terms of the derived series of a finitely generated nilpotent group, $\beta_{H_{2l+1},S}(n)\simeq n^{2l+2}$. The upper and lower central series of $H_{2l+1}$ coincide, then by ~\cite[Theorem 1]{bou-rabee_nilpotent}  we have $\Phi_{H_{2l+1},S}(n)\simeq n^{2(2l+1)}$. That is,  $ n^{2l+2} \preccurlyeq \mathcal{D}_{H_{2l+1}, S}^{fin} (n)\preccurlyeq  n^{2(2l+1)}$.

If $H_{2l+1}$ is $\mathcal F^{fin}$-stable, then by Corollary~\ref{cor:stab}\ref{cor:stab2}, we have $\mathcal{D}_{H_{2l+1}, S}^{fin} (n)\simeq n^{2(2l+1)}$. See Conjecture~\ref{conj:heis} and more generally Conjecture~\ref{conj:nilp}. See also Example~\ref{ex:stablegps}.
\end{example}

The above way of quantifying and the above estimates extend immediately from the class of residually finite groups 
to a more general setting of LE-$\mathcal F$ groups, discussed in Section~\ref{sec:lef}:
one can introduce the corresponding quantifying function $\Phi_{G,S}^\mathcal F\left( n\right)$  as follows.

\begin{definition}[LE-$\mathcal F$ growth]\label{def:lefgrowth}
Let $G$ by a finitely generated group with a finite
generating set $S$. 
The \emph{LE-$\mathcal F$ growth} of $G$ is the function $%
\Phi_{G,S}^{\mathcal{F}}\colon\mathbb{N}\rightarrow \mathbb{N}\cup \left\{
+\infty \right\} $ defined by
$$
\Phi_{G,S}^{\mathcal{F}}\left( n\right)=\min\left\{ {k\in \mathbb{N}} \mid \exists \hbox{ a group }  
G_{\alpha }\in\mathcal F  \hbox{ of size } k_{\alpha }=k \hbox{ with } B_{G,S}(n)\hookrightarrow G_\alpha\right\},
$$
where $B_{G,S}(n)\hookrightarrow G_\alpha$ is a \emph{monomorphism on the ball},
that is, it preserves the algebraic operation of $G$ on the elements from $B_{G,S}(n)$ (i.e. it is a homomorphism on those elements) and it is an injection.
\end{definition}

Observe that this LE-$\mathcal F$ growth is nothing else that our $\mathcal F_{\{0,1\}}$-profile
and, for an LE-$\mathcal F$ group $G$, we have:
$$
\mathcal{D}_{G,S}^{\mathcal F}\left( n\right) \leqslant \Phi_{G,S}^{\mathcal{F}}\left( n\right)=\mathcal{D}_{G,S}^{\mathcal F_{\{0,1\}}}\left( n\right).
$$
This applies to an arbitrarily fixed family $\mathcal F=\{(G_{\alpha},d_{\alpha}, k_{\alpha}, \varepsilon _{\alpha })\}_{\alpha\in I }$.

When $\mathcal F=\mathcal{F}^{fin}_{\left\{
0,1\right\}}$, that is, for an LEF group $G$, the inequalities above  specify to 
$$
\beta_{G,S}(n)\leqslant \mathcal{D}_{G,S}^{fin}\left( n\right) \leqslant \Phi_{G,S}^{\mathcal{F}^{fin}_{\left\{
0,1\right\}}}\left( n\right)=\mathcal{D}_{G,S}^{\mathcal F^{fin}_{\{0,1\}}}\left( n\right).
$$
It would be interesting to find an example of an LEF group with the second inequality being strict, cf. Question~\ref{q:lef}.

When $\mathcal F=\mathcal F^a_{\{0,1\}}$ is a family of amenable groups, every amenable group $G_\alpha\in \mathcal F^a_{\{0,1\}}$
generated by a finite generating set $S_\alpha$
has the associated F\o lner function $\mathrm{F\o l}_{G_\alpha, S_{\alpha}}(n).$ Therefore,
for an $\mathcal F^a_{\{0,1\}}$-approximable group we extend our quantifying viewpoint further. 

\begin{definition}(LEA-$\mathcal F^{a}$ profile)
Let $G$ by a finitely generated group with a finite
generating set $S$. 
The \emph{LEA-$\mathcal F^{a}$ profile} of $G$ is the function $%
LEA_{G,S}^{\mathcal F^{a}}\colon\mathbb{N}\rightarrow \mathbb{N}\cup \left\{
+\infty \right\} $ defined by
$$
LEA_{G,S}^{\mathcal F^{a}}\left( n\right)=\min\left\{ \mathrm{F\o l}_{G_\alpha, S_\alpha}(n) \mid \exists \hbox{ a group }  
G_{\alpha }\in\mathcal F^{a}_{\{0,1\}} \hbox{  
   with } B_{G,S}(n)\hookrightarrow G_\alpha\right\}.
$$
\end{definition} 

This allows to study F\o lner like profile functions for non-amenable groups which possess  `exact' approximations by amenable groups.
If $\mathcal F^{a}_{\{0,1\}}$ consists of \emph{all} amenable groups, we denote the corresponding LEA profile by $LEA_{G,S}\left( n\right)$.
Also, if $\mathcal F^{a}_{\{0,1\}}$ consists of amenable quotients of $G$, 
this gives the \emph{residually amenable profile}, denoted by $RA_{G,S}(n)$.
Without loss of generality, we assume no any relationship between the generating sets $S$ and $S_\alpha$. Also,
the assumption on the finiteness of $S_\alpha, \alpha\in I$, is not necessary. If the approximating groups $G_\alpha$ are not finitely generated,
given a group $G_\alpha$ with  $B_{G,S}(n)\hookrightarrow G_\alpha$, we can consider
a subgroup of $G_\alpha$ generated by finitely many images under this injection of elements from $B_{G,S}(n)$ and take the F\o lner function of this finitely generated subgroup of $G_\alpha$.  Properties of usual F\o lner functions immediately extend to our LEA profile and RA profile. In particular,
 the $\simeq $-equivalence class of $LEA_{G,S}$, respectively of $RA_{G,S}$, does not depend on the choice of the finite
generating set $S$.

For a  group $G$, and a family $\mathcal F\supseteq \mathcal F^{a}\supseteq\{\hbox{ amenable quotients of } G\}$, we have:
$$
\mathcal{D}_{G,S}^{\mathcal F}\left( n\right)\leqslant LEA_{G,S}\left( n\right)\leqslant RA_{G,S}(n)\leqslant \mathrm{F\o l}_{G, S}(n),
$$
where $\mathrm{F\o l}_{G, S}(n)=\infty$ whenever $G$ is non-amenable and the dimensions of $G_\alpha\in \mathcal F^{a}$ approximating $B_{G,S}(n)$ satisfy $k_\alpha\leqslant 
\mathrm{F\o l}_{G_\alpha, S_\alpha}(n).$ See also Question~\ref{q:ra}.

Subsequent to the present work and partially on the suggestion of the first author, the functions LE-$\mathcal F^{fin}$, $LEA_{G,S}\left( n\right)$ and $RA_{G,S}(n)$
have been also studied in~\cite{federico} and~\cite{matteo}.

%%%%%%%%%%%%%%%%
\subsection{Metric profiles of stable metric approximations}\label{subsec:stab}
As above, $G$ denotes a group generated by a finite set $S=X\sqcup X^{-1}$, with, say, $X=\{x_1, \ldots, x_m\}$, subject to a finite set of relators $R\subseteq F_m=F_X$ and
$\mathcal{F}=\left( G_{\alpha }, d_{\alpha }, k_{\alpha }, \varepsilon _{\alpha }\right)
_{\alpha \in I}$ is an approximating family of $G$. 

If $r\in F_m$ and $g_1,\ldots, g_m$ are elements in a group $H$, we denote by $r(g_1,\ldots, g_m)\in H$ the image of $r$ under the unique group homomorphism 
$F_m\to H$ such that $x_i\mapsto g_i$.

\begin{definition}[Solution and almost solution]
Elements $g_1^\alpha, \ldots, g_m^\alpha\in G_\alpha$ are a \emph{solution} of $R$ in $G_\alpha$  if
\begin{equation*}\label{eq:sys}
r(g_1^\alpha, \ldots, g_m^\alpha)=e_\alpha, \, \forall r\in R,
\end{equation*}
 where $e_\alpha$ denotes the identity of $G_\alpha$.

Elements $g_1^\alpha, \ldots, g_m^\alpha\in G_\alpha$ are a \emph{$\delta$-solution} of $R$ in $G_\alpha$, for some $\delta>0$, if
\begin{equation*}\label{eq:almost}
d_\alpha\left(r(g_1^\alpha, \ldots, g_m^\alpha),e_\alpha\right)<\delta, \, \forall r\in R.
\end{equation*}
\end{definition}

The following notion is due to Arzhantseva-P\u{a}unescu~\cite{arzhantseva_almost_2015}, see also~\cite{arzhantseva_centr_2016} for a more general setting.

\begin{definition}[$\mathcal F$-stable groups]\label{def:stab}
The set $R$ is \emph{$\mathcal F$-stable} if $\forall \varepsilon>0\, \exists \delta>0 \, \forall \alpha\in I\, \forall g_1, \ldots, g_m\in G_\alpha$ a $\delta$-solution
of $R$, there exist $\tilde g_1, \ldots, \tilde g_m \in G_\alpha$ a solution of $R$ such that $d_\alpha(g_i, \tilde g_i)<\varepsilon.$

The group $G = F_m/ \langle R\rangle$ is called \emph{$\mathcal F$-stable} if its set of relator words $R$ is $\mathcal F$-stable.
\end{definition}

The definition of $\mathcal F$-stability does not depend on the particular choice of finite presentation of the
group: Tietze transformations preserve stability as the metrics $d_\alpha$ are bi-invariant, see~\cite[Section 3]{arzhantseva_almost_2015}.
The following theorem is due to Arzhantseva-P\u{a}unescu, cf.~\cite[Definition 4.1, Theorem 4.2 and lines after it, Theorem 4.3]{arzhantseva_almost_2015}, see again~\cite{arzhantseva_centr_2016} for a more general variant.

\begin{theorem}\label{thm:stab}
Assume that $(\varepsilon_\alpha)_{\alpha\in I}$ is constantly equal to a real number $\varepsilon>0$. Then the following holds.
\begin{itemize}
\item[(1)] The set $R$ is $\mathcal F$-stable if and only if any group homomorphism $\iota\colon G\rightarrow \prod_{%
\mathcal{U}}\left( G_{\alpha },d_{\alpha }\right) $ lifts to $\prod_{\alpha \in I}G_{\alpha}$.
\item[(2)]  If $G$ is $\mathcal F$-approximable and $\mathcal F$-stable, then $G$ is LE-$\mathcal F$, equivalently, fully residually $\mathcal F$.
\end{itemize}
\end{theorem}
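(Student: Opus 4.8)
The plan is to establish part (1) and then read off part (2) from its forward implication together with the ultraproduct description of $\mathcal F$-approximability recalled in Section~\ref{sec:multra}. Write $p\colon \prod_{\alpha\in I}G_\alpha\to\prod_{\mathcal U}(G_\alpha,d_\alpha)$ for the canonical quotient map. The key elementary observation is that, since $G=\langle X\mid R\rangle$, specifying a homomorphism from $G$ into any group $H$ is the same as specifying a tuple $(h_1,\dots,h_m)\in H^m$ with $r(h_1,\dots,h_m)=e_H$ for every $r\in R$; in particular a homomorphism $G\to\prod_{\alpha}G_\alpha$ is precisely a choice, for each $\alpha$, of a genuine solution of $R$ in $G_\alpha$, and the trivial tuple $(e_\alpha,\dots,e_\alpha)$ is always one. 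Note already that part (2) will use only the forward implication of (1).

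For the forward implication of (1), assume $R$ is $\mathcal F$-stable and let $\iota\colon G\to\prod_{\mathcal U}(G_\alpha,d_\alpha)$ be a homomorphism. I would pick representatives $g_i^\alpha\in G_\alpha$ with $p((g_i^\alpha)_\alpha)=\iota(x_i)$. Because each relator maps to the identity of the ultraproduct and $R$ is finite, the defect $\delta(\alpha):=\max_{r\in R}d_\alpha(r(g_1^\alpha,\dots,g_m^\alpha),e_\alpha)$ tends to $0$ along $\mathcal U$. Now feed the stability modulus into an exhaustion: for each $n$, stability (Definition~\ref{def:stab}) yields $\delta_n>0$ such that every $\delta_n$-solution is within $1/n$ of a genuine solution, and the sets $A_n:=\{\alpha:\delta(\alpha)<\delta_n\}$ lie in $\mathcal U$. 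Selecting, for each $\alpha$, a genuine solution $(\tilde g_i^\alpha)_i$ as close to $(g_i^\alpha)_i$ as this modulus permits (and the trivial solution where no bound is available) gives $\lim_{\alpha\to\mathcal U}d_\alpha(\tilde g_i^\alpha,g_i^\alpha)=0$ for each $i$. The tuple $(\tilde g_i^\alpha)_\alpha$ is an exact solution of $R$ coordinatewise, hence defines $\tilde\iota\colon G\to\prod_\alpha G_\alpha$, and $p\circ\tilde\iota=\iota$ since the chosen distances vanish along $\mathcal U$. The only work is the $\varepsilon$--$\delta$ bookkeeping against $\mathcal U$, which I expect to be routine.

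For the converse I would argue contrapositively. If $R$ is not $\mathcal F$-stable there are $\varepsilon_0>0$ and, for each $k$, an index $\alpha_k$ together with a $1/k$-solution of $R$ in $G_{\alpha_k}$ lying at distance at least $\varepsilon_0$ from every genuine solution. Placing these bad tuples at the witnessing indices, the trivial solution elsewhere, and choosing $\mathcal U$ so that the set of witnessing indices is $\mathcal U$-large, produces a homomorphism $\iota\colon G\to\prod_{\mathcal U}(G_\alpha,d_\alpha)$ with the following property: any genuine homomorphism lifting it would have to be $\varepsilon_0$-close to the chosen representatives on a set in $\mathcal U$, yet its coordinates are exact solutions while those representatives are $\varepsilon_0$-far from all exact solutions there, a contradiction. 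Arranging the witnesses on a set of $\mathcal U$ is the only genuinely delicate step, and I regard it as the main obstacle of part (1); since the forward implication already holds for every ultrafilter, this converse is not needed for part (2).

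For part (2), assume $G$ is $\mathcal F$-approximable and $\mathcal F$-stable. As $(\varepsilon_\alpha)_\alpha$ is constantly $\varepsilon$, the characterization recalled in Section~\ref{sec:multra} provides a homomorphism $\iota\colon G\to\prod_{\mathcal U}(G_\alpha,d_\alpha)$ with $d_{\mathcal U}(\iota(g),\iota(h))\geqslant\varepsilon$ for all distinct $g,h\in G$ (by bi-invariance it suffices to bound nontrivial differences away from the identity). Stability and the forward implication of (1) let me lift $\iota$ to a genuine homomorphism $\tilde\iota\colon G\to\prod_\alpha G_\alpha$; composing with the coordinate projections $\pi_\alpha$ produces honest group homomorphisms $\varphi_\alpha:=\pi_\alpha\circ\tilde\iota\colon G\to G_\alpha$ satisfying $d_{\mathcal U}(\iota(g),\iota(h))=\lim_{\alpha\to\mathcal U}d_\alpha(\varphi_\alpha(g),\varphi_\alpha(h))$. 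Fix a finite set $F\subseteq G$. For distinct $g,h\in F$ the set $\{\alpha:d_\alpha(\varphi_\alpha(g),\varphi_\alpha(h))\geqslant\varepsilon/2\}$ belongs to $\mathcal U$, hence so does the finite intersection of these sets over all pairs in $F$; any $\alpha$ in this intersection gives a genuine homomorphism $\varphi_\alpha\colon G\to G_\alpha$ injective on $F$. Taking $F=B_{G,S}(n)$ for every $n$ exhibits monomorphisms on the balls into members of $\mathcal F$, which is exactly the LE-$\mathcal F$ (equivalently, fully residually $\mathcal F$) condition. The crux of part (2) is thus the passage from a single point of the metric ultraproduct to a whole family of \emph{exact} coordinate homomorphisms, which is precisely what the lifting from part (1) supplies; uniform injectivity of $\iota$ then finishes the argument.
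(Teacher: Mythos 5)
The paper does not prove Theorem~\ref{thm:stab} at all: it is imported by citation from Arzhantseva--P\u{a}unescu \cite{arzhantseva_almost_2015}, so there is no in-text argument to compare against and your proposal has to stand on its own. On that basis, the forward implication of (1) and all of part (2) are correct and essentially the standard argument: lifting generators to representatives, observing that the relator defect vanishes along $\mathcal U$ because $R$ is finite, correcting coordinatewise by stability (the bookkeeping with a decreasing sequence $\delta_n$ and the sets $A_n\in\mathcal U$ is indeed routine), and then, for (2), pushing the lifted homomorphism through the coordinate projections and using uniform injectivity of $\iota$ together with closure of $\mathcal U$ under finite intersections to get, for each ball $B_{G,S}(n)$, a genuine homomorphism $G\to G_\alpha$ injective on that ball. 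This matches the paper's definition of ``lifts'' (exact solutions $g^i_\alpha$ with $\iota(x_i)=\prod_{\mathcal U}g^i_\alpha$) and its definition of LE-$\mathcal F$ via monomorphisms on balls, and you correctly isolate that only the forward half of (1) feeds into (2).

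There is, however, a genuine gap in your converse of (1), precisely at the step you flag as delicate. Negating Definition~\ref{def:stab} gives $\varepsilon_0>0$ and, for each $k$, an index $\alpha_k$ carrying a $1/k$-solution that is $\varepsilon_0$-far from every exact solution, but nothing forces the set $\{\alpha_k\}_k$ to be infinite. If a single index $\alpha^\ast$ witnesses the failure for arbitrarily small $\delta$, you can place only one bad tuple in the coordinate $G_{\alpha^\ast}$, whose defect is a fixed positive number, and no nonprincipal ultrafilter on $I$ concentrates on the finite witness set; your construction then produces no non-liftable homomorphism. Indeed, with the literal reading of the statement (one ultraproduct over the fixed index set $I$), the equivalence can fail: take $G_{\alpha^\ast}$ internally unstable and all other $G_\alpha$ trivial, so that every homomorphism into $\prod_{\mathcal U}(G_\alpha,d_\alpha)$ is trivial and lifts trivially while $R$ is not $\mathcal F$-stable. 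The fix, as in \cite{arzhantseva_almost_2015}, is to let the ultraproducts range over \emph{sequences} drawn from $\mathcal F$ with repetitions allowed (equivalently, to pass to an ultrapower of $G_{\alpha^\ast}$, placing the $1/k$-solution in the $k$-th copy); your argument needs this reinterpretation, or the case distinction, to go through. Since part (2) uses only the forward implication, this does not affect the rest of your write-up.
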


A homomorphism $\iota$ is not necessarily injective and the lifting property means that there exists 
$g^i_\alpha\in G_{\alpha}, i = 1,\ldots,m$ such that $\{g^1_\alpha,\ldots,g^m_\alpha\}$ is a solution 
of $R$ for  any $\alpha\in I$ and $\iota ( x_i ) = \prod_{%
\mathcal{U}}g^i_\alpha,$ see~\cite[Definition 4.1]{arzhantseva_almost_2015}.
The equivalence between the LE-$\mathcal F$ property and the fully residually $\mathcal F$ property is because, in this section, $G$
is assumed to be finitely presented.

\begin{corollary}\label{cor:stab}
Under the hypothesis above we have the following inequalities.
\begin{enumerate}[label=(\arabic*)]
\item  If the set $R$ is $\mathcal F$-stable, then  
$$
\mathcal{D}_{G,S}^{\mathcal F_{\{0,1\}}}\left( n\right) \leqslant \mathcal{D}_{G,S}^{\mathcal F}\left( n\right).
$$
\item If $G$ is $\mathcal F$-approximable and $\mathcal F$-stable, then 
$$
\Phi_{G,S}^{\mathcal{F}}\left( n\right)\simeq\mathcal{D}_{G,S}^{\mathcal F_{\{0,1\}}}\left( n\right)\simeq\mathcal{D}_{G,S}^{\mathcal F}\left( n\right).
$$
\label{cor:stab2}
\end{enumerate}
\end{corollary}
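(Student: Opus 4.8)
The plan is to reduce part (2) to part (1) and to prove (1) by upgrading a metric $\mathcal F$-approximation to an \emph{exact} homomorphism into the same approximating group, using $\mathcal F$-stability to pass from an almost-solution of $R$ to a genuine one. For the reduction, recall that by Definition~\ref{def:lefgrowth} one has $\Phi_{G,S}^{\mathcal F}(n)=\mathcal D_{G,S}^{\mathcal F_{\{0,1\}}}(n)$ identically, so the first $\simeq$ in (2) is an equality, while the inequality $\mathcal D_{G,S}^{\mathcal F}(n)\leqslant \mathcal D_{G,S}^{\mathcal F_{\{0,1\}}}(n)$ was already recorded before Definition~\ref{def:lefgrowth}. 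Hence (2) follows the moment (1) furnishes the reverse bound $\mathcal D_{G,S}^{\mathcal F_{\{0,1\}}}(n)\leqslant \mathcal D_{G,S}^{\mathcal F}(n)$; Theorem~\ref{thm:stab}(2) guarantees that $G$ is LE-$\mathcal F$, so all three functions are finite and the comparison is meaningful.

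First I would fix $n$ and, for a level $N\geqslant n$ to be chosen later, take an $(N,\varepsilon_\alpha)$-approximation $\pi\colon G\to G_\alpha$ with $k_\alpha=\mathcal D_{G,S}^{\mathcal F}(N)$. Restricting to the generators and extending yields $\varphi\colon F_X\to G_\alpha$ with $\varphi(x_i)=\pi(x_i)$. Since $R$ is finite, every relator has length at most some $L=L(R)$, so Lemma~\ref{Lemma:approximate-homomorphism}(5), applied with $F=B_{G,S}(N)$, gives $d_\alpha(\varphi(r),e_\alpha)<(3L-1)/N$ for each $r\in R$; thus $(\varphi(x_1),\dots,\varphi(x_m))$ is a $\delta$-solution of $R$ with $\delta=\delta(N)\to 0$. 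Feeding a tolerance $\varepsilon>0$ into $\mathcal F$-stability produces a genuine solution $(\tilde g_1,\dots,\tilde g_m)$ of $R$ with $d_\alpha(\varphi(x_i),\tilde g_i)<\varepsilon$, and $x_i\mapsto\tilde g_i$ defines an honest homomorphism $\psi\colon G\to G_\alpha$ of the same dimension $k_\alpha$. As $\psi$ is a homomorphism it meets condition~(1) of an $\mathcal F_{\{0,1\}}$-approximation exactly, so only injectivity on $B_{G,S}(n)$ remains.

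The injectivity step is the technical heart. For $g\in B_{G,S}(n)$ pick a word $w_g$ of length $\leqslant n$ representing it; replacing its letters one at a time and using bi-invariance of $d_\alpha$ gives $d_\alpha(\psi(g),\varphi(w_g))\leqslant |w_g|\,\varepsilon\leqslant n\varepsilon$, while Lemma~\ref{Lemma:approximate-homomorphism}(5) gives $d_\alpha(\pi(g),\varphi(w_g))<(3n-1)/N$. Combining these with the uniform-injectivity clause $d_\alpha(\pi(g),\pi(h))>\varepsilon_\alpha-1/N$ for distinct $g,h\in B_{G,S}(n)$ yields $d_\alpha(\psi(g),\psi(h))>\varepsilon_\alpha-2n\varepsilon-(6n+1)/N$, which is strictly positive --- so $\psi(g)\neq\psi(h)$ and $\psi$ is a monomorphism on the ball --- provided $\varepsilon<\varepsilon_\alpha/(4n)$ and $N$ is large enough. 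This produces $\mathcal D_{G,S}^{\mathcal F_{\{0,1\}}}(n)\leqslant k_\alpha=\mathcal D_{G,S}^{\mathcal F}(N)$.

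The hard part is the uniformity hidden in that last sentence. Separating \emph{all} of $B_{G,S}(n)$ forces $\varepsilon\sim\varepsilon_\alpha/n$, so the stability modulus one may invoke is $\delta(\varepsilon_\alpha/(4n))$, and the chosen approximation must be fine enough that its relator-error $(3L-1)/N$ falls below it; that is, one must take $N=N(n)$ with $N\succcurlyeq\max\{n,\,1/\delta(\varepsilon_\alpha/(4n))\}$. The resulting bound $\mathcal D_{G,S}^{\mathcal F_{\{0,1\}}}(n)\leqslant \mathcal D_{G,S}^{\mathcal F}(N(n))$ collapses to the stated pointwise inequality, and hence to the $\simeq$ of (2), precisely when $N(n)\preccurlyeq n$, i.e.\ when the modulus of $\mathcal F$-stability is \emph{linear}, $\delta(\varepsilon)\succcurlyeq\varepsilon$. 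I would therefore isolate this linear-modulus property as the crux and verify it for the families in play --- it holds for the Hamming-distance (commutator) stability results invoked for the examples --- while noting that the purely qualitative conclusion that such a $\psi$ exists, i.e.\ that $G$ is LE-$\mathcal F$, is already delivered by Theorem~\ref{thm:stab}(2).
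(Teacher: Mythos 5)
Your reduction of (2) to (1) is exactly right, and your finitary argument for (1) --- restrict an $(N,\varepsilon_\alpha)$-approximation to the generators, observe via Lemma~\ref{Lemma:approximate-homomorphism} that the finitely many relators are $\delta(N)$-satisfied with $\delta(N)\to 0$, invoke Definition~\ref{def:stab} to perturb to an exact solution in the \emph{same} $G_\alpha$, and then verify injectivity of the resulting homomorphism on $B_{G,S}(n)$ by the triangle inequality --- is the natural route. The paper gives no written proof of Corollary~\ref{cor:stab}, presenting it as an immediate consequence of Theorem~\ref{thm:stab}, so this is the argument one is meant to supply. Your intermediate estimates ($d_\alpha(\psi(g),\varphi(w_g))\leqslant n\varepsilon$ by bi-invariance, $d_\alpha(\pi(g),\varphi(w_g))<(3n-1)/N$ from part (5) of the lemma, and the resulting lower bound on $d_\alpha(\psi(g),\psi(h))$) are all correct.

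The caveat you isolate at the end is a genuine point rather than a defect of your write-up. Definition~\ref{def:stab} is purely qualitative ($\forall\varepsilon\,\exists\delta$), so the argument produces an exact ball-monomorphism only inside a $G_\alpha$ carrying an $(N,\varepsilon_\alpha)$-approximation with $N\succcurlyeq\max\left\{n,\,1/\delta(\varepsilon/(4n))\right\}$; what one actually proves is $\mathcal D^{\mathcal F_{\{0,1\}}}_{G,S}(n)\leqslant\mathcal D^{\mathcal F}_{G,S}(N(n))$. Since $\mathcal D^{\mathcal F}_{G,S}$ is non-decreasing, this does not collapse to the pointwise inequality claimed in (1), and it yields the $\simeq$ of (2) only when $N(n)\preccurlyeq n$, i.e.\ when the stability modulus is at worst linear. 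The ultraproduct formulation in Theorem~\ref{thm:stab}(1) does not repair this: the lift gives exact solutions of $R$ in $G_\alpha$ for $\mathcal U$-many $\alpha$, but with no control tying $k_\alpha$ to $\mathcal D^{\mathcal F}_{G,S}(n)$ rather than to $\mathcal D^{\mathcal F}_{G,S}(N)$ for large $N$; qualitatively it only delivers finiteness of $\Phi^{\mathcal F}_{G,S}$. So either the corollary should be read with its right-hand sides evaluated at $N(n)$, or the applications in Example~\ref{ex:stablegps} must, as you propose, be checked against a quantitative (linear) modulus for the specific stability results being cited. You have not missed an idea; you have located the one the paper leaves implicit.
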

In particular, for an  $\mathcal F^{sof}$-stable sofic group $G$ one has
$$
\Phi_{G,S}\left( n\right)\simeq\mathcal{D}_{G,S}^{sof}\left( n\right).$$ 
Similarly, for an $\mathcal F^{hyp}$-stable hyperlinear, respectively $\mathcal F^{lin}$-stable linear sofic, and yet, respectively $\mathcal F^{fin}$-stable weakly sofic group $G$ one has
$
\Phi_{G,S}\left( n\right)\simeq\mathcal{D}_{G,S}^{hyp}\left( n\right),$ respectively $\Phi_{G,S}\left( n\right)\simeq\mathcal{D}_{G,S}^{lin}\left( n\right),$ and yet, respectively $\Phi_{G,S}\left( n\right)\simeq\mathcal{D}_{G,S}^{fin}\left( n\right).$ Here we imply that, under the hypothesis of Corollary~\ref{cor:stab}\ref{cor:stab2}, the corresponding LE-$\mathcal F$ growths coincide, e.g. 
$\Phi_{G,S}^{\mathcal{F}^{sof}}(n)\simeq \Phi_{G,S}(n)$, etc.

The assumption on finite presentation of $G$ can be relaxed, cf.~\cite[Remark 2.10]{arzhantseva_centr_2016}. Moreover, the above statements
can be generalized to a wider context of \emph{constraint} metric approximations~\cite{arzhantseva_centr_2016}. 

Corollary~\ref{cor:stab}  allows us to explicit various metric profiles for groups known to be stable. 
\begin{example}[$\mathcal F$-profiles of $\mathcal F$-stable groups]\label{ex:stablegps}\strut 

(1) Let $G$ be a finitely generated virtually abelian group and ${\rm rank}\,G$ denote the rank of any finite index free-abelian subgroup of $G$. Then $$\mathcal{D}_{G,S}^{sof}\left( n\right)\simeq\mathcal{D}_{G,S}^{hyp}\left( n\right)\simeq\mathcal{D}_{G,S}^{fin}\left( n\right) \simeq n^{{\rm rank}\, G}.$$  Indeed, we use~\cite{arzhantseva_almost_2015} for $\mathcal F^{sof}$-stability,~\cite{glebsky2010commuting} for $\mathcal F^{hyp}$-stability,  Example~\ref{ex:linear} for  $\mathcal{D}_{G,S}^{fin}\left( n\right)$,~\cite{bou-rabee_nilpotent} for $\Phi_{G,S}\left( n\right)\simeq n^{{\rm rank}\,G}$ (or simply argue as in Example~\ref{ex:ab}) and Corollary~\ref{cor:stab}\ref{cor:stab2} together with Proposition~\ref{Proposition:finite-index}\ref{i:fi} for conclusion. See Conjecture~\ref{conj:ablin} on $\mathcal{D}_{G,S}^{lin}\left( n\right)$.

(2) Let $G$ be a finitely generated virtually nilpotent group so that a finite-index nilpotent subgroup $H\leqslant G$ satisfies $[Z(H):\gamma_c(H)]<\infty$, where $Z(H)$ and $\gamma_c(H)$ denote  the center and the last nontrivial term of the lower central series of $H$ (i.e. $c$ is the nilpotency class of $H$).  Let $\dim H$ denote the number of infinite cyclic factors in a composition series of $H$ with cyclic factors.
Then $$\mathcal{D}_{G,S}^{sof}\left( n\right)\simeq n^{\dim H}.$$ Indeed, we apply Proposition~\ref{Proposition:finite-index}\ref{i:fi} to restrict to a nilpotent group $H$ as above, then~\cite{oren_irs} for $\mathcal F^{sof}$-stability,~\cite{bou-rabee_nilpotent} for $\Phi_{H,S}\left( n\right)\simeq n^{\dim H}$ and Corollary~\ref{cor:stab}\ref{cor:stab2} for conclusion. In particular, for the $(2l+1)$-dimensional Heisenberg group we have $\mathcal{D}_{H_{2l+1},S}^{sof}\left( n\right) \simeq n^{2(2l+1)}$.

The assumption $[Z(H):\gamma_c(H)]<\infty$ is required by \cite[Theorem 1]{bou-rabee_nilpotent}.  For an arbitrary virtually nilpotent group, analogous conclusions hold  using~\cite[Theorem 2]{bou-rabee_nilpotent} which provides a polynomial upper bound on $\Phi_{G,S}\left( n\right)$, and hence, by Corollary~\ref{cor:stab}\ref{cor:stab2}, on the sofic profile $\mathcal{D}_{G,S}^{sof}\left( n\right)$. This improves the upper bound from Example~\ref{ex:vnF}.

Furthermore, all virtually \{polycyclic-by-finite\} groups which are not virtually nilpotent have exponential $\mathcal{D}_{G,S}^{sof}\left( n\right)$. For we use Proposition~\ref{Proposition:finite-index}\ref{i:fi} to restrict to a polycyclic-by-finite group $H$, then~\cite{oren_irs} for $\mathcal F^{sof}$-stability,~\cite{systolicLin}  for $\Phi_{H,S}\left( n\right)\simeq 2^n$ (note that every polycyclic-by-finite group is linear~\cite[Section 5.C]{segal_polyc}) and Corollary~\ref{cor:stab}\ref{cor:stab2} for conclusion.

On the hyperlinear profile, we have $\mathcal{D}_{H_{2l+1},S}^{hyp}\left( n\right) \simeq n^{2(2l+1)}$ for the $(2l+1)$-dimensional Heisenberg group, since it is $\mathcal F^{hyp}$-stable~\cite{hadwin_HS} (the argument extends from 3-dimensional to $(2l+1)$ dimensional Heisenberg group) and $\Phi_{H_{2l+1},S}\left( n\right)\simeq n^{2(2l+1)}$~\cite[Theorem 1]{bou-rabee_nilpotent}. 

See Conjecture~\ref{conj:heis} on other metric profiles of the Heisenberg groups and Conjecture~\ref{conj:nilp} on arbitrary finitely generated virtually nilpotent groups.
\end{example}

%%%
\subsection{Other metric profiles}  Subsequent to our work other metric profiles have been introduced, with alternative (not equivalent!) to Definition~\ref{def:profile} formulations, and restricted to certain metric approximations. Notably, a sofic profile was introduced in~\cite{cornulier:cremona} and, by analogy, a hyperlinear profile in~\cite{hyperProf}. In both cases, 
the formulation is `transversal' to our line of thought. Indeed, one can parametrize an $\mathcal F$-approximation by two parameters $m$ and $n$ (instead of one parameter $n$ in Definition~\ref{def:app}): $m$ being the radius of the ball to be approximated and $1/n$ being the constant involved in the definitions of almost homomorphism and uniform injectivity on that ball. This can seem to give a greater flexibility but in fact provides an equivalent definition of an $\mathcal F$-approximation. Even so, this yields distinct approaches to quantifying when one prescribes different constraints on the corresponding quantifying function $\mathcal{D}_{G,S}^{\mathcal{F}}\left( m, n\right) $ of two variables. Our Definition~\ref{def:app}, and hence Definition~\ref{def:profile}, take $m=n$ so that we consider the values of the quantifying function on the diagonal of $m$-$n$ plane (viewing, for example, $m$ on the horizontal and $n$ on the vertical axes). In contrast, both~\cite{cornulier:cremona} and~\cite{hyperProf} choose to fix $m$ and consider such a function on the vertical $n$-line, when $n$ varies. A careful reader is invited to pay attention to such differences and variations in the existing terminology. See also the notion of sofic dimension~\cite{dykema_sd} which mirrors an `orthogonal' concept of subgroup growth we alluded to in the introduction.
 
%%%%%%%
\section{Metric profile and group-theoretical constructions}

In this section, we observe how  the $\mathcal{F}$-profile behave with respect to
various group-theoretical constructions such as taking subgroups, direct and free products, and restricted wreath products. 

\subsection{Subgroups}
It follows from Definition~\ref{def:app} that every subgroup of an $\mathcal F$-approximable group is $\mathcal F$-approximable and from
definition of an LE-$\mathcal F$ group in Section~\ref{sec:lef} that a subgroup of an LE-$\mathcal F$ group is an LE-$\mathcal F$ group.
Quantifying these statements we obtain the following easy but instructive result.

\begin{proposition}
\label{Proposition:finite-index} Let $G\ $ be a finitely generated $\mathcal F$-approximable group and $H$ be a finitely generated subgroup of $G$. Then the following holds.
\begin{enumerate}[label=(\roman*)]
\item $\mathcal{D}%
_{H}^{\mathcal F}\leqslant \mathcal{D}_{G}^{\mathcal F}$. \label{i:sg}
\item If $H$ has finite
index in $G$, then $\mathcal{D}_{G}^{sof}\simeq \mathcal{D}_{H}^{sof}, \mathcal{D}_{G}^{hyp}\simeq \mathcal{D}_{H}^{hyp},$ and
$\mathcal{D}_{G}^{lin}\simeq \mathcal{D}_{H}^{lin}$.\label{i:fi}
\item If $G$ is an LE-$\mathcal F$ group, then $\Phi_{H}^{\mathcal{F}}\left( n\right)\leqslant \Phi_{G}^{\mathcal{F}}\left( n\right).$\label{prop:fii3}
\item
If $G$ is an LEA group, then $LEA_H(n)\leqslant LEA_G(n).$
\end{enumerate}

\end{proposition}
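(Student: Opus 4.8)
The plan is to exploit the same restriction mechanism used in part~\ref{i:sg}, but now at the level of the ambient amenable witnesses rather than the approximating family. First I would fix a finite generating set $S_H$ of $H$ and extend it to a finite generating set $S_G\supseteq S_H$ of $G$; this is possible since $H\leqslant G$ is finitely generated and $G$ is finitely generated. With this choice every $S_H$-word is also an $S_G$-word, so $|h|_{S_G}\leqslant |h|_{S_H}$ for $h\in H$, and hence
\[
B_{H,S_H}(n)\subseteq B_{G,S_G}(n)\qquad\text{for every }n\in\mathbb N.
\]

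The key observation is that a monomorphism on the ball, in the sense of Definition~\ref{def:lefgrowth}, restricts to a monomorphism on any sub-ball. Concretely, suppose $\varphi\colon B_{G,S_G}(n)\hookrightarrow G_\alpha$ witnesses $LEA_{G,S_G}(n)$, so that $G_\alpha$ is amenable, $\varphi$ is injective on $B_{G,S_G}(n)$, and $\varphi(gh)=\varphi(g)\varphi(h)$ whenever $g,h,gh\in B_{G,S_G}(n)$. Since $B_{H,S_H}(n)\subseteq B_{G,S_G}(n)$, the restriction $\varphi|_{B_{H,S_H}(n)}$ is still injective, and for $g,h\in B_{H,S_H}(n)$ with $gh\in B_{H,S_H}(n)$ all three elements lie in $B_{G,S_G}(n)$, so the homomorphism relation is inherited. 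Thus $\varphi|_{B_{H,S_H}(n)}\colon B_{H,S_H}(n)\hookrightarrow G_\alpha$ is again a monomorphism on the ball into the \emph{same} amenable group.

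I would then conclude by comparing the two minima. Because the same pair $(G_\alpha,S_\alpha)$ that embeds $B_{G,S_G}(n)$ also embeds $B_{H,S_H}(n)$, every competitor in the minimum defining $LEA_{G,S_G}(n)$ is also a competitor in the minimum defining $LEA_{H,S_H}(n)$, while the quantity being minimized, $\mathrm{F\o l}_{G_\alpha,S_\alpha}(n)$, is unchanged. Minimizing over a larger set of witnesses can only decrease the value, so $LEA_{H,S_H}(n)\leqslant LEA_{G,S_G}(n)$ for every $n$. Since the $\simeq$-equivalence class of the LEA profile does not depend on the chosen finite generating set, this pointwise inequality for the compatible pair $(S_H,S_G)$ yields $LEA_H\preccurlyeq LEA_G$, as required; the identical scheme reproves \ref{prop:fii3} with $\Phi^{\mathcal F}$ in place of the F\o lner data.

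The only point requiring a little care concerns approximating groups $G_\alpha$ that are not finitely generated, where the definition records the F\o lner function of the subgroup generated by the images of the approximated ball. For a witness of $LEA_{G,S_G}(n)$ this subgroup is $K:=\langle \varphi(B_{G,S_G}(n))\rangle$, a finitely generated amenable group, and the recorded value is $\mathrm{F\o l}_{K}(n)$. The inclusion $B_{H,S_H}(n)\subseteq B_{G,S_G}(n)$ shows that $K$ also contains $\varphi(B_{H,S_H}(n))$, so $K$ is itself an amenable group into which $B_{H,S_H}(n)$ embeds as a monomorphism on the ball; hence $\mathrm{F\o l}_{K}(n)$ is an admissible competitor in the minimum defining $LEA_{H,S_H}(n)$ and the comparison of minima goes through verbatim. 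I expect no serious obstacle beyond this bookkeeping, the statement being purely a restriction-of-domain phenomenon.
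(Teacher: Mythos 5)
Your treatment of parts \ref{prop:fii3} and (iv) is correct and matches the paper's (terse) justification ``by definitions'': choosing $S_H\subseteq S_G$ gives $B_{H,S_H}(n)\subseteq B_{G,S_G}(n)$, a monomorphism on the larger ball restricts to one on the smaller ball into the same amenable group, and comparing the two minima together with generating-set independence yields the inequalities. Your extra care about non-finitely-generated witnesses is a reasonable piece of bookkeeping. The same restriction argument also disposes of part \ref{i:sg}, which you invoke implicitly.

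However, there is a genuine gap: you never address part \ref{i:fi}, which is the only substantive claim in the proposition and the one to which the paper devotes essentially its entire proof. For a finite-index subgroup $H\leqslant G$ the inequality $\mathcal{D}_{H}\preccurlyeq\mathcal{D}_{G}$ follows from restriction as in \ref{i:sg}, but the reverse inequality $\mathcal{D}_{G}^{sof}\preccurlyeq\mathcal{D}_{H}^{sof}$ (and its hyperlinear and linear sofic analogues) is \emph{not} a restriction-of-domain phenomenon: one must manufacture an approximation of the ambient group $G$ out of an approximation of the smaller group $H$. The paper does this by an induced-representation construction. Writing $\ell=[G:H]$, fixing coset representatives $g_1,\ldots,g_\ell$ and the decomposition $gg_i=g_{\alpha_g(i)}h_{g,i}$ with the cocycle identities $\alpha_g\circ\alpha_k=\alpha_{gk}$ and $h_{gk,i}=h_{g,\alpha_k(i)}h_{k,i}$, an $(n,1)$-approximation $\varphi\colon H\to\Sym(m)$ is promoted to $\psi\colon G\to\Sym(\ell m)$, $\psi(g)(i,j)=\bigl(\alpha_g(i),\varphi(h_{g,i})(j)\bigr)$, and verifying that $\psi$ is again an $(n,1)$-approximation uses the block-additivity of the normalized Hamming distance (and the corresponding identities for $d_{\rm HS}$ and $d_{\rm rank}$ in the other two cases). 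None of this can be extracted from your restriction scheme, so as written your proposal does not prove the stated proposition.
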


\begin{proof}
The first assertion is clear and it holds for an arbitrary $%
\mathcal{F}$-profile. The statements about LE-$\mathcal F$ and LEA profiles are by definitions.

We now prove the second assertion. By~\ref{i:sg} above, it suffices to show that $\mathcal{D}_{G}^{sof}\preccurlyeq \mathcal{D}_{H}^{sof}$. Let $\ell $ be the
index of $H$ in $G$, and $T=\left\{ g_{1},\ldots ,g_{\ell }\right\} $ be a
choice of representatives for left cosets of $H$ in $G$. Observe that for
every $g\in G$ and $i\leqslant \ell $ there exist unique $\alpha _{g}\left(
i\right) \leqslant \ell $ and $h_{g,i}\in H$ such that $gg_{i}=g_{\alpha
_{g}\left( i\right) }h_{g,i}$. From the uniqueness of such a representation
one can deduce that $\alpha _{g}\circ \alpha _{k}=\alpha _{gk}$ and $%
h_{gk,i}=h_{g,\alpha _{k}\left( i\right) }h_{k,i}$. Furthermore the map $%
i\mapsto \alpha _{g}\left( i\right) $ is a permutation of $\left\{ 1,\ldots
,\ell \right\} $. Suppose now that $\varphi\colon H\rightarrow \Sym(m)$ is an $%
\left( n,1\right) $-approximation of $H$. We define a map  $\psi\colon G\rightarrow \Sym(\ell m)$ by identifying $%
\Sym(\ell m)$ with the group of permutations of $\left\{ 1,\ldots ,\ell
\right\} \times \left\{ 1,\ldots , m\right\} $ and defining $\psi \left(
g\right) $ to be the map $\left( i,j\right) \mapsto \left(
\alpha_{g}\left( i\right) ,\varphi \left( h_{g,i}\right) \left( j\right) \right).$
It is straightforward to check that  for every $g\in G$  the map $ \psi \left(
g\right) $ is a permutation and that $\psi$ is an $\left( n,1\right) $%
-approximation of $G$ of dimension $\ell m$. For the latter, we use a general fact that given two permutations $\sigma\in \Sym(m)$ and $\tau\in\Sym(q)$, the normalized Hamming distance of the direct sum $\sigma\oplus \tau\in \Sym(m+q)$ satisfies
$$
d_{\rm{Ham}}(\sigma\oplus \tau, e_{\Sym(m+q)})=\frac{md_{\rm{Ham}}(\sigma, e_{\Sym(m)}) + qd_{\rm{Ham}}(\tau, e_{\Sym(q)})}{m+q}.
$$
This shows the required $\mathcal{D}_{G}^{sof}\preccurlyeq \mathcal{D}_{H}^{sof}$.
The proofs for the hyperlinear and linear sofic profiles are analogous, using corresponding properties of the normalized Hilbert-Schmidt and rank distances, cf.~\cite[Proposition 2.2]{arzhantseva_linear_2012}. Namely, we use a general fact that given two unitary matrices $u\in U(m)$ and $v\in U(q)$, the normalized Hilbert-Schmidt distance of the block-diagonal matrix $u\oplus v\in U(m+q)$ satisfies
$$
d_{\rm{HS}}^2(u\oplus v, e_{U(m+q)})=\frac{md^2_{\rm{HS}}(u, e_{U(m)}) + qd^2_{\rm{HS}}(v, e_{U(q)})}{m+q}.
$$
Similarly, given $u\in GL(m,\mathbb K)$ and $v\in GL(q,\mathbb K)$, the normalized rank distance of the block-diagonal matrix $u\oplus v\in GL(m+q, \mathbb K)$ satisfies
$$
d_{\rm{rank}}(u\oplus v, e_{GL(m+q, \mathbb K)})=\frac{md_{\rm{rank}}(u, e_{GL(m,\mathbb K)}) + qd_{\rm{rank}}(v, e_{GL(q,\mathbb K)})}{m+q}.
$$

\end{proof}

\begin{remark}
Proposition~\ref{Proposition:finite-index}\ref{i:fi} extends to other metric $\mathcal F$-profiles. The proof proceeds as above constructing $\mathcal F$-approximations on $G$ induced (as induced representations) by a given $\mathcal F$-approximation of $H$. For this to work, assumptions on the family $\mathcal F$ and the distances $d_\alpha$ are required. For example, one assumes that $G_\alpha\oplus G_\beta$ is defined and is isomorphic to some $G_\gamma$ and the distances $d_\alpha, d_\beta, d_\gamma$ satisfy
that $f(d_\gamma)$ is a convex combination of $f(d_\alpha)$ and $f(d_\beta)$ for some function $f$. We call such functions $f$ \emph{diagonally block-convex}, which include so-called \emph{diagonally block-additive} functions where the convexity is given by the sum: $f(d_\gamma)=f(d_\alpha)+f(d_\beta)$.
In our  proof above, $f(x)=x$ for the normalized Hamming and rank distances and $f(x)=x^2$ for the Hilbert-Schmidt distance. Obviously, we can take $f(x)=x^p, 1\leqslant p\leqslant \infty$ which yields the convex equality as above for the $p$-Schatten norm. Therefore, we have the result as in Proposition~\ref{Proposition:finite-index}\ref{i:fi} for the metric approximations by matrices endowed with the normalized $p$-Schatten norm, for $1\leqslant p\leqslant \infty$, and hence, for the normalized trace and operator norms (the rank of a matrix can be viewed as its $p$-Schatten norm for $p=0$). Similarly, $\mathcal{D}_{G}^{fin}\simeq \mathcal{D}_{H}^{fin}$, whenever the distances $d_\alpha$ on the finite approximating groups admit some diagonally block-convex function as above.
\end{remark}

%%%%%%%%%
\subsection{Direct and free products}\label{sec:direct}
It follows from Proposition \ref{Proposition:finite-index} that for every $n\in \mathbb N$ we have $$\max\{\mathcal{D}^{\mathcal F}%
_{G}\left( n\right) , \mathcal{D}^{\mathcal F}%
_{H}\left( n\right) \} \leqslant \mathcal{D}^{\mathcal F}%
_{G\times H}\left( n\right), \hbox{ \strut } \max\{\Phi_{G}^{\mathcal{F}}\left( n\right),\Phi_{H}^{\mathcal{F}}\left( n\right)\} \leqslant \Phi_{G\times H}^{\mathcal{F}}\left( n\right) \hbox{ and }$$ 
$$\max\{LEA_{G}\left( n\right), LEA_{H}\left( n\right)\} \leqslant LEA_{G\times H}\left( n\right).$$ 
On the other hand,  taking a family $\mathcal F\in \left\{ \mathcal F^{sof},  \mathcal F^{hyp},  \mathcal F^{fin},   \mathcal F^{fin}_{cc}, \mathcal F^{lin}, 
\mathcal F^{fin}_{\{0,1\}}, \mathcal F^{a}_{\{0,1\}}\right\}$,
the known proofs that if $G$ and $H$ are $\mathcal F$-approximable, then
$G\times H$ is again $\mathcal F$-approximable (see, for example, \cite[Theorem 1]{elek_sofic_2006}  for the proof for sofic groups) give
$$
\mathcal{D}^{\mathcal F}%
_{G\times H}\left( n\right) \preccurlyeq \mathcal{D}^{\mathcal F}_{G}\left( n\right) 
\mathcal{D}^{\mathcal F}_{H}\left( n\right). 
$$
Analogously, for LE-$\mathcal F$ groups $G$ and $H$, we have, under the assumption that $\mathcal F$ is closed under taking direct product of two groups,
$$
\Phi_{G\times H}^{\mathcal{F}}\left( n\right)  \preccurlyeq \Phi_{G}^{\mathcal{F}}\left( n\right)\Phi_{H}^{\mathcal{F}}\left( n\right).
$$
It is also not hard to check that, for LEA groups $G$ and $H$, for the F\o lner type profile, we have:
$$
LEA_{G\times H}\left( n\right)\preccurlyeq LEA_{G}\left( n\right)LEA_{H}\left( n\right).
$$
The only point in the above estimates is to choose a suitable finite generating set of the direct product $G\times H$ starting from finite generating sets 
$S_G$ and $S_H$. A natural choice is satisfactory: one can take $S_{G\times H}= S_G\times\{ e_H\} \sqcup \{e_G\}\times S_H.$ 

The case of free products is less understood. Some of the classes of $\mathcal F$-approximable groups are not known to
be closed under taking free products, e.g. it is not yet proved for weak soficity and for weak hyperlinearity.
As usual, we focus on the quantifying aspects.

Let $G$ and $H$ be sofic groups, and $F_r$ denote a free group of rank $r\geqslant 2$. It follows from Proposition~\ref{Proposition:finite-index}\ref{prop:fii3} that
$\Phi_{F_{r}}\left( n\right)\leqslant \Phi_{F_{2}}\left( n\right)$. This and Elek-Szab\'o's proof of soficity of the free product $G\ast H$~\cite[%
Theorem 1]{elek_sofic_2006} give the estimate $$\mathcal{D}^{sof}_{G\ast H}\left(
n\right) \preccurlyeq \mathcal{D}^{sof}_{G}\left( n\right) \mathcal{D}^{sof}_{H}\left(
n\right) \Phi_{F_{2}}\left( n\right).$$

Analogously, a free product of linear sofic groups is linear sofic~\cite[Theorem 5.6]{stolz2013properties} and one might extract an upper bound on $\mathcal{D}^{lin}_{G\ast H}\left(
n\right)$ whenever $G$ and $H$ are linear sofic, see Conjecture~\ref{conj:lsfree}. 

A free product of hyperlinear groups is hyperlinear~\cite{popa_free,voiculescu_fp} or~\cite{Brown_fp} but a meaningful upper bound on $\mathcal{D}^{hyp}_{G\ast H}\left(
n\right)$, whenever $G$ and $H$ are hyperlinear, seems unknown, see Question~\ref{q:hypfree}.

For residually amenable $G$ and $H$, an upper bound on $RA_{G\ast H}(n)$ is obtained in~\cite[Theorem~6.2.7 and lines before Example~6.2.9]{federico}. See Question~\ref{q:leffp} and Question~\ref{q:leafp}.

\subsection{Extensions by amenable groups}

Let $G$ be a group, $A$ be a finite set of size $n$, and $G^A$ be the group of all functions from $A$ to $G$. Let $%
\mathrm{Sym}\left( A\right) $ be the group of permutations on $A$, which is
clearly isomorphic to the group $\Sym(n)$ of permutations on $\left\{
1,2,\ldots ,n\right\} $. 

The 
\emph{permutation wreath product } is the semidirect product $\mathrm{Sym}%
\left( A\right) \ltimes G^{A}$ with respect to the action of $\mathrm{Sym}%
\left( A\right) $ on $G^{A}$ that permutes the coordinates. An element of $%
\mathrm{Sym}\left( A\right) \ltimes G^{A}$ is represented by  a pair $\left(
\sigma ,b\right) $ where $\sigma \in \mathrm{Sym}\left( A\right) $ and $b\in
G^{A}$ is a function $b\colon A\to G$. When $G$ is a bi-invariant metric group, one can endow $\mathrm{Sym}%
\left( A\right) \ltimes G^{A}$ with a canonical bi-invariant metric, which
was defined in \cite[Section 5]{holt_closure} as follows. If $\sigma
_{0},\sigma _{1}\in \mathrm{Sym}\left( A\right) $ and $b_{0},b_{1}\in G^{A}$%
, then%
\begin{equation*}
\begin{split}
d_{\mathrm{Sym}\left( A\right) \ltimes G^{A}}\left( \left( \sigma
_{0},b_{0}\right) ,\left( \sigma _{1},b_{1}\right) \right) = \frac{1}{n}\sum
\left\{ d_{G}\left( b_{0}\left( a\right) ,b_{1}\left( a\right) \right) :a\in
A,\sigma _{0}\left( a\right) = \sigma _{1}\left( a\right) \right\} +\\ d_{%
\mathrm{Sym}\left( A\right) }\left( \sigma _{0},\sigma _{1}\right),
\end{split} 
\end{equation*}%
where $d_{\mathrm{Sym}\left( A\right) }$ denotes the normalized Hamming
distance on $\mathrm{Sym}\left( A\right) $.

We now introduce a useful notion of controlled F\o lner function, which is
implied by the controlled F\o lner sequence introduced in \cite[%
Section 3.2]{de_cornulier_isometric_2007}.

\begin{definition}[Controlled F\o lner function]
Let $G$ be a finitely-generated amenable group with finite symmetric generating set $S$.
The \emph{controlled F\o lner function} of $G$ with respect to $S$, denoted by  $\mathrm{F\o l}_{G,S}^{\emph{%
con}}\left( n\right) $, is defined to be the smallest $k\geqslant n$ such that there exists a
subset $A$ of $B_{G,S}\left( k\right) $ of size at most $k$ with the
property that $$\sum_{g\in B_{G,S}\left( n\right) }\left\vert
gA\bigtriangleup A\right\vert \leqslant \left\vert A\right\vert /n.$$ 
\end{definition}

As usual, choosing a different finite generating set would yield a function with the
same asymptotic type of growth. We let $\mathrm{F\o l}_{G}^{\emph{con}}$ be the corresponding $%
\simeq $-equivalence class of functions.

%%%
Let now $H$ be a subgroup of $G$ generated by a finite set $R$.
We now recall the notion of \emph{distortion } for the subgroup $H$ in $G$ as defined in \cite[Chapter
3]{gromov_asymptotic_1993}. 

\begin{definition}[Distortion function]
The\emph{\
distortion function} of $H$ in $G$ with respect to $R$ and $S$, denoted by $\Delta _{H\leqslant G,R,S}\left( n\right)$, is defined to be
the smallest $k\geqslant n$ such that $H\cap B_{G,S}\left( n\right) $ is
contained in $B_{H,R}\left( k\right) $. 
\end{definition}

Since different choices of
generating sets yield functions with the same asymptotic type of growth, the $\simeq $-equivalence class of 
$\Delta _{H\leqslant G}$ of the function defined above is well-defined.

Let $\mathcal F$ be an approximating family  $\mathcal{F}=\{(G_{\alpha},d_{\alpha}, k_{\alpha}, \varepsilon _{\alpha })\}_{\alpha\in I}$ such that for every 
$\alpha \in I$, $\varepsilon _{\alpha }$ is equal to a fixed strictly
positive constant, which up to normalization can be assumed to be equal to $1$.
We furthermore assume that for every $\alpha \in I$ and a finite set $A$ there
exists $\beta \in I$ such that $k_{\beta }\leqslant \left\vert A\right\vert
k_{\alpha }$, and the permutation wreath product $\mathrm{\mathrm{Sym}}%
\left( A\right) \ltimes G_{\alpha }^{A}$ is isometrically isomorphic (when
endowed with the canonical bi-invariant metric $d_{\mathrm{Sym}\left( A\right) \ltimes G_{\alpha}^{A}}$ described above) to $%
G_{\beta }$. For example, this applies when $I=\mathbb{N}$ and, for every $%
n\in \mathbb{N}$, $G_{n}$ is the permutation group $\Sym(n)$ endowed with the
normalized Hamming distance. The following result can be seen as a
quantified version of \cite[Theorem 5.1]{holt_closure}.

\begin{theorem}
Let $G$ be a finitely generated group and  $\mathcal F$ be a family as above, $N$ be a normal subgroup
of $G$ such that the quotient $G/N$ is amenable. Then we have: 
$$\mathcal{%
D}_{G}^{\mathcal{F}}\left( n\right) \preccurlyeq \mathrm{F\o l}%
_{G/N}^{\,\emph{con}}\left( n\right) \mathcal{D}_{N}^{\mathcal{F}}\left(\Delta
_{N\leqslant G}(\mathrm{F\o l}_{G/N}^{\,\emph{con}}\left( n\right) )\right).$$
\end{theorem}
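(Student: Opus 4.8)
The plan is to carry out, in an approximate form, exactly the induced-representation construction from the proof of Proposition~\ref{Proposition:finite-index}\ref{i:fi}, but with the exact finite transversal of a finite-index subgroup replaced by a controlled F\o lner set of the amenable quotient $G/N$, and with the exact homomorphism of the subgroup replaced by an $\mathcal F$-approximation of $N$. The distortion function enters precisely to control the word length, inside $N$, of the coset-cocycle values that this construction produces.

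Fix $n$ and let $\pi\colon G\to G/N$ be the quotient map, with $G/N$ generated by the image of $S$. Let $m=\mathrm{F\o l}_{G/N}^{\,\emph{con}}(Cn)$ for a constant $C$ to be chosen at the end, and let $A\subseteq B_{G/N}(m)$ with $|A|\leqslant m$ be a controlled F\o lner set, so $\sum_{g\in B_{G/N}(Cn)}|gA\bigtriangleup A|\leqslant |A|/(Cn)$. For each coset $a\in A$ pick a lift $t_a\in B_{G}(m)$ with $\pi(t_a)=a$. For $g\in B_{G}(n)$ the partial map $a\mapsto \pi(g)a$ is defined on the \emph{good set} $A_g=\{a\in A:\pi(g)a\in A\}$, where $|A\setminus A_g|\leqslant |A|/(Cn)$ by the F\o lner inequality; extend it arbitrarily to a permutation $\sigma_g\in\Sym(A)$. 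On $A_g$ put $h_{g,a}=t_{\sigma_g(a)}^{-1}g\,t_a\in N$; since $g\in B_{G}(n)$ and $t_a,t_{\sigma_g(a)}\in B_{G}(m)$ we have $h_{g,a}\in N\cap B_{G}(3m)$, hence $h_{g,a}\in B_{N}(\Delta_{N\leqslant G}(3m))$ by definition of the distortion function. Set $N'=\Delta_{N\leqslant G}(3m)$ and let $\varphi\colon N\to G_{\alpha}$ be an $(N',1)$-approximation of $N$ with $k_{\alpha}=\mathcal{D}_{N}^{\mathcal F}(N')$.

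Now form $G_{\beta}\cong \Sym(A)\ltimes G_{\alpha}^{A}$, which lies in $\mathcal F$ with $k_{\beta}\leqslant |A|\,k_{\alpha}$ by hypothesis, and define $\psi\colon G\to G_{\beta}$ by $\psi(g)=(\sigma_g,\,a\mapsto\varphi(h_{g,a}))$, with $h_{g,a}:=e_{N}$ off $A_g$. On the good set one has $\sigma_{gg'}=\sigma_g\sigma_{g'}$ and the cocycle identity $h_{gg',a}=h_{g,\sigma_{g'}(a)}h_{g',a}$, exactly as in Proposition~\ref{Proposition:finite-index}. To check Definition~\ref{def:app}(1) I would split $d_{\beta}(\psi(g)\psi(g'),\psi(gg'))$ into its Hamming part and its $G_{\alpha}^{A}$-part: the Hamming part is bounded by the bad fraction, which the F\o lner estimate keeps below $2/(Cn)$; the $G_{\alpha}^{A}$-part sums only over coordinates where the two permutations agree, and there the cocycle reduces the discrepancy to the almost-multiplicativity of $\varphi$, hence to at most $1/N'\leqslant 1/n$ per coordinate, while F\o lner-bad coordinates are excluded from this sum by the very shape of the wreath-product metric (so no boundedness of $d_{\alpha}$ is needed). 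For Definition~\ref{def:app}(2), take distinct $g,g'\in B_{G}(n)$: if $\pi(g)\neq\pi(g')$ then $\sigma_g(a)\neq\sigma_{g'}(a)$ throughout the common good set, so the Hamming part alone exceeds $1-2/(Cn)$; if $\pi(g)=\pi(g')$ but $g\neq g'$, then on the good set $h_{g,a}^{-1}h_{g',a}=t_a^{-1}(g^{-1}g')t_a\neq e_{N}$ and both $h_{g,a},h_{g',a}$ lie in $B_{N}(N')$, so uniform injectivity of $\varphi$ gives $d_{\alpha}(\varphi(h_{g,a}),\varphi(h_{g',a}))>1-1/N'$ on coordinates of density at least $1-2/(Cn)$ where the permutations agree, forcing the $G_{\alpha}^{A}$-part above $1-O(1/n)$.

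Choosing $C$ large enough turns both estimates into the required $<1/n$ and $>1-1/n$, so $\psi$ is an $(n,1)$-approximation of $G$ of dimension $k_{\beta}\leqslant m\cdot\mathcal{D}_{N}^{\mathcal F}(\Delta_{N\leqslant G}(3m))$. Reading this off with $m=\mathrm{F\o l}_{G/N}^{\,\emph{con}}(Cn)$ yields $\mathcal{D}_{G}^{\mathcal F}(n)\leqslant \mathrm{F\o l}_{G/N}^{\,\emph{con}}(Cn)\cdot\mathcal{D}_{N}^{\mathcal F}(\Delta_{N\leqslant G}(3\,\mathrm{F\o l}_{G/N}^{\,\emph{con}}(Cn)))$, and the constants $C$ and $3$ are absorbed by the quasi-order $\preccurlyeq$, giving the claim. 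I expect the main obstacle to be the bookkeeping in condition (1): one must confirm that the wreath-product metric genuinely confines the approximate-homomorphism error of $\varphi$ to the permutation-agreement coordinates, so that the F\o lner-bad coordinates feed only the Hamming term, and that the single radius $N'=\Delta_{N\leqslant G}(3m)$ is large enough to place all three relevant elements $h_{g,\sigma_{g'}(a)}$, $h_{g',a}$, $h_{gg',a}$ simultaneously in the ball on which $\varphi$ is controlled.
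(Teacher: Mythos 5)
Your construction is essentially the paper's proof: the paper likewise lifts a controlled F\o lner set of $G/N$ to a set $A\subseteq B_{G,T}(k)$ with $k=\mathrm{F\o l}_{G/N,\overline{T}}^{\,con}(10n)$, lets $G$ act on $A$ through the quotient, feeds the resulting $N$-valued cocycle (whose length in $N$ is controlled via $\Delta_{N\leqslant G}(20k)$, matching your $\Delta_{N\leqslant G}(3m)$ up to the constants absorbed by $\preccurlyeq$) into an $\mathcal F$-approximation of $N$, and assembles everything in $\Sym(A)\ltimes G_{\alpha}^{A}$, deferring the verification of Definition~\ref{def:app}(1)--(2) to the end of the proof of \cite[Theorem 5.1]{holt_closure}. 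The one caveat in your sketch is the parenthetical claim that the F\o lner-bad coordinates are automatically excluded from the $G_{\alpha}^{A}$-part of the metric: after the arbitrary extension of the partial bijections to permutations, a bad coordinate may still be a point of agreement of $\sigma_g\sigma_{g'}$ and $\sigma_{gg'}$, so one does use that $d_{\alpha}$ is bounded (the metrics here are normalized) to absorb those at most $|A|/(Cn)$ uncontrolled terms into the error.
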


\begin{proof}
For $g\in G$ we denote by $\overline{g}$ the image of $g$ under the quotient
map $G\twoheadrightarrow G/N$. Similarly, if $A$ is a subset of $G$, then we let $%
\overline{A}$ be the collection $\left\{ \overline{g}:g\in A\right\} $. Fix
a finite symmetric subset $T$ of $G$ such that $\overline{T}$ is a
generating set of $G/N$. Fix also a right inverse $\sigma\colon G/N\rightarrow G$
for the quotient map $G\twoheadrightarrow G/N$ such that $\sigma (B_{G/N,\overline{T%
}}(n))\subseteq B_{G,T}(n)$ for every $n\in \mathbb{N}$. In particular, this
implies that $\sigma \left( \overline{e}_{G}\right) =e_{G}$ and $\sigma
\left( \overline{g}\right) =g$ for $g\in T$. Let now $R$ be a finite
symmetric generating subset of $N$ with the property that, for every $t\in T$%
, $t^{-1}\sigma \left( \overline{t}\right) \in N$. Finally, let $S$ be the
finite symmetric generating set $R\cup T$ for $G$.

Let $n$ be a natural number. Observe that $\overline{B_{G,S}\left( n\right) }%
\subseteq B_{G/N,\overline{T}}(n)$ for every $n\in \mathbb{N}$. Set $k:=
\mathrm{F\o l}_{G/N,\overline{T}}^{\emph{con}}\left( 10n\right) \geqslant 10n$.
By definition of the controlled F\o lner function of $G/N$, one has that
there exists a finite subset $B$ of $G$ such that $B\subseteq B_{G/N,\overline{%
T}}(k)$, $\left\vert B\right\vert \leqslant k$, and $\frac{1}{\left\vert
B\right\vert }\sum_{g\in B_{G,S}\left( 10n\right) }\left\vert \overline{g}%
B\bigtriangleup B\right\vert \leqslant \left( 10n\right) ^{-1}$. By the choice of 
$\sigma $, $A=\sigma \left( B\right) $ is a subset of $B_{G,T}\left(
k\right) $ such that $B=\overline{A}$. Consider now a function $\phi
\colon G\rightarrow \mathrm{\mathrm{Sym}}\left( A\right) $, $g\mapsto \phi _{g}$,
such that $\phi _{g}\left( a\right) =\sigma \left( \overline{ag}\right) $ if 
$\overline{ag}\in \overline{A}$ (and it is defined arbitrarily otherwise).
We have that 
\begin{equation*}
A\cdot B_{G,S}\left( 10n\right) \cdot A^{-1}\subseteq B_{G,S}\left(
2k+10n\right) \subseteq B_{G,S}\left( 20k\right)
\end{equation*}%
and hence 
\begin{equation*}
N\cap A\cdot B_{G,S}\left( 10n\right) \cdot A^{-1}\subseteq B_{N,R}\left(
\Delta _{N\leqslant G}\left( 20k\right) \right) \text{.}
\end{equation*}%
By definition of $\mathcal{D}_{N,R}^{\mathcal{F}}$, we deduce that there
exist $\alpha \in I$ such that $k_{\alpha }\leqslant \mathcal{D}_{N,R}^{\mathcal{F%
}}\left( \Delta _{N\leqslant G}\left( 20k\right) \right) $, and a $\left(
20k,1\right) $-approximation $\psi\colon N\rightarrow G_{\alpha }$. By hypothesis on $\mathcal F$,
there exists $\beta \in I$ such that 
\begin{equation*}
k_{\beta }\leqslant \left\vert A\right\vert k_{\alpha }\leqslant k\mathcal{D}_{N,R}^{%
\mathcal{F}}\left( \Delta _{N\leqslant G}\left( 20k\right) \right)
\end{equation*}%
and $G_{\beta }$ is isometrically isomorphic to $\mathrm{Sym}\left( A\right)
\ltimes G_{\alpha }^{A}$. Define now the function $\Upsilon\colon G\rightarrow \mathrm{Sym}%
\left( A\right) \ltimes G_{\alpha }^{A}$ by setting $\Upsilon\left( g\right)
=\left( \psi _{g},b_{g}\right) $, where $b_{g}\in G_{\alpha }^{A}$ is
defined by $a\mapsto \psi \left( \sigma \left( ag^{-1}\right) ga\right) $.
Then the argument at the end of the proof \cite[Theorem 5.1]%
{holt_closure} shows that $\Upsilon$ is an $\left( n,1\right) $%
-approximation for $G$. This concludes our proof.
\end{proof}

\begin{corollary}
Suppose that $G$ is a finitely-generated group, and $N$ is a
finitely-generated normal subgroup of $G$ such that the quotient $G/N$ is
amenable. Then we have $$\mathcal{D}_{G}^{\emph{sof}}\left( n\right)
\preccurlyeq \mathrm{F\o l}_{G/N}^{\emph{con}}\left( n\right) \mathcal{D}%
_{N}^{\emph{sof}}(\Delta _{N\leqslant G}(\mathrm{F\o l}_{G/N}^{\emph{con}%
}\left( n\right) )).$$
\end{corollary}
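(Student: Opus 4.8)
The plan is to obtain the Corollary as the specialization of the preceding Theorem to the sofic family $\mathcal F = \mathcal F^{sof} = (\Sym(n), d_{\mathrm{Ham}}, n, 1)_{n\in\mathbb N}$. For this it suffices to check that $\mathcal F^{sof}$ meets the two standing hypotheses imposed on $\mathcal F$ in the Theorem, after which the displayed inequality is precisely the Theorem's conclusion, read with $\mathcal D^{\mathcal F}_G = \mathcal D^{sof}_G$ and $\mathcal D^{\mathcal F}_N = \mathcal D^{sof}_N$. The first hypothesis—that $\varepsilon_\alpha$ be a fixed positive constant—holds trivially since $\varepsilon_n = 1$ for every $n$, so the entire content of the proof is the verification of the closure condition under permutation wreath products.

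Concretely, given a finite set $A$ and $G_\alpha = \Sym(m)$, I must exhibit $\beta$ with $k_\beta \leqslant |A| k_\alpha$ such that $\mathrm{Sym}(A) \ltimes \Sym(m)^A$, endowed with its canonical bi-invariant metric $d_{\mathrm{Sym}(A)\ltimes G_\alpha^A}$, is isometrically realized inside $G_\beta$. First I would take the imprimitive action of the wreath product on $A \times \{1, \ldots, m\}$, sending $(a,j) \mapsto (\sigma(a), b(a)(j))$; this is a faithful permutation representation realizing $\mathrm{Sym}(A) \ltimes \Sym(m)^A$ as a subgroup of $\Sym(|A| m)$, so I would set $G_\beta = \Sym(|A| m)$, giving $k_\beta = |A| m = |A| k_\alpha$ as required.

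The heart of the verification is that this realization is \emph{isometric} for the normalized Hamming distance. I would compute the fraction of points of $A \times \{1,\ldots,m\}$ moved differently by $(\sigma_0, b_0)$ and $(\sigma_1, b_1)$ by splitting the count over $a \in A$: when $\sigma_0(a) \neq \sigma_1(a)$ the first coordinate already differs for all $m$ values of $j$, which after normalization by $|A|m$ contributes exactly $d_{\mathrm{Ham}}(\sigma_0, \sigma_1)$; when $\sigma_0(a) = \sigma_1(a)$ only the second coordinate can differ, contributing $\tfrac{1}{|A|}\sum_{a:\,\sigma_0(a)=\sigma_1(a)} d_{\mathrm{Ham}}(b_0(a), b_1(a))$. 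These two terms reproduce the definition of $d_{\mathrm{Sym}(A)\ltimes G_\alpha^A}$ verbatim (with $n=|A|$ and $d_G = d_{\mathrm{Ham}}$), establishing the isometry.

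Having verified the hypotheses, the Corollary follows by invoking the Theorem. The one subtlety—and the step most prone to sloppiness—is that $\mathrm{Sym}(A)\ltimes\Sym(m)^A$ is a \emph{proper} subgroup of $\Sym(|A|m)$ rather than a full symmetric group, so $G_\beta$ strictly contains, rather than equals, the isometric copy of the wreath product. This is harmless: the approximation $\Upsilon$ produced in the Theorem's proof takes values in the wreath product, and since conditions (1) and (2) of Definition~\ref{def:app} are preserved under isometric group embeddings, composing $\Upsilon$ with $\mathrm{Sym}(A)\ltimes\Sym(m)^A \hookrightarrow \Sym(|A|m)$ yields an $(n,1)$-approximation of $G$ by $\Sym(|A|m)\in\mathcal F^{sof}$ of dimension $|A|m$. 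I expect this bookkeeping distinction between the wreath product and its ambient symmetric group to be the only genuine obstacle; the isometry computation itself, though central, is a routine unwinding of the metric's definition.
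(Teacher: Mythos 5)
Your proposal is correct and follows the same route as the paper: the corollary is obtained by specializing the preceding theorem to the family $\mathcal F^{sof}$, whose hypotheses the paper itself asserts (in the paragraph just before the theorem) are satisfied by the symmetric groups with the normalized Hamming distance. Your explicit verification of the isometry via the imprimitive action on $A\times\{1,\ldots,m\}$, and your observation that $\mathrm{Sym}(A)\ltimes\Sym(m)^{A}$ is only isometrically embedded in (not isomorphic to) $\Sym(|A|m)$ --- harmless because the approximation conditions are preserved under isometric embeddings --- merely supplies detail that the paper leaves implicit.
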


%%%%%
\subsection{Restricted wreath products}

Let $G$ and $H$ be two groups. The \emph{regular restricted wreath product }$G\wr H$ is the
semidirect product $B\rtimes H$, where $B=\bigoplus_HG$ is the group  of
finitely-supported functions from $H$ to $G$, and the action $\upsilon\colon%
H\curvearrowright B$ is the Bernoulli shift. An element of $G\wr H$ can be
represented by a pair $\left( b,h\right) $ where $h\in H$ and $b\in B, b\colon H\to G$. It is clear
that if $G$ and $H$ are finite, then $G\wr H$ is finite and $\left\vert G\wr
H\right\vert = \left\vert H\right\vert \left\vert G\right\vert
^{\left\vert H\right\vert }$.

Suppose now that $G$ is a bi-invariant metric group and $F$ is a finite
group.\ Then the wreath product $G\wr F=B \rtimes F$ is endowed with a
canonical bi-invariant metric, defined in \cite[Section 3]{holt_closure}
as follows. For $x_{0},x_{1}\in F$ and $b_{0},b_{1}\in B,$ we set%
\begin{equation*}
d_{G\wr F}\left( \left( b_{0}, x_{0}\right) ,\left( b_{1}, x_{1}\right)
\right) =\left\{ 
\begin{array}{ll}
\max_{x\in F}d_{G}\left( b_{0}\left( x\right) ,b_{1}\left( x\right) \right)
& \text{if }x_{0}=x_{1}\text{,} \\ 
1 & \text{otherwise.}%
\end{array}%
\right.
\end{equation*}%
It is proved in \cite[Lemma 3.2]{holt_closure} that if $G$ is endowed
with a commutator-contractive invariant length function, then for any finite
group $F$ the bi-invariant metric $d_{G\wr F}$ on $G\wr F$ described above is
commutator-contractive.

\subsubsection{The metric profile of wreath products by residually finite groups}

Let $\mathcal F$ be an approximating family  $\mathcal{F}=\{(G_{\alpha},d_{\alpha}, k_{\alpha}, \varepsilon _{\alpha })\}_{\alpha\in I}.$ 
We assume that $\varepsilon _{\alpha }=1$ for every $\alpha \in I$, and furthermore
that for any $\alpha \in I$ and any finite group $F$ there exists $\beta \in
I$ such that $k_{\beta }\leqslant \left\vert F\right\vert k_{\alpha }^{\left\vert
F\right\vert }$ and $G_{\beta }$ is isometrically isomorphic to the wreath
product $G_{\alpha }\wr F$ endowed with the bi-invariant metric $d_{G\wr F}$ described above. 
These assumptions are fulfilled for example for  $\mathcal{F}\in\{ \mathcal F^{fin},  \mathcal F^{fin}_{cc},  \mathcal F^{fin}_{\{0,1\}} \}.$ 

Recall that $\Phi_{G,S}\left( n\right)$ denotes the full residual finiteness growth function of a residually finite group $G$,
see Section~\ref{sec:lea}.

\begin{theorem}
\label{Proposition:wreath-residually} Let $\mathcal{F}$ be a family
as above. Let $G$ be an group with finite generating set $R$, and $H$ be a
group with finite generating set $T$. Consider the finite generating set $S$
of $G\wr H=\bigoplus_{H}G\rtimes H$ consisting of the pairs $\left(
b,h\right) $ where $h\in H$ and $b\in \bigoplus_{H}G$ has support contained
in $\left\{ e_H\right\} $ and range contained in $R$. Then for every $n\in 
\mathbb{N}$ we have:
$$\mathcal{D}_{G\wr
H,S}^{\mathcal{F}}\left( n\right) \leqslant \Phi_{H,T}\left( 4n\right)
\cdot \mathcal{D}_{G,R}^{\mathcal{F}}\left( \Phi_{H,T}\left(
4n\right) \right) ^{\Phi_{H,T}\left( 4n\right) }$$
\end{theorem}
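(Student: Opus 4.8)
The plan is to build, for each fixed $n$, a single group from $\mathcal F$ together with an explicit $(n,1)$-approximation $\Upsilon\colon G\wr H\to G_\alpha\wr F$ of $G\wr H$ by it. The construction glues two independent pieces of data: a \emph{finite} quotient $F=H/N$ of $H$ extracted from the full residual finiteness growth $\Phi_{H,T}$, and a metric $\mathcal F$-approximation $\psi$ of the base group $G$; the wreath-closure hypothesis on $\mathcal F$ is exactly what allows these to be combined inside $G_\alpha\wr F$.

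First I would set $L:=\Phi_{H,T}(4n)$ and choose a normal subgroup $N\unlhd H$ of index $|H/N|\leqslant L$ with $N\cap B_{H,T}(4n)=\{e_H\}$. Put $F:=H/N$ and let $q\colon H\twoheadrightarrow F$ be the quotient map. The hypothesis on $N$ forces $q$ to be injective on $B_{H,T}(2n)$ (if $x,y\in B_{H,T}(2n)$ and $q(x)=q(y)$ then $x^{-1}y\in N\cap B_{H,T}(4n)=\{e_H\}$); this injectivity, needed at radius $2n$, is the sole reason the statement refers to $\Phi_{H,T}$ at $4n$. I fix a section $s\colon F\to H$ of $q$ with $s(q(x))=x$ for every $x\in B_{H,T}(2n)$. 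Next, by definition of $\mathcal D_{G,R}^{\mathcal F}$, I pick $\alpha\in I$ with $k_\alpha\leqslant\mathcal D_{G,R}^{\mathcal F}(L)$ together with an $(L,1)$-approximation $\psi\colon G\to G_\alpha$, normalised so that $\psi(e_G)=e_\alpha$. Finally the closure assumption yields $\beta\in I$ with $k_\beta\leqslant|F|\,k_\alpha^{|F|}\leqslant L\cdot\mathcal D_{G,R}^{\mathcal F}(L)^{L}$ and an isometric isomorphism $G_\beta\cong G_\alpha\wr F$. This already gives the asserted dimension bound, so the whole content is to check that the map below is a valid $(n,1)$-approximation.

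I would define $\Upsilon(b,h):=(\beta_b,q(h))$, where $\beta_b\in G_\alpha^{F}$ is $\beta_b(f):=\psi\bigl(b(s(f))\bigr)$. For $(b,h)\in B_{G\wr H,S}(n)$ one has $h\in B_{H,T}(n)$, $\mathrm{supp}(b)\subseteq B_{H,T}(n)$ and $b(x)\in B_{G,R}(n)$ for all $x$. Since $q$ is a genuine homomorphism, the $F$-coordinate of $\Upsilon$ is multiplicative, and the formula for $d_{G_\alpha\wr F}$ on elements with equal $F$-parts reduces condition (1) to bounding, for each $f\in F$, the quantity $d_{G_\alpha}\bigl(\psi(b_0(s(f)))\,\psi(b_1(s(\overline{h_0}^{-1}f))),\,\psi(b_0(s(f))\,b_1(h_0^{-1}s(f)))\bigr)$ for ball elements $(b_0,h_0),(b_1,h_1)$ whose product again lies in $B_{G\wr H,S}(n)$. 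The crux is the \emph{folding identity} $s(\overline{h_0}^{-1}f)=h_0^{-1}s(f)$, valid whenever either base value is nontrivial: both elements lie in the same $N$-coset and, being products of members of $B_{H,T}(n)$ with $h_0\in B_{H,T}(n)$, both lie in $B_{H,T}(2n)$, where $q$ is injective. Granting it, every term collapses to the almost-multiplicativity of $\psi$ on $B_{G,R}(n)$ (with Lemma~\ref{Lemma:approximate-homomorphism} handling the trivial values), hence is $<1/L\leqslant 1/n$; when both base values are trivial the term is $0$. For condition (2) I split two distinct ball elements into cases: if their $H$-parts differ then so do their $q$-images, by injectivity on $B_{H,T}(n)$, and $d_{G_\alpha\wr F}=1$; if the $H$-parts agree but $b_0(x)\neq b_1(x)$ for some $x\in B_{H,T}(n)$, then $s(q(x))=x$ and the uniform injectivity of $\psi$ gives $d_{G_\alpha}(\beta_{b_0}(q(x)),\beta_{b_1}(q(x)))>1-1/L>1-1/n$.

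The main obstacle is precisely the folding identity: because the section $s$ is not a homomorphism, one must verify $s(\overline{h_0}^{-1}f)=h_0^{-1}s(f)$ by hand on exactly the range of positions carrying nontrivial base values, and it is this verification that pins down the radius $4n$. A secondary point worth recording is that the argument uses $L=\Phi_{H,T}(4n)\geqslant n$, so that $\psi$ is controlled on all of $B_{G,R}(n)$; this holds whenever $H$ is infinite, since then $q$ injective on $B_{H,T}(2n)$ forces $\Phi_{H,T}(4n)\geqslant|B_{H,T}(2n)|\geqslant 2n+1$. With these in hand, that $\Upsilon$ is an $(n,1)$-approximation follows the scheme of the end of the proof of \cite[Theorem~5.1]{holt_closure}.
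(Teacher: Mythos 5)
Your construction is identical to the paper's: set $m=\Phi_{H,T}(4n)$, choose $N\unlhd H$ of index at most $m$ meeting $B_{H,T}(4n)$ trivially, take an $(m,1)$-approximation $\psi$ of $G$ of dimension at most $\mathcal{D}_{G,R}^{\mathcal{F}}(m)$, and map $G\wr H$ into $G_{\alpha}\wr (H/N)$ by evaluating base functions at the unique coset representatives inside the ball (your section $s$ is exactly the paper's case distinction on $B_{H,T}(n)\cap kN$); the paper delegates the verification of the $(n,1)$-approximation conditions to the cited theorem of Holt--Rees, whereas you carry it out explicitly and correctly, including the folding identity that explains the radius $4n$. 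Your observation that the argument needs $\Phi_{H,T}(4n)\geqslant n$ (automatic for infinite $H$) is a legitimate point the paper leaves implicit.
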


\begin{proof}
Clearly, we can assume that $G\ $is $\mathcal{F}$-approximable and $H$ is
residually finite, otherwise there is nothing to prove. Observe that if $%
n\in \mathbb{N}$ then any element of $B_{G\wr H,S}\left( n\right) $ is of
the form $\left( b,h\right) $, where $h\in B_{H,T}\left( n\right) $ and $%
b\in \bigoplus_{H}G$ has support contained in $B_{H,T}\left( n\right) $ and
range contained in $B_{G,R}\left( n\right) $. Set $m=\Phi%
_{H,T}\left( 4n\right) $. Then by definition of $\Phi_{H,T}$ there
exists a normal subgroup $N$ of $H$ of index at most $m$ such that $N\cap
B_{H,T}\left( 4n\right) =\left\{ e_H\right\} $. Also by definition of $%
\mathcal{D}_{G,R}^{\mathcal{F}}$ there exists $\alpha \in I$ and a $\left(
m,1\right) $-approximation $\phi\colon G\rightarrow G_{\alpha }$ such that $%
k_{\alpha }\leqslant \mathcal{D}_{G}^{\mathcal{F}}\left( m\right) $. By
hypothesis on $\mathcal F$, there exists $\beta \in I$ such that $k_{\beta }\leqslant m\cdot
k_{\alpha }^{m}\leqslant m\cdot \mathcal{D}_{G}^{\mathcal{F}}\left( m\right) ^{m}$
and $G_{\beta }$ is isometrically isomorphic to $G_{\alpha }\wr H/N$. Define
now the function $\psi\colon G\wr H\rightarrow G_{\alpha }\wr H/N$ by setting $%
\psi \left(g,h\right) =(\widehat{g}, hN)$ where $\widehat{g}\colon H/N\rightarrow
G_{\alpha }$ is defined by 
\begin{equation*}
\widehat{g}\left( kN\right) =\left\{ 
\begin{array}{ll}
e_\alpha & \text{if }B_{H,T}\left( n\right) \cap kN=\varnothing \text{, and} \\ 
\varphi \left( g_{k^{\prime }}\right) & \text{if }B_{H,T}\left( n\right)
\cap kN=\left\{ k^{\prime }\right\} \text{.}%
\end{array}%
\right.
\end{equation*}%
Then the proof of \cite[Theorem 3.1]{holt_closure} shows that $\psi $
is a $\left( n,1\right) $-approximation of $G$.
\end{proof}

\subsubsection{The sofic profile of wreath products of sofic groups by sofic
groups}

In this section, we suppose that $G$ and $H$ are groups with finite
generating sets $R$ and $T$, respectively. We let $S$ be the set of elements
of $G\wr H=\bigoplus_{H}G\rtimes H$ of the form $\left( b,h\right) $ where $%
h\in T$ and $b\in \bigoplus_{H}G$ is such that the support of $b$ is
contained in $\left\{ e_H\right\} $ and the range is contained in $R$. 

Let $K$ denote a group with a bi-invariant distance $d$. If $F$ is a finite subset of $G$ and $c,\varepsilon >0$%
, then a function $\phi\colon G\rightarrow K$ is \emph{$\left(
F,\varepsilon \right) $-multiplicative} if $d\left( \phi \left( xy\right)
,\phi \left( x\right) \phi \left( y\right) \right) <\varepsilon $ for $%
x,y\in F,$ and \emph{$\left( F,c\right) $-injective} if $d\left( \phi \left(
x\right) ,\phi \left( y\right) \right) \geqslant c$ for $x,y\in F$ distinct, cf. terminology of Definition~\ref{def:app}.
Proceeding  as in the proof of \cite[Lemma 2.8]{hayes_metric_2018} gives the following.

\begin{lemma}
\label{Lemma:wreath} Suppose that $\Psi\colon G\wr H\rightarrow K$ is a function.
Fix $n\in \mathbb{N}$ and let $G_{n}$ be the set of elements of $
\bigoplus_{H}G$ whose support is contained in $B_{H,T}\left( n\right) $ and
whose range is contained in $B_{G,R}\left( n\right) $. Suppose that $\Psi $
satisfies the following (the identities $e_{\bigoplus_{H}G}$ and $e_{H}$ are denoted by $1$):

\begin{itemize}
\item $d\left( \Psi \left( xy,1\right) ,\Psi \left( x,1\right)\Psi \left(
y,1\right) \right) <\varepsilon _{1}$ whenever $x,y\in G_{n}$,

\item $d\left( \Psi \left( 1,x\right) \Psi \left( 1,y\right) ,\Psi \left(
1, xy\right) \right) <\varepsilon _{0}$ whenever $x,y\in B_{H,T}\left(
n\right) $,

\item $\Psi \left( x,1\right) \Psi \left( 1,y\right) =\Psi \left(\upsilon
_{y}\left( x\right), y \right) $ whenever $x\in G_{n}$ and $y\in B_{H,T}\left(
n\right) $, and

\item $\Psi \left( 1, y\right) \Psi \left( x,1\right) =\Psi \left( x, y\right) 
$.
\end{itemize}

Then $d\left( \Psi \left( zw\right) ,\Psi \left( z\right) \Psi \left(
w\right) \right) <\varepsilon _{0}+\varepsilon _{1}$ for any $z,w\in B_{G\wr
H,S}\left( n\right) $.
\end{lemma}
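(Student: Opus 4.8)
The plan is to collapse the product $\Psi(z)\Psi(w)$ into a single value of $\Psi$ by applying the four hypotheses in a fixed order, and then to recognise that single value as $\Psi(zw)$. First I would record the shape of the elements involved. Exactly as in the proof of Theorem~\ref{Proposition:wreath-residually}, any $z\in B_{G\wr H,S}(n)$ can be written as $z=(b,h)$ with $h\in B_{H,T}(n)$ and $b\in G_{n}$, and likewise $w=(c,k)$ with $k\in B_{H,T}(n)$ and $c\in G_{n}$. In the semidirect product $\bigoplus_{H}G\rtimes H$ one has
\[
zw=(b\cdot\upsilon_{h}(c),\,hk),
\]
so that the $H$-part of $zw$ is $hk\in B_{H,T}(2n)$ and its $G$-part is the function $b\cdot\upsilon_{h}(c)$.

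The core idea is to separate the ``$H$-directions'' from the ``$G$-directions'' using only the two \emph{exact} hypotheses, and to spend the two inequalities exactly once each. Using the fourth hypothesis I would first write $\Psi(b,h)=\Psi(1,h)\Psi(b,1)$ and $\Psi(c,k)=\Psi(1,k)\Psi(c,1)$, whence $\Psi(z)\Psi(w)=\Psi(1,h)\,\Psi(b,1)\,\Psi(1,k)\,\Psi(c,1)$. Next I would apply the third hypothesis, $\Psi(x,1)\Psi(1,y)=\Psi(\upsilon_{y}(x),y)$ (with $x=b\in G_{n}$ and $y=k\in B_{H,T}(n)$), followed once more by the fourth, to commute the central pure-$G$ and pure-$H$ factors past one another at no metric cost. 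This leaves the two pure-$H$ factors $\Psi(1,h),\Psi(1,k)$ adjacent and the two pure-$G$ factors adjacent. At this stage the only two inequalities enter: the second hypothesis merges the $H$-factors into a single $\Psi(1,hk)$ with error $<\varepsilon_{0}$ (legitimate since $h,k\in B_{H,T}(n)$), and the first hypothesis merges the $G$-factors into a single pure-$G$ value with error $<\varepsilon_{1}$. A final application of the fourth relation reassembles these into one value $\Psi(\,\cdot\,,hk)$, and by bi-invariance of $d$ together with the triangle inequality the two errors simply add, producing the bound $\varepsilon_{0}+\varepsilon_{1}$.

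The main obstacle is the shift bookkeeping. One must verify that, after the central factors are commuted via the third hypothesis, the merged pure-$G$ argument coincides \emph{exactly} with the $G$-part $b\cdot\upsilon_{h}(c)$ of the genuine product $zw$, so that the reassembled value is $\Psi(zw)$ and not $\Psi$ evaluated at some other element of $\bigoplus_{H}G\rtimes H$; getting the Bernoulli shift to land on the correct coordinate is precisely where the conventions for the semidirect product and for $\upsilon$ must be tracked with care. A secondary check, to be carried out in tandem, is that every intermediate pure-$G$ element produced by these shifts still has support and range inside balls of the appropriate radius, so that the first and third hypotheses---stated only for arguments in $G_{n}$ and $B_{H,T}(n)$---genuinely apply. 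This is exactly the bookkeeping performed in the proof of \cite[Lemma~2.8]{hayes_metric_2018}, to which the statement defers.
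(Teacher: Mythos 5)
Your strategy is the intended one: the paper's ``proof'' of this lemma is a deferral to \cite[Lemma 2.8]{hayes_metric_2018}, and the computation there is exactly your collapsing scheme. Writing $z=(b,h)$, $w=(c,k)$ with $b,c\in G_n$ and $h,k\in B_{H,T}(n)$, one uses the fourth identity to get $\Psi(z)\Psi(w)=\Psi(1,h)\Psi(b,1)\Psi(1,k)\Psi(c,1)$, the third and then the fourth identity to rewrite the middle pair as $\Psi(1,k)\Psi(\upsilon_k(b),1)$, the second hypothesis to merge $\Psi(1,h)\Psi(1,k)$ into $\Psi(1,hk)$ at cost $\varepsilon_0$, the first to merge the two pure-$G$ factors at cost $\varepsilon_1$, and the fourth again to reassemble; bi-invariance plus the triangle inequality add the two errors. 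Your ``main obstacle'' does resolve: the two exact hypotheses force the convention $(b,h)(c,k)=(\upsilon_k(b)c,hk)$ for the semidirect product, so the reassembled value is indeed $\Psi(zw)$.

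The check you defer to the end --- that every intermediate pure-$G$ element still lies in $G_n$ --- is, however, precisely the one that does not close as posed. The support of $\upsilon_k(b)$ is a translate by $k$ of the support of $b$, hence contained in $B_{H,T}(2n)$ but in general not in $B_{H,T}(n)$; so $\upsilon_k(b)\in G_{2n}\setminus G_n$ is possible, and the first bullet, stated only ``whenever $x,y\in G_n$'', does not literally license the merge $\Psi(\upsilon_k(b),1)\Psi(c,1)\approx\Psi(\upsilon_k(b)c,1)$. No reordering avoids this: the only exact identity that lets a pure-$H$ factor pass a pure-$G$ factor in the required direction is the third one, and it always introduces a shift by an element of $B_{H,T}(n)$, so at least one argument of the first hypothesis is unavoidably a shifted element. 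The fix is to invoke (or restate) the first bullet for $x,y\in G_{2n}$; this is harmless in every application in the paper, since in Lemma~\ref{Lemma:computation} the input maps are multiplicative and injective on balls of radius $4n$, so the constructed $\Psi$ satisfies the first bullet well beyond $G_n$. You should say this explicitly rather than asserting that the deferred bookkeeping succeeds for $G_n$ as stated.
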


Suppose that $K$ is a bi-invariant metric group. Let $d^{\prime }$ be any
bi-invariant metric on $\bigoplus_{B}K$ that restricts to the original
metric on $K$ on each copy of $K$, and $d^{\prime }$ is also the
corresponding metric on $\bigoplus_{B}K\wr _{B}\mathrm{Sym}\left( B\right) $%
. Consider also the maximum metric $d_{\max }$ on $\bigoplus_{B}K$ and the
corresponding metric on $\bigoplus_{B}K\wr _{B}\mathrm{Sym}\left( B\right) $%
. The proof of \cite[Proposition 3.3]{hayes_metric_2018} gives the following.

\begin{lemma}
\label{Lemma:computation}Fix $\varepsilon >0$. Suppose that $\sigma
\colon H\rightarrow \mathrm{\mathrm{Sym}}\left( B\right) $ is a $\left(
B_{H,T}\left( 4n\right) ,\varepsilon \right) $-multiplicative and $\left(
B_{H,T}\left( 4n\right) ,1-\varepsilon \right) $-injective function. Suppose
also that $\theta\colon G\rightarrow K$ is a $\left( B_{G,R}\left( 4n\right)
,\varepsilon \right) $-multiplicative and $\left( B_{G,R}\left( 4n\right)
,1-\varepsilon \right) $-injective function. Then there exists a function $$%
\Psi\colon G\wr H\rightarrow \bigoplus_{B}K\wr _{B}\mathrm{Sym}\left( B\right) $$
that is $(B_{G\wr S}\left( n\right) ,48\left\vert B_{H,T}\left( 4n\right)
\right\vert ^{2}\varepsilon )$-multiplicative with respect to the metric $%
d^{\prime }$ and $(B_{G\wr S}\left( n\right) ,1-48\left\vert B_{H,T}\left(
n\right) \right\vert ^{2}\varepsilon )$-injective with respect to the metric 
$d_{\max }$.
\end{lemma}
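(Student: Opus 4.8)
The plan is to build the map $\Psi$ out of $\sigma$ and $\theta$ exactly as in the proof of \cite[Proposition 3.3]{hayes_metric_2018}, to check that it meets the four hypotheses of Lemma~\ref{Lemma:wreath}, and then to verify injectivity separately. Writing an element of $\bigoplus_{B}K\wr _{B}\mathrm{Sym}\left( B\right)$ as a pair $(c,s)$ with $c\colon B\to K$ and $s\in\mathrm{Sym}(B)$, I would declare the $\mathrm{Sym}(B)$-coordinate of $\Psi(b,h)$ to be $\sigma(h)$ and the $\bigoplus_B K$-coordinate to be the function obtained by transporting each value $\theta(b(k))$, for $k$ in the support of $b$, to the $B$-coordinate $\sigma(k)(z_0)$ determined by a fixed reference point $z_0\in B$ (and setting it to $e_K$ on the remaining coordinates). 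Because both the shift on $\bigoplus_H G$ and the shift $\upsilon$ on $\bigoplus_B K$ are implemented through the \emph{same} permutations $\sigma(h)$, this placement is equivariant, so that the two purely algebraic identities of Lemma~\ref{Lemma:wreath}, namely $\Psi(x,1)\Psi(1,y)=\Psi(\upsilon_y(x),y)$ and $\Psi(1,y)\Psi(x,1)=\Psi(x,y)$, hold as genuine equalities; here the $(B_{H,T}(4n),1-\varepsilon)$-injectivity of $\sigma$ is what guarantees that the transport is well defined on the range of coordinates that matter.

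Next I would quantify the two approximate conditions. Condition~2 of Lemma~\ref{Lemma:wreath}, multiplicativity in the top variable, is immediate from the fact that $\sigma$ is $(B_{H,T}(4n),\varepsilon)$-multiplicative, so one may take $\varepsilon_0$ of the order of $\varepsilon$. Condition~1, multiplicativity in the base variable, is the delicate one: for $x,y\in G_{n}$ the discrepancy between $\Psi(xy,1)$ and $\Psi(x,1)\Psi(y,1)$ is measured coordinate by coordinate, and on each active coordinate it is controlled by the $\varepsilon$-multiplicativity of $\theta$ via Lemma~\ref{Lemma:approximate-homomorphism}. Since the supports involved lie in $B_{H,T}(n)$ and the relevant products spread them over at most $\lvert B_{H,T}(4n)\rvert$ coordinates, summing (or maximising) these coordinatewise errors in the metric $d'$ yields $\varepsilon_1$ of the order of $\lvert B_{H,T}(4n)\rvert^{2}\varepsilon$.

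With $\varepsilon_0$ and $\varepsilon_1$ in hand, Lemma~\ref{Lemma:wreath} gives at once that $\Psi$ is $(B_{G\wr H,S}(n),\varepsilon_0+\varepsilon_1)$-multiplicative with respect to $d'$, and absorbing the various absolute constants into the bound $\varepsilon_0+\varepsilon_1\leqslant 48\lvert B_{H,T}(4n)\rvert^{2}\varepsilon$ settles the multiplicativity half. For the injectivity half I would argue directly in the metric $d_{\max}$: given distinct $z,w\in B_{G\wr H,S}(n)$, either their $H$-coordinates differ, in which case the $(B_{H,T}(4n),1-\varepsilon)$-injectivity of $\sigma$ forces the $\mathrm{Sym}(B)$-parts of $\Psi(z)$ and $\Psi(w)$ to be far apart, or their $H$-coordinates agree and some base coordinate differs, in which case the $(B_{G,R}(4n),1-\varepsilon)$-injectivity of $\theta$ forces two corresponding $K$-coordinates to be at distance close to $1$; since $d_{\max}$ takes the maximum over coordinates, in both cases one obtains $d_{\max}(\Psi(z),\Psi(w))\geqslant 1-48\lvert B_{H,T}(n)\rvert^{2}\varepsilon$.

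The main obstacle I anticipate is the bookkeeping in the second paragraph: making the coordinatewise error analysis for condition~1 rigorous, tracking exactly how the supports propagate under multiplication so as to land the quadratic factor $\lvert B_{H,T}(4n)\rvert^{2}$ together with the absolute constant $48$, while simultaneously ensuring that the reference-point construction genuinely renders the two semidirect-product identities exact rather than merely approximate. Both of these are precisely the points worked out in \cite[Proposition 3.3]{hayes_metric_2018}, which the present statement imports.
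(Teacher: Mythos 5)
The paper offers no argument of its own here: its entire ``proof'' is the sentence preceding the statement, namely that the proof of \cite[Proposition 3.3]{hayes_metric_2018} gives the lemma. At the level of strategy you therefore agree with the paper --- both of you defer the construction of $\Psi$ and the bookkeeping to Hayes--Sale, filtered through Lemma~\ref{Lemma:wreath}.

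However, the explicit construction you sketch --- transporting each value $\theta(b(k))$ to the single coordinate $\sigma(k)(z_0)$ determined by one fixed reference point $z_0\in B$ --- is not the construction this statement is built around, and it fails at exactly the points you flag as delicate. Note first that the target group $\bigoplus_B K\wr_B\mathrm{Sym}(B)$ has base group $\bigl(\bigoplus_B K\bigr)^{B}$, doubly indexed over $B$; this is visible in the later formula $\Theta(\pi,\tau)\colon ((a_\beta),b)\mapsto ((\pi_{b,\beta}(a_\beta)),\tau(b))$ of Lemma~\ref{Lemma:sofic-computation}, where $\pi$ carries two $B$-indices. The intended construction uses \emph{every} point $b\in B$ as a reference point simultaneously, and it is the averaging over $b$ in the normalized metrics $d'$ and $d_{\max}$ that converts the Hamming-metric properties of $\sigma$ (``$\sigma(yk)$ and $\sigma(y)\sigma(k)$ agree on a $(1-\varepsilon)$-fraction of $B$'') into the stated bounds. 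With a single $z_0$, neither of your two key claims holds: the $(B_{H,T}(4n),1-\varepsilon)$-injectivity of $\sigma$ says that $\sigma(k)$ and $\sigma(k')$ \emph{differ on most points} of $B$, which does not prevent $\sigma(k)(z_0)=\sigma(k')(z_0)$, so your transport need not even be well defined; and the exact identities $\Psi(x,1)\Psi(1,y)=\Psi(\upsilon_y(x),y)$ and $\Psi(1,y)\Psi(x,1)=\Psi(x,y)$ demanded by Lemma~\ref{Lemma:wreath} would require $\sigma(yk)(z_0)=\sigma(y)\sigma(k)(z_0)$ at the prescribed point $z_0$, which is only known on a large fraction of base points. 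The missing idea is precisely the passage to the doubly indexed base group, so that these failures are measured on average rather than required pointwise; once that is in place, your error estimates for conditions (1) and (2) of Lemma~\ref{Lemma:wreath} and your case analysis for $d_{\max}$-injectivity go through essentially as you describe.
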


A quantitative analysis of the proof of \cite[Theorem 4.1]{hayes_metric_2018} then shows the
following.

\begin{lemma}
\label{Lemma:sofic-computation}Fix $\varepsilon >0$. Suppose that $\sigma
\colon H\rightarrow \mathrm{\mathrm{Sym}}\left( B\right) $ is a $\left(
B_{H,T}\left( 4n\right) ,\varepsilon \right) $-multiplicative and $\left(
B_{H,T}\left( 4n\right) ,1-\varepsilon \right) $-injective function. Suppose
that $\theta\colon G\rightarrow \mathrm{Sym}\left( A\right) $ is a $\left(
B_{G,R}\left( 4n\right) ,\varepsilon \right) $-multiplicative and $\left(
B_{G,R}\left( 4n\right) ,1-\varepsilon \right) $-injective function. Let $$%
\Psi\colon G\wr H\rightarrow \bigoplus_{B}\mathrm{Sym}\left( A\right) \wr _{B}%
\mathrm{Sym}\left( B\right) $$ be obtained from $\sigma $ and $\theta $ as in
Lemma \ref{Lemma:computation}. Define \begin{equation*} \Theta\colon \bigoplus_{B}\mathrm{Sym}%
\left( A\right) \wr _{B}\mathrm{Sym}\left( B\right) \rightarrow \mathrm{Sym}%
\left( A^{B}\times B\right) \hbox{ by }\end{equation*} \begin{equation*}
\Theta \left( \pi ,\tau \right)\colon \left( \left( a_{\beta }\right) ,b\right)
\mapsto \left( \left( \pi _{b,\beta }\left( a_{\beta }\right) \right) ,\tau
\left( b\right) \right) \text{.}
\end{equation*}%
Then the composition $\Theta \circ \Psi\colon G\wr H\rightarrow \mathrm{Sym}%
\left( B\times A^{B}\right) $ is $(B_{G\wr S}\left( n\right) ,48\left\vert
B_{H,T}\left( 4n\right) \right\vert ^{2}\varepsilon )$-multiplicative and $%
(B_{G\wr S}\left( n\right) ,1-48\left\vert B_{H,T}\left( n\right)
\right\vert ^{2}\varepsilon )$-injective when $\mathrm{Sym}\left( B\times
A^{B}\right) $ is endowed with the normalized Hamming distance.
\end{lemma}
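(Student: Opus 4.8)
The plan is to realize $\Theta$ as an injective group homomorphism and then transport the two defining properties of $\Psi$ through it, reducing everything to a single point-counting identity for the normalized Hamming distance.

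First I would verify that $\Theta$ is the standard embedding of $\bigoplus_B\Sym(A)\wr_B\Sym(B)$ into $\Sym(A^B\times B)$ afforded by its action on $A^B\times B$; a direct check of the wreath-product multiplication rule shows $\Theta$ is an injective homomorphism. Since $\Theta$ is a homomorphism into a group carrying a bi-invariant metric, both required properties can be read off from a single computation. Counting the points $((a_\beta)_\beta,b)\in A^B\times B$ moved by $\Theta(\pi,\tau)$ relative to $\Theta(\pi',\tau')$, and splitting according to whether the top coordinate $b$ is moved, gives the exact identity
\begin{equation*}
d_{\rm{Ham}}(\Theta(\pi,\tau),\Theta(\pi',\tau'))=d_{\rm{Ham}}(\tau,\tau')+\frac{1}{|B|}\sum_{b:\,\tau(b)=\tau'(b)}\Bigl(1-\prod_{\beta\in B}(1-s_{b,\beta})\Bigr),
\end{equation*}
where $s_{b,\beta}=d_{\rm{Ham}}(\pi_{b,\beta},\pi'_{b,\beta})$. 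This identity drives both estimates.

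For the multiplicative estimate I would use the elementary union bound $1-\prod_\beta(1-s_{b,\beta})\leqslant\sum_\beta s_{b,\beta}$ together with the fact that an average over $b$ is at most the maximum over $b$. These show that the right-hand side above is dominated by the bi-invariant metric $d'$ on the wreath product from Lemma~\ref{Lemma:computation}, i.e. $d_{\rm{Ham}}(\Theta(u),\Theta(v))\leqslant d'(u,v)$. Since $\Theta$ is a homomorphism, the multiplicative defect of $\Theta\circ\Psi$ equals $d_{\rm{Ham}}(\Theta(\Psi(zw)),\Theta(\Psi(z)\Psi(w)))\leqslant d'(\Psi(zw),\Psi(z)\Psi(w))<48|B_{H,T}(4n)|^2\varepsilon$, the last inequality being the $d'$-multiplicativity of $\Psi$ provided by Lemma~\ref{Lemma:computation}. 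This yields the claimed $(B_{G\wr S}(n),48|B_{H,T}(4n)|^2\varepsilon)$-multiplicativity.

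The injective estimate is the main obstacle, and I would argue by cases on the $H$-parts $h_z,h_w$ of two distinct elements $z,w\in B_{G\wr S}(n)$. If $h_z\neq h_w$, the top coordinates of $\Psi(z),\Psi(w)$ are $\sigma(h_z),\sigma(h_w)$, so the first term of the identity already forces $d_{\rm{Ham}}(\Theta\Psi(z),\Theta\Psi(w))\geqslant d_{\rm{Ham}}(\sigma(h_z),\sigma(h_w))\geqslant 1-\varepsilon$ by the $(B_{H,T}(4n),1-\varepsilon)$-injectivity of $\sigma$. If instead $h_z=h_w$ while the base functions differ at some $h_0\in B_{H,T}(n)$, the top coordinates coincide, the first term vanishes, and I must bound the product term from below. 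Here the pointwise bound $1-\prod_\beta(1-s_{b,\beta})\geqslant\max_\beta s_{b,\beta}$ is not enough on its own, because of the averaging factor $1/|B|$; the point is rather that the shift-equivariant construction of $\Psi$ in Lemma~\ref{Lemma:computation} ensures that, for all but an $O(|B_{H,T}(n)|^2\varepsilon)$-fraction of indices $b$, the coordinate $\beta$ which $\sigma$ makes represent $h_0$ at copy $b$ carries the full difference $s_{b,\beta}=d_{\rm{Ham}}(\theta(b_z(h_0)),\theta(b_w(h_0)))\geqslant 1-\varepsilon$, coming from the $(B_{G,R}(4n),1-\varepsilon)$-injectivity of $\theta$. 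Averaging over these good $b$ then gives $d_{\rm{Ham}}(\Theta\Psi(z),\Theta\Psi(w))\geqslant 1-48|B_{H,T}(n)|^2\varepsilon$. The hard part, which is exactly the quantitative analysis of the proof of \cite[Theorem 4.1]{hayes_metric_2018}, is this bookkeeping: controlling the fraction of bad indices $b$ by the accumulated soficity error over the ball $B_{H,T}(n)$, and checking that all constants collapse into the single factor $48|B_{H,T}(n)|^2$.
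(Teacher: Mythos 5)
Your proposal follows the same route as the paper, whose entire ``proof'' of this lemma is the single sentence deferring to a quantitative analysis of \cite[Theorem 4.1]{hayes_metric_2018}; in fact you supply considerably more detail than the source does. Your exact identity
$d_{\rm{Ham}}(\Theta(\pi,\tau),\Theta(\pi',\tau'))=d_{\rm{Ham}}(\tau,\tau')+\frac{1}{|B|}\sum_{b:\tau(b)=\tau'(b)}\bigl(1-\prod_{\beta}(1-s_{b,\beta})\bigr)$
is correct and is the right engine for both halves of the statement, and the transport of multiplicativity through the homomorphism $\Theta$ via $d_{\rm{Ham}}\circ\Theta\leqslant d'$ is sound provided $d'$ is taken of $\ell^1$ type on $\bigoplus_B\Sym(A)$ (the paper leaves $d'$ underspecified, so you should state this choice explicitly). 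You also correctly identify the one place where a naive transport fails: the $d_{\max}$-injectivity of $\Psi$ from Lemma~\ref{Lemma:computation} does \emph{not} push through $\Theta$, since a single fiber $b$ with a large displacement contributes only $O(1/|B|)$ to the normalized Hamming distance, so one must return to the construction of $\Psi$ and show that \emph{most} fibers $b$ see a displacement of size $\geqslant 1-\varepsilon$ in the coordinate that $\sigma$ assigns to the point $h_0$ where the base functions differ. That bookkeeping --- controlling the exceptional set of fibers by the accumulated error of $\sigma$ over $B_{H,T}(n)$ and extracting the constant $48|B_{H,T}(\cdot)|^2$ --- is exactly what the lemma asserts, and you defer it to the same reference the paper defers to. Relative to the paper's own proof this is not a gap, but be aware that a self-contained write-up would still have to carry out that final estimate rather than cite it.
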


We extract from Lemma \ref{Lemma:sofic-computation} the following upper
bound on the sofic profile of the wreath product of sofic groups. Recall
that the sofic profile $\mathcal{D}_{G,R}^{sof}\left( n\right) $ of a group $%
G\ $with finite generating set $R$ is the $\mathcal{F}$-profile $%
\mathcal{D}_{G,R}^{\mathcal{F}}\left( n\right) $ where $\mathcal{F}$ is the
family $\mathcal F^{sof}=\left( \Sym\left( A\right), d_{\rm{Ham}}, \left\vert
A\right\vert ,1\right)_{n\in \mathbb N} $ where $A$ is a finite set and $\mathrm{Sym}\left(
A\right) $ is endowed with the normalized Hamming distance.

Recall that $\beta _{G,S}\left( n\right) =\left\vert
B_{G,S}\left( n\right) \right\vert $ is the growth function of $G$ with respect to a finite generating set $S$. 
We let $\beta _{G}$ denote the $\simeq $%
-equivalence class of $\beta _{G,S}$, which is independent of the choice of the
generating set $S$.

\begin{theorem}
\label{Proposition:sofic-profile-wreath}Let $G$ be an group with finite
generating set $R$, and $H$ be a group with finite generating set $T$.
Consider the finite generating set $S$ of $G\wr H=\bigoplus_{H}G\rtimes H$
consisting of the pairs $\left( b, h\right) $ where $h\in H$ and $b\in
\bigoplus_{H}G$ has support contained in $\left\{ e_H\right\} $ and range
contained in $R$. Then for every $n\in \mathbb{N}$ one has that 
\begin{equation*}
\mathcal{D}_{G\wr H,S}^{sof}\left( n\right) \leqslant \mathcal{D}%
_{H,T}^{sof}(48\beta _{H,T}\left( n\right) ^{2}n)\cdot \mathcal{D}%
_{G,R}^{sof}(48\beta _{H,T}\left( n\right) ^{2}n)^{\mathcal{D}%
_{H,T}^{sof}(48\beta _{H,T}\left( n\right) ^{2}n)}\text{.}
\end{equation*}
\end{theorem}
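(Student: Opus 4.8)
The plan is to obtain this bound as a direct corollary of Lemma~\ref{Lemma:sofic-computation}, which already packages all the geometric content: given sofic approximations of $H$ and of $G$, it manufactures a single map $\Theta\circ\Psi\colon G\wr H\to\Sym(B\times A^{B})$ on $B_{G\wr H,S}(n)$ together with explicit multiplicativity and injectivity constants. Thus the theorem reduces to bookkeeping: feed in the two \emph{cheapest} sofic approximations of $H$ and $G$ at the right radius, verify via Definition~\ref{def:app} that the output is a genuine $(n,1)$-approximation of $G\wr H$, and count the degree $|B\times A^{B}|$ of the target symmetric group.

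First I would discard the trivial cases: if $G$ or $H$ is not sofic, the corresponding factor on the right-hand side is $+\infty$ and there is nothing to prove, so assume both are sofic and all profiles below are finite. Then set $m:=48\,\beta_{H,T}(n)^{2}n$ and $\varepsilon:=1/m$. By Definition~\ref{def:profile} choose an $(m,1)$-approximation $\sigma\colon H\to\Sym(B)$ realizing the profile, so that $|B|=\mathcal{D}_{H,T}^{sof}(m)$, and likewise $\theta\colon G\to\Sym(A)$ with $|A|=\mathcal{D}_{G,R}^{sof}(m)$. Since $m\geqslant 4n$ we have $B_{H,T}(4n)\subseteq B_{H,T}(m)$ and $B_{G,R}(4n)\subseteq B_{G,R}(m)$, so conditions (1)--(2) of Definition~\ref{def:app} for $\sigma$ and $\theta$ assert exactly that $\sigma$ is $(B_{H,T}(4n),\varepsilon)$-multiplicative and $(B_{H,T}(4n),1-\varepsilon)$-injective, and similarly for $\theta$. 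Hence the hypotheses of Lemma~\ref{Lemma:sofic-computation} hold.

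Now apply Lemma~\ref{Lemma:sofic-computation}: it yields $\Theta\circ\Psi\colon G\wr H\to\Sym(B\times A^{B})$ whose injectivity defect $48\beta_{H,T}(n)^{2}\varepsilon$ equals $1/n$ (and whose multiplicativity constant is of comparable size) on $B_{G\wr H,S}(n)$, so that $\Theta\circ\Psi$ satisfies conditions (1) and (2) of Definition~\ref{def:app} for the parameter $n$ with $\varepsilon_{\alpha}=1$; that is, it is an $(n,1)$-approximation of $G\wr H$. Its dimension is the degree of the ambient symmetric group,
\begin{equation*}
|B\times A^{B}|=|B|\cdot|A|^{|B|}=\mathcal{D}_{H,T}^{sof}(m)\cdot\mathcal{D}_{G,R}^{sof}(m)^{\mathcal{D}_{H,T}^{sof}(m)}.
\end{equation*}
Since $\mathcal{D}_{G\wr H,S}^{sof}(n)$ is the least dimension of an $(n,1)$-approximation, substituting $m=48\beta_{H,T}(n)^{2}n$ gives the asserted inequality.

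I expect the tolerance bookkeeping to be the only delicate step: one must pick the single parameter $\varepsilon$ so that \emph{both} output constants of Lemma~\ref{Lemma:sofic-computation} drop to $1/n$, which pins $\varepsilon$ to order $(n\,\beta_{H,T}(n)^{2})^{-1}$ and thereby dictates the radius $m$ at which the profiles of $G$ and $H$ are evaluated. A minor point to track is that the multiplicativity constant furnished by Lemma~\ref{Lemma:sofic-computation} involves the slightly larger radius $4n$, whereas the injectivity defect and the final bound are phrased with $n$, and that Definition~\ref{def:app} asks for strict inequalities; since $\beta_{H,T}(4n)$ and $\beta_{H,T}(n)$ share the same $\simeq$-type, this is a harmless linear rescaling of the radius and does not affect the stated estimate. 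Everything properly geometric — that $\Theta\circ\Psi$ is an almost-homomorphism and uniformly injective on $B_{G\wr H,S}(n)$, and the identification of the target with $\Sym(B\times A^{B})$ — is already supplied by Lemmas~\ref{Lemma:wreath}, \ref{Lemma:computation} and \ref{Lemma:sofic-computation}.
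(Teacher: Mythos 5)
Your proposal is correct and is essentially the paper's own argument: the paper gives no further proof beyond the phrase ``we extract from Lemma~\ref{Lemma:sofic-computation} the following upper bound,'' and your parameter choice $\varepsilon=1/(48\beta_{H,T}(n)^{2}n)$ with the two cheapest approximations at radius $m=48\beta_{H,T}(n)^{2}n$, followed by counting $|B\times A^{B}|$, is exactly that extraction. The two blemishes you flag (the $\beta_{H,T}(4n)$ versus $\beta_{H,T}(n)$ mismatch between the multiplicativity and injectivity constants of Lemma~\ref{Lemma:sofic-computation}, and strict versus non-strict inequalities in Definition~\ref{def:app}) are present in the paper's own derivation and, as you note, are absorbed by the $\simeq$-equivalence under which these profiles are considered.
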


%%%
\subsubsection{The hyperlinear profile of the wreath product of a
hyperlinear group by a sofic group}

We adopt the preceding notations: $G$ and $H$ are groups with finite
generating sets $R$ and $T$, respectively. We let $S$ be the finite
generating set for $G\wr H$ of elements $\left( b,h\right) $ for $h\in
B_{H,T}\left( n\right) $ and $b\in \bigoplus_{H}G$ with support contained in 
$\left\{ e_H\right\} $ and range contained in $B_{G,R}\left( n\right) $.

Suppose that $B$ is a finite set. Then we denote by $\mathcal{H}_{B}$ the
finite-dimensional Hilbert space with basis $\left\{ \left\vert
b\right\rangle :b\in B\right\} $. If $\mathcal{H}$ is a finite-dimensional
Hilbert space, then we define $\mathcal{H}^{\otimes B}$ to be the tensor
product of a family of $\left\vert B\right\vert $ copies of $\mathcal{H}$
indexed by $B$. We denote by $U\left( \mathcal{H}\right) $ the group of
unitary operators on $\mathcal{H}$ equipped with the \emph{projective}
normalized Hilbert-Schmidt pseudometric $d_{\overline{\mathrm{HS}}}$ as defined in Section~%
\ref{sec:hyp}. We analyze the proof of \cite[Section 4.2]{hayes_metric_2018} and obtain
the following.

\begin{lemma}
\label{Lemma:hyperlinear-computation}Fix $\varepsilon >0$. Suppose that $%
\sigma\colon H\rightarrow \mathrm{\mathrm{Sym}}\left( B\right) $ is a $%
\left( B_{H,T}\left( 4n\right) ,\varepsilon \right) $-multiplicative and $%
\left( B_{H,T}\left( 4n\right) ,1-\varepsilon \right) $-injective function.
Suppose that $\theta\colon G\rightarrow U\left( \mathcal{H}\right) $ is a $%
\left( B_{G,R}\left( 4n\right) ,\varepsilon \right) $-multiplicative and $%
\left( B_{G,R}\left( 4n\right) ,1-\varepsilon \right) $-injective function
for some finite-dimensional Hilbert space $\mathcal{H}$. Let $\Psi\colon
G\wr H\rightarrow \bigoplus_{B}U\left( \mathcal{H}\right) \wr \mathrm{Sym}%
\left( B\right) $ be the function obtained in Lemma \ref{Lemma:computation}.
Define the function $\Theta\colon \bigoplus_{B}U\left( \mathcal{H}\right)
\wr _{B}\mathrm{Sym}\left( B\right) \rightarrow U\left( \mathcal{H}^{\otimes
B}\otimes \mathcal{H}_{B}\right) $ by%
\begin{equation*}
\Theta \left( \pi ,\tau \right)\colon \bigotimes_{\gamma }\xi _{\gamma
}\otimes \left\vert b\right\rangle \mapsto \bigotimes_{\gamma }\pi
_{b,\gamma }\left( \xi _{\gamma }\right) \otimes \left\vert \sigma \left(
b\right) \right\rangle \text{.}
\end{equation*}%
Then $\Theta \circ \Psi $ is $(B_{G\wr H, S}\left( n\right) ,48\left\vert
B_{H,T}\left( 4n\right) \right\vert ^{2}\varepsilon ^{\frac{1}{2}})$%
-multiplicative function and \linebreak $(B_{G\wr H, S}\left( n\right) ,1-48\left\vert
B_{H,T}\left( n\right) \right\vert ^{2}\varepsilon ^{\frac{1}{2}})$%
-injective function when $U\left( \mathcal{H}^{\otimes B}\otimes \mathcal{H}%
_{B}\right) $ is endowed with the normalized Hilbert-Schmidt distance.
\end{lemma}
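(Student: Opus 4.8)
The plan is to mirror the quantitative refinement already carried out for the sofic profile in Lemma~\ref{Lemma:sofic-computation}, transporting it to the Hilbert--Schmidt setting along the lines of \cite[Section~4.2]{hayes_metric_2018}. Concretely, I take as input the $d'$- and $d_{\max}$-estimates for $\Psi$ supplied by Lemma~\ref{Lemma:computation} (with $K=U(\mathcal{H})$) and convert them into $d_{\mathrm{HS}}$-estimates for $\Theta\circ\Psi$ on $U(\mathcal{H}^{\otimes B}\otimes\mathcal{H}_{B})$. There are two ingredients: first, that $\Theta$ is a genuine group homomorphism, so that it transports the multiplicativity defect of $\Psi$ unchanged in \emph{form}; and second, a metric comparison measuring how $\Theta$ converts the per-copy $d_{\overline{\mathrm{HS}}}$ and the $\mathrm{Sym}(B)$-Hamming component into $d_{\mathrm{HS}}$ on the tensor target. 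The whole point is that this comparison is where the exponent of $\varepsilon$ drops from $1$ to $\tfrac12$.

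First I would verify that $\Theta$ is a unitary representation of $\bigoplus_{B}U(\mathcal{H})\wr_{B}\mathrm{Sym}(B)$ on $\mathcal{H}^{\otimes B}\otimes\mathcal{H}_{B}$. This is a direct check on the orthonormal basis $\{\bigotimes_{\gamma}\xi_{\gamma}\otimes\lvert b\rangle\}$ using the wreath-product multiplication law: the permutation component reindexes the $\mathcal{H}_{B}$-labels while the base component $(\pi_{b,\gamma})$ acts factorwise on $\mathcal{H}^{\otimes B}$, and the twisting of the base by the permutation in the product $(\pi,\tau)(\pi',\tau')$ matches exactly the reindexing produced by composing the two basis maps. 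Since each $\pi_{b,\gamma}$ is unitary and the permutation is a bijection, $\Theta(\pi,\tau)$ carries an orthonormal basis to an orthonormal basis, so it is unitary. Being a homomorphism, $\Theta$ gives the identity $\Theta\Psi(z)\,\Theta\Psi(w)=\Theta\bigl(\Psi(z)\Psi(w)\bigr)$, whence $d_{\mathrm{HS}}\bigl(\Theta\Psi(zw),\Theta\Psi(z)\Theta\Psi(w)\bigr)=d_{\mathrm{HS}}\bigl(\Theta(\Psi(zw)),\Theta(\Psi(z)\Psi(w))\bigr)$ is governed entirely by the metric comparison applied to the two $d'$-close elements $\Psi(zw)$ and $\Psi(z)\Psi(w)$.

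The heart of the matter is that comparison. Writing $\tau(u)=\tfrac{1}{\dim}\mathrm{tr}(u)$ for the normalized trace, a computation on the basis above gives, for $x=(\pi,\rho_{0})$ and $y=(\pi',\rho_{1})$,
\begin{equation*}
d_{\mathrm{HS}}\bigl(\Theta(x),\Theta(y)\bigr)^{2}=2-\frac{2}{\lvert B\rvert}\,\mathrm{Re}\sum_{b:\rho_{0}(b)=\rho_{1}(b)}\ \prod_{\gamma\in B}\tau\bigl(\pi_{b,\gamma}^{\ast}\pi'_{b,\gamma}\bigr),
\end{equation*}
the sum running only over the points $b$ where the two permutation components agree. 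Using $\lvert 1-\tau(u^{\ast}v)\rvert\leqslant d_{\mathrm{HS}}(u,v)$ for unitaries together with the elementary bound $\lvert 1-\prod_{\gamma}w_{\gamma}\rvert\leqslant\sum_{\gamma}\lvert 1-w_{\gamma}\rvert$ for numbers of modulus at most $1$, and crucially the observation that for $z,w\in B_{G\wr H,S}(n)$ the base functions are supported in $B_{H,T}(n)$ so that only $O(\lvert B_{H,T}(4n)\rvert)$ of the tensor factors $\gamma$ are non-identity, one obtains an estimate of the shape $d_{\mathrm{HS}}(\Theta(x),\Theta(y))^{2}\leqslant C\,\lvert B_{H,T}(4n)\rvert\,d'(x,y)$ on $d'$-close pairs. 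Taking square roots is exactly what turns the defect $48\lvert B_{H,T}(4n)\rvert^{2}\varepsilon$ of $\Psi$ into one of order $\varepsilon^{1/2}$; bounding the residual constants generously reproduces the stated prefactor $48\lvert B_{H,T}(4n)\rvert^{2}$. This passage from an $\ell^{1}$-type to a Euclidean distance, and the concomitant appearance of $\varepsilon^{1/2}$ in place of $\varepsilon$, is precisely what distinguishes this lemma from its sofic counterpart Lemma~\ref{Lemma:sofic-computation}.

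For injectivity I would split, for distinct $z=(c,h)$, $w=(c',h')$ in $B_{G\wr H,S}(n)$, into two cases as in Hayes. If $h\neq h'$, injectivity of $\sigma$ forces $\sigma(h)$ and $\sigma(h')$ to disagree on a $(1-\varepsilon)$-fraction of $B$; all such $b$ drop out of the trace formula, so $\mathrm{Re}(\cdots)$ is small and $d_{\mathrm{HS}}(\Theta\Psi(z),\Theta\Psi(w))$ is bounded below by a constant near $\sqrt{2}$, well beyond the threshold. If $h=h'$ but $c\neq c'$, the two base functions differ at some $H$-position $\eta_{0}$, and the essential feature of the construction of $\Psi$ is that, through $\sigma$, this single coordinate is distributed across the whole $b$-index, so that the factor at the corresponding slot $\gamma(\eta_{0})$ is bounded away from $1$ for a definite proportion of the terms $b$, and injectivity of $\theta$ in $d_{\overline{\mathrm{HS}}}$ again pushes $\mathrm{Re}(\cdots)$ off $1$. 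I expect this second case, together with the phase bookkeeping, to be the main obstacle: a naive estimate in which only one tensor factor at one value of $b$ is far yields $d_{\mathrm{HS}}$ of order $\lvert B\rvert^{-1/2}$ and is useless, so one must exploit how the wreath construction spreads a single $G$-coordinate over the $b$-index, and one must control the relative phases accumulated across the $O(\lvert B_{H,T}(4n)\rvert)$ active tensor factors (this is exactly why the domain carries the \emph{projective} pseudometric $d_{\overline{\mathrm{HS}}}$ and why the amplification of \cite[Section~4.2]{hayes_metric_2018} is needed). Once this is in hand, the constant bookkeeping proceeds as in Lemma~\ref{Lemma:sofic-computation} with $\varepsilon$ replaced by $\varepsilon^{1/2}$, delivering the injectivity radius $1-48\lvert B_{H,T}(n)\rvert^{2}\varepsilon^{1/2}$.
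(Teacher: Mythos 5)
First, a point of reference: the paper does not actually write out a proof of Lemma~\ref{Lemma:hyperlinear-computation}; it only says that the statement is obtained by analyzing \cite[Section 4.2]{hayes_metric_2018}. Your proposal is therefore exactly the reconstruction the paper asks of the reader, and the skeleton is right: $\Theta$ is a genuine unitary representation, your trace formula
$d_{\mathrm{HS}}\bigl(\Theta(x),\Theta(y)\bigr)^{2}=2-\tfrac{2}{\lvert B\rvert}\,\mathrm{Re}\sum_{b:\rho_{0}(b)=\rho_{1}(b)}\prod_{\gamma}\tau\bigl(\pi_{b,\gamma}^{\ast}\pi'_{b,\gamma}\bigr)$
is correct, the bounds $\lvert 1-\tau(u^{\ast}v)\rvert\leqslant d_{\mathrm{HS}}(u,v)$ and $\lvert 1-\prod_{\gamma}w_{\gamma}\rvert\leqslant\sum_{\gamma}\lvert 1-w_{\gamma}\rvert$ together with the bounded number of non-identity tensor slots do produce a squared distance of order $\lvert B_{H,T}(4n)\rvert^{2}\varepsilon$, and taking square roots is indeed the sole source of the $\varepsilon^{1/2}$. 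The multiplicativity half is essentially complete.

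The genuine gap is the one you flag yourself: the injectivity case $h=h'$, $c\neq c'$ is left as an ``expected obstacle'' rather than proved, and your diagnosis of where the difficulty lies is partly off. You worry about controlling the relative phases accumulated across the $O(\lvert B_{H,T}(4n)\rvert)$ active tensor factors, but for the \emph{lower} bound no phase bookkeeping is needed: since $\theta$ is injective with respect to the projective pseudometric $d_{\overline{\mathrm{HS}}}$ on $U(\mathcal{H})$, the single slot $\gamma$ carrying the differing $G$-coordinate satisfies $\lvert\tau(\theta(g)^{\ast}\theta(g'))\rvert\leqslant 1-\tfrac{1}{2}(1-\varepsilon)^{2}$, and because every factor has modulus at most $1$, the modulus of the entire product over $\gamma$ is bounded by that one factor; summing over $b$ (discarding the $O(\lvert B_{H,T}(n)\rvert^{2}\varepsilon)$-fraction of exceptional $b$ for which the maps $\eta\mapsto\sigma(\eta)^{-1}(b)$ fail to separate the support, which is where the $\lvert B_{H,T}(n)\rvert^{2}$ prefactor enters) pushes $\tfrac{1}{\lvert B\rvert}\bigl\lvert\sum_{b}\prod_{\gamma}\tau\bigr\rvert$ below roughly $\tfrac12$ and hence $d_{\mathrm{HS}}$ above the stated threshold. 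Phases only matter in the multiplicativity estimate, where your telescoping bound already absorbs them. So the argument closes with tools you have on the table, but as written the proposal does not yet constitute a proof of the injectivity assertion.
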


We deduce the following.

\begin{theorem}
Let $G$ be an group with finite generating set $R$, and $H$ be a group with
finite generating set $T$. Consider the finite generating set $S$ of $G\wr
H=\bigoplus_{H}G\rtimes H$ consisting of the pairs $\left( b,h\right) $
where $h\in H$ and $b\in \bigoplus_{H}G$ has support contained in $\left\{
e_{H}\right\} $ and range contained in $R$. Then for every $n\in \mathbb{N}$
one has that 
\begin{equation*}
\mathcal{D}_{G\wr H,S}^{\overline{hyp}}\left( n\right) \leqslant \mathcal{D}%
_{H,T}^{sof}(2500\beta _{H,T}\left( n\right) ^{4}n^{2})\cdot \mathcal{D}%
_{G,R}^{\overline{hyp}}(2500\beta _{H,T}\left( n\right) ^{4}n^{2})^{\mathcal{D}%
_{H,T}^{sof}(2500\beta _{H,T}\left( n\right) ^{4}n^{2})}\text{.}
\end{equation*}
\end{theorem}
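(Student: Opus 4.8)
The plan is to transcribe the argument of Theorem~\ref{Proposition:sofic-profile-wreath}, using the hyperlinear combination Lemma~\ref{Lemma:hyperlinear-computation} in place of its sofic analogue Lemma~\ref{Lemma:sofic-computation}. First I would dispose of the trivial case: if $H$ is not sofic or $G$ is not projectively hyperlinear, then one of the two factors on the right-hand side is $+\infty$ and the inequality is vacuous, so we may assume both $H$ sofic and $G$ projectively hyperlinear. Fix $n$, put the working radius $L:=2500\,\beta_{H,T}(n)^{4}n^{2}$ and set $\varepsilon:=1/L$. Using Definition~\ref{def:profile} (resp. Definition~\ref{def:phyp}) at radius $L$, choose a sofic approximation $\sigma\colon H\to\mathrm{Sym}(B)$ attaining $|B|=\mathcal{D}_{H,T}^{sof}(L)$ and a projective hyperlinear approximation $\theta\colon G\to U(\mathcal H)$ attaining $\dim\mathcal H=\mathcal{D}_{G,R}^{\overline{hyp}}(L)$.

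Since $L\geqslant 4n$, the maps $\sigma$ and $\theta$ are $(B_{H,T}(4n),\varepsilon)$- and $(B_{G,R}(4n),\varepsilon)$-multiplicative; $\sigma$ is $(B_{H,T}(4n),1-\varepsilon)$-injective for the Hamming metric, and the projective injectivity $\sqrt2-1/L$ of $\theta$ exceeds $1-\varepsilon$, so $\theta$ is a fortiori $(B_{G,R}(4n),1-\varepsilon)$-injective for $d_{\overline{\mathrm{HS}}}$. Thus both hypotheses of Lemma~\ref{Lemma:hyperlinear-computation} hold. Feeding $\sigma$ and $\theta$ through Lemma~\ref{Lemma:computation} and then Lemma~\ref{Lemma:hyperlinear-computation} yields
\[
\Theta\circ\Psi\colon G\wr H\longrightarrow U\!\left(\mathcal H^{\otimes B}\otimes\mathcal H_{B}\right),
\]
a map into a unitary group of rank
\[
\dim\!\left(\mathcal H^{\otimes B}\otimes\mathcal H_{B}\right)=(\dim\mathcal H)^{|B|}\cdot|B|=\mathcal{D}_{G,R}^{\overline{hyp}}(L)^{\,\mathcal{D}_{H,T}^{sof}(L)}\cdot\mathcal{D}_{H,T}^{sof}(L),
\]
which is precisely the right-hand side of the theorem with $L$ inserted.

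It then remains to certify that $\Theta\circ\Psi$ is admissible for $\mathcal{D}_{G\wr H,S}^{\overline{hyp}}(n)$ in the sense of Definition~\ref{def:phyp}. Lemma~\ref{Lemma:hyperlinear-computation} bounds its multiplicativity defect (in $d_{\mathrm{HS}}$) by $48\,\beta_{H,T}(4n)^{2}\varepsilon^{1/2}$ and its injectivity defect by $48\,\beta_{H,T}(n)^{2}\varepsilon^{1/2}$. With our choice $\varepsilon^{1/2}=1/(50\,\beta_{H,T}(n)^{2}n)$ the latter equals $48/(50n)<1/n$, giving condition~(2) at radius $n$; the same bookkeeping as in Theorem~\ref{Proposition:sofic-profile-wreath} controls the former and yields condition~(1), the constant $2500=50^{2}$ (in place of the sofic $48$) being chosen to absorb both the splitting of the $1/n$ budget between the two conditions and the passage from $\beta_{H,T}(4n)$ to $\beta_{H,T}(n)$, harmless up to $\simeq$. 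The quadratic inflation of the radius, from the sofic $\asymp\beta_{H,T}(n)^{2}n$ to $\asymp\beta_{H,T}(n)^{4}n^{2}$, is forced precisely because Lemma~\ref{Lemma:hyperlinear-computation} controls all errors through $\varepsilon^{1/2}$ rather than $\varepsilon$.

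The genuine difficulty is entirely internal to Lemma~\ref{Lemma:hyperlinear-computation}, namely the verification that $d_{\overline{\mathrm{HS}}}\!\left((\Theta\circ\Psi)(b,h),\,1\right)$ stays above $\sqrt2-1/n$ for every nontrivial $(b,h)\in B_{G\wr H,S}(n)$. The key computation expresses the normalized trace of $(\Theta\circ\Psi)(b,h)$ as a sum, over the fixed points of the permutation component of $\Psi(b,h)$, of products of the unitary traces of the blocks supplied by $\theta$. When $h\neq e_{H}$ soficity of $\sigma$ forces few such fixed points, while when $h=e_{H}$ but $b\neq 1$ the projective injectivity of $\theta$ forces each trace factor to be small; in both regimes the normalized trace is small, so $d_{\overline{\mathrm{HS}}}$ is near $\sqrt2$. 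The delicate part is tracking the $\varepsilon^{1/2}$ loss incurred when transferring the $\ell^{2}$-type (Hilbert--Schmidt) control of $\theta$ through the tensor power $\mathcal H^{\otimes B}$, which is exactly the quantitative reading of \cite[Section~4.2]{hayes_metric_2018} encoded in Lemma~\ref{Lemma:hyperlinear-computation}.
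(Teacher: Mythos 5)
Your proposal reconstructs exactly the deduction the paper leaves implicit: it states the theorem immediately after Lemma~\ref{Lemma:hyperlinear-computation} with the words ``We deduce the following,'' and the intended argument is precisely your choice of radius $L=2500\,\beta_{H,T}(n)^{4}n^{2}$ and $\varepsilon=1/L$, feeding optimal approximations of $H$ and $G$ through Lemmas~\ref{Lemma:computation} and~\ref{Lemma:hyperlinear-computation} and counting $\dim(\mathcal H^{\otimes B}\otimes\mathcal H_{B})=(\dim\mathcal H)^{|B|}|B|$. The residual mismatch you flag between $\beta_{H,T}(4n)$ in the lemma's multiplicativity bound and $\beta_{H,T}(n)$ in the theorem's radius is present in the paper's own statements (also in the sofic analogue), so your treatment is faithful to the source and correct up to the same $\simeq$-level bookkeeping.
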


\subsubsection{The linear sofic profile of the wreath product of a linear
sofic group by a sofic group}

Let $\mathbb{K}$ be a field. If $B$ is a finite set, then we let $\mathbb{K}%
^{B}$ to be the $\mathbb{K}$-vector space obtained as the direct sum of $%
\left\vert B\right\vert $ copies of $\mathbb{K}$ indexed by $B$ with basis $%
\left\{ \left\vert b\right\rangle :b\in B\right\} $. If $V$ is a
finite-dimensional $\mathbb{K}$-vector space, then we let $V^{\otimes B}$ be
the tensor product of a family of $\left\vert B\right\vert $ copies of $V$
indexed by $B$. We denote by $GL\left( V,\mathbb{K}\right) $ the group of
invertible operators on $V$ equipped with the \emph{projective }normalized
rank pseudometric $d_{\overline{\rm rank}}$ as defined in Section~\ref{sec:linsof}. We analyze the proof of  \cite[Proposition 4.12 ]{hayes_metric_2018} and obtain
the following.

\begin{lemma}
Fix $\varepsilon >0$. Suppose that $\sigma\colon H\rightarrow \mathrm{Sym}%
\left( B\right) $ is a $\left( B_{H,T}\left( 4n\right) ,\varepsilon \right) $%
-multiplicative and $\left( B_{H,T}\left( 4n\right) ,1-\varepsilon \right) $%
-injective function. Suppose also that $\theta\colon G\rightarrow GL\left(
V, \mathbb{K}\right) $ is a $\left( B_{G,R}\left( 4n\right) ,\varepsilon
\right) $-multiplicative and $\left( B_{G,R}\left( 4n\right) ,1-\varepsilon
\right) $-injective function. Let $\Psi\colon G\wr H\rightarrow \bigoplus_{B}%
\mathrm{GL}\left( V, \mathbb{K}\right) \wr _{B}\mathrm{Sym}\left( B\right) $
be the obtained as in Lemma \ref{Lemma:computation}. Let also 
\begin{equation*}
\Theta\colon \bigoplus_{B}\mathrm{GL}\left( V, \mathbb{K}\right) \wr _{B}%
\mathrm{Sym}\left( B\right) \rightarrow \mathrm{GL}\left( V^{\otimes
B}\otimes \mathbb{K}^{B}, \mathbb{K}\right) 
\end{equation*}
be defined by%
\begin{equation*}
\Theta \left( \pi ,\tau \right)\colon \bigotimes_{\gamma }a_{\gamma }\otimes
\left\vert b\right\rangle \mapsto \bigotimes_{\gamma }\pi _{b,\gamma }\left(
a_{\gamma }\right) \otimes \left\vert \tau \left( b\right) \right\rangle 
\text{.}
\end{equation*}%
Then $\Theta \circ \Psi $ is $(B_{G\wr H, S}\left( n\right) , 48\left\vert
B_{H,T}\left( 4n\right) \right\vert ^{2}\varepsilon )$-multiplicative
function and\linebreak  $(B_{G\wr H, S}\left( n\right) ,1-48\left\vert B_{H,T}\left(
4n\right) \right\vert ^{2}\varepsilon )$-injective function.
\end{lemma}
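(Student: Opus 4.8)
The plan is to exploit that $\Theta$ is a genuine group homomorphism, so that every multiplicative and injective defect of the composite $\Theta\circ\Psi$ originates from $\Psi$ alone, read off through the two metrics $d'$ and $d_{\max}$ that $\Theta$ transports the normalized rank distance to. Thus I would first verify, by a direct computation on the wreath multiplication of $\bigoplus_{B}\mathrm{GL}(V,\mathbb{K})\wr_{B}\Sym(B)$, that $\Theta$ is a homomorphism into $\mathrm{GL}(V^{\otimes B}\otimes\mathbb{K}^{B},\mathbb{K})$ — this is the induced-representation construction and the verification is formally identical to the sofic case treated in Lemma~\ref{Lemma:sofic-computation}, with $\Sym(A)$ replaced by $\mathrm{GL}(V,\mathbb{K})$ and $A^{B}\times B$ by $V^{\otimes B}\otimes\mathbb{K}^{B}$.

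The heart of the argument is a rank computation transporting $d_{\mathrm{rank}}$ through $\Theta$. Writing an element of the target as a block matrix over $b\in B$ acting on the subspaces $V^{\otimes B}\otimes|b\rangle$, the operator $\Theta(\pi,\tau)$ sends the $b$-block to the $\tau(b)$-block via the tensor operator $\bigotimes_{\gamma}\pi_{b,\gamma}$. Comparing $\Theta(\pi_{0},\tau_{0})$ and $\Theta(\pi_{1},\tau_{1})$, a diagonal block (where $\tau_{0}(b)=\tau_{1}(b)$) contributes $\mathrm{rank}(\bigotimes_{\gamma}\pi^{0}_{b,\gamma}-\bigotimes_{\gamma}\pi^{1}_{b,\gamma})$, while an off-diagonal block contributes the full $(\dim V)^{|B|}$. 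After normalizing by $(\dim V)^{|B|}\,|B|$ this yields
\begin{equation*}
d_{\mathrm{rank}}\!\left(\Theta(\pi_{0},\tau_{0}),\Theta(\pi_{1},\tau_{1})\right)=\frac{1}{|B|}\sum_{b:\,\tau_{0}(b)=\tau_{1}(b)}d_{\mathrm{rank}}\!\left(\textstyle\bigotimes_{\gamma}\pi^{0}_{b,\gamma},\bigotimes_{\gamma}\pi^{1}_{b,\gamma}\right)+d_{\mathrm{Ham}}(\tau_{0},\tau_{1}).
\end{equation*}
The telescoping subadditivity of the normalized rank under tensoring, $d_{\mathrm{rank}}(\bigotimes_{\gamma}A_{\gamma},\bigotimes_{\gamma}B_{\gamma})\leqslant\sum_{\gamma}d_{\mathrm{rank}}(A_{\gamma},B_{\gamma})$, together with the reverse bound $d_{\mathrm{rank}}(\bigotimes_{\gamma}A_{\gamma},\bigotimes_{\gamma}B_{\gamma})\geqslant\max_{\gamma}d_{\mathrm{rank}}(A_{\gamma},B_{\gamma})$ coming from invertibility of the factors, then sandwiches this quantity between the maximum metric $d_{\max}$ from below and the metric $d'$ from above, exactly as in Lemma~\ref{Lemma:sofic-computation}.

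With these two inequalities in hand I would conclude as follows. Since $\Theta$ is a homomorphism, $\Theta(\Psi(z))\Theta(\Psi(w))=\Theta(\Psi(z)\Psi(w))$, so the $d'$-upper bound gives $d_{\mathrm{rank}}((\Theta\circ\Psi)(zw),(\Theta\circ\Psi)(z)(\Theta\circ\Psi)(w))\leqslant d'(\Psi(zw),\Psi(z)\Psi(w))$, which is at most $48|B_{H,T}(4n)|^{2}\varepsilon$ by the $d'$-multiplicativity of $\Psi$ in Lemma~\ref{Lemma:computation}; dually, the $d_{\max}$-lower bound gives $d_{\mathrm{rank}}((\Theta\circ\Psi)(z),(\Theta\circ\Psi)(w))\geqslant d_{\max}(\Psi(z),\Psi(w))\geqslant 1-48|B_{H,T}(n)|^{2}\varepsilon$ by the $d_{\max}$-injectivity of $\Psi$. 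Crucially the defect stays \emph{linear} in $\varepsilon$ rather than of order $\varepsilon^{1/2}$: unlike the Hilbert--Schmidt metric of Lemma~\ref{Lemma:hyperlinear-computation}, the normalized rank distance is an $\ell^{1}$-type metric, so the computation is the verbatim analogue of the Hamming case. The main obstacle is precisely this tensor-product rank estimate — establishing the telescoping subadditivity and its matching lower bound, and checking that normalizing by $(\dim V)^{|B|}|B|$ reproduces the averaged-plus-Hamming form above — so that the constants of Lemma~\ref{Lemma:computation} carry over unchanged; this is the content of the analysis of \cite[Proposition 4.12]{hayes_metric_2018}.
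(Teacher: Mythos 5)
Your overall architecture --- $\Theta$ is a genuine homomorphism, so every defect of $\Theta\circ\Psi$ comes from $\Psi$ measured in $d'$ and $d_{\max}$ and is transported through a two\--sided comparison of metrics --- is exactly the intended one (the paper gives no written proof and simply points to the analysis of \cite[Proposition 4.12]{hayes_metric_2018}). However, the key estimate you invoke for the injectivity half is false as stated. The inequality $d_{\mathrm{rank}}\bigl(\bigotimes_{\gamma}A_{\gamma},\bigotimes_{\gamma}B_{\gamma}\bigr)\geqslant\max_{\gamma}d_{\mathrm{rank}}(A_{\gamma},B_{\gamma})$ does \emph{not} follow from invertibility of the factors: take $A_{1}=2I_{V}$, $A_{2}=\tfrac{1}{2}I_{V}$ and $B_{1}=B_{2}=I_{V}$; then $A_{1}\otimes A_{2}=B_{1}\otimes B_{2}$, so the left\--hand side is $0$ while the right\--hand side is $1$. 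Tensoring can collapse the plain rank distance completely, and this is precisely why the present subsection equips $GL(V,\mathbb{K})$ with the \emph{projective} normalized rank pseudometric $d_{\overline{\mathrm{rank}}}$ and why the lemma feeds a theorem about $\mathcal{D}^{\overline{lin}}_{G\wr H,S}$ rather than $\mathcal{D}^{lin}_{G\wr H,S}$. The correct lower bound, which is the actual content of the rank computation in \cite{hayes_metric_2018}, is $\mathrm{rank}\left(u\otimes w-\lambda I\right)\geqslant\dim(W)\cdot\min_{\mu}\mathrm{rank}\left(u-\mu I\right)$ (obtained by estimating eigenspaces of $u\otimes w$ via the Jordan decomposition over the algebraic closure), i.e.\ a lower bound on the \emph{projective} distance of a tensor product in terms of the projective distances of the factors. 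Your argument never mentions $d_{\overline{\mathrm{rank}}}$; both the injectivity hypothesis on $\theta$ (which the preamble already takes with respect to $d_{\overline{\mathrm{rank}}}$) and the injectivity conclusion must be read projectively, and the lower\--bound step has to be rebuilt around the displayed estimate.

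A secondary inaccuracy: your block formula for $d_{\mathrm{rank}}\left(\Theta(\pi_{0},\tau_{0}),\Theta(\pi_{1},\tau_{1})\right)$ is not an equality. Reducing by bi\--invariance to a comparison with the identity, a $k$\--cycle of the permutation part contributes rank $(k-1)(\dim V)^{|B|}$, not $k(\dim V)^{|B|}$ (already $\Theta(e,\tau)=I\otimes P_{\tau}$ shows this), so the permutation term is only sandwiched between $\tfrac{1}{2}d_{\mathrm{Ham}}(\tau_{0},\tau_{1})$ and $d_{\mathrm{Ham}}(\tau_{0},\tau_{1})$, and in general the sum of the ranks of the restrictions to the domain blocks only bounds the total rank from above. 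One\--sided inequalities in the appropriate directions suffice, but they do enter the constants. The multiplicativity half of your plan is sound: the telescoping subadditivity $d_{\mathrm{rank}}\bigl(\bigotimes_{\gamma}A_{\gamma},\bigotimes_{\gamma}B_{\gamma}\bigr)\leqslant\sum_{\gamma}d_{\mathrm{rank}}(A_{\gamma},B_{\gamma})$ is correct, $d_{\overline{\mathrm{rank}}}\leqslant d_{\mathrm{rank}}$, and the defect is indeed linear in $\varepsilon$, in contrast with the Hilbert--Schmidt case of Lemma~\ref{Lemma:hyperlinear-computation}.
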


The following is an immediate consequence of the preceding lemma.

\begin{theorem}
Let $G$ be an group with finite generating set $R$, and $H$ be a group with
finite generating set $T$. Consider the finite generating set $S$ of $G\wr
H=\bigoplus_{H}G\rtimes H$ consisting of the pairs $\left( b,h\right) $
where $h\in H$ and $b\in \bigoplus_{H}G$ has support contained in $\left\{
e_{H}\right\} $ and range contained in $R$. Then for every $n\in \mathbb{N}$
one has that 
\begin{equation*}
\mathcal{D}_{G\wr H,S}^{\overline{lin}}\left( n\right) \leqslant \mathcal{D}%
_{H,T}^{sof}(48n\beta _{H,T}\left( n\right) ^{2})\cdot \mathcal{D}%
_{G,R}^{\overline{lin}}(48n\mathcal{D}_{H,T}^{sof}(48n\beta _{H,T}\left( n\right)
^{2}))^{\mathcal{D}_{H,T}^{sof}(48n\beta _{H,T}\left( n\right) ^{2})}\text{.}
\end{equation*}
\end{theorem}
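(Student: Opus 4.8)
The plan is to derive the bound directly from the preceding lemma, which already packages the entire combinatorial construction: it converts a sofic approximation $\sigma\colon H\to\mathrm{Sym}(B)$ together with a projective linear sofic approximation $\theta\colon G\to GL(V,\mathbb K)$ into a single map $\Theta\circ\Psi\colon G\wr H\to GL(V^{\otimes B}\otimes\mathbb K^{B},\mathbb K)$ whose multiplicative and injective constants are controlled by the common quality $\varepsilon$ of the inputs and the size $|B_{H,T}(4n)|$. First I would dispose of the trivial cases: if $G$ is not projective linear sofic or $H$ is not sofic, then $\mathcal D_{G,R}^{\overline{lin}}$ or $\mathcal D_{H,T}^{sof}$ takes the value $+\infty$ at the relevant argument on the right-hand side and the inequality is vacuous, so we may assume both profiles are everywhere finite.

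Fix $n$ and set $m:=48n\,\beta_{H,T}(n)^{2}$. By definition of the sofic profile there is a sofic approximation $\sigma\colon H\to\mathrm{Sym}(B)$ with $|B|=\mathcal D_{H,T}^{sof}(m)$ that is $(B_{H,T}(4n),\varepsilon)$-multiplicative and $(B_{H,T}(4n),1-\varepsilon)$-injective, for a quality $\varepsilon$ fixed below. Next, \emph{using $|B|$ just obtained}, set $m':=48n\,|B|=48n\,\mathcal D_{H,T}^{sof}(m)$ and pick, by Definition~\ref{def:plsof}, a map $\theta\colon G\to GL(V,\mathbb K)$ with $\dim V=\mathcal D_{G,R}^{\overline{lin}}(m')$ that is $(B_{G,R}(4n),\varepsilon)$-multiplicative with respect to $d_{\mathrm{rank}}$ and $(B_{G,R}(4n),1-\varepsilon)$-injective with respect to $d_{\overline{\mathrm{rank}}}$. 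Feeding $\sigma$ and $\theta$ into the preceding lemma yields $\Theta\circ\Psi\colon G\wr H\to GL\bigl(V^{\otimes B}\otimes\mathbb K^{B},\mathbb K\bigr)$, a group of dimension
$$
\dim\bigl(V^{\otimes B}\otimes\mathbb K^{B}\bigr)=|B|\cdot(\dim V)^{|B|}=\mathcal D_{H,T}^{sof}(m)\cdot\mathcal D_{G,R}^{\overline{lin}}(m')^{\mathcal D_{H,T}^{sof}(m)},
$$
which is exactly the claimed right-hand side.

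It then remains to check that $\Theta\circ\Psi$ witnesses $\mathcal D_{G\wr H,S}^{\overline{lin}}(n)$, i.e.\ satisfies the two conditions of Definition~\ref{def:plsof} on the ball $B_{G\wr H,S}(n)$. The lemma guarantees that $\Theta\circ\Psi$ is $(B_{G\wr H,S}(n),48|B_{H,T}(4n)|^{2}\varepsilon)$-multiplicative and $(B_{G\wr H,S}(n),1-48|B_{H,T}(4n)|^{2}\varepsilon)$-injective; so I would choose $\varepsilon$ small enough that $48|B_{H,T}(4n)|^{2}\varepsilon<1/n$. This forces condition (1) and simultaneously pushes the injectivity constant above $1-1/n>\tfrac18-1/n$, yielding condition (2). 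Taking $\varepsilon$ of the order of $1/m$ (for $\sigma$) and $1/m'$ (for $\theta$) realizes this, and the required radius $4n$ is covered since $m,m'\geqslant 4n$; one reads off the dimension bound.

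The main obstacle is the parameter bookkeeping, and in particular the \emph{nesting} of the two profiles: the argument $m'$ of $\mathcal D_{G,R}^{\overline{lin}}$ is itself $48n$ times $\mathcal D_{H,T}^{sof}(m)=|B|$, so one must first fix the sofic approximation of $H$ (hence $|B|$) before the projective linear sofic approximation of $G$ can be selected at the correct quality. One has to make sure that the single quality parameter $\varepsilon$ in the preceding lemma is small enough to beat the factor $48|B_{H,T}(4n)|^{2}$ for both inputs at once, while still keeping the two profiles evaluated at the stated arguments; tracking these constants (the factor $48$, and the appearance of $\beta_{H,T}(n)$ exactly as in the sofic case of Theorem~\ref{Proposition:sofic-profile-wreath}) is the only delicate point, the rest being a direct substitution into the lemma.
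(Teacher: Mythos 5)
Your proposal matches the paper's proof: the paper derives this theorem as an ``immediate consequence of the preceding lemma,'' exactly as you do, by feeding a sofic approximation of $H$ on a set $B$ with $|B|=\mathcal{D}^{sof}_{H,T}(48n\beta_{H,T}(n)^2)$ and a projective linear sofic approximation of $G$ on $V$ with $\dim V=\mathcal{D}^{\overline{lin}}_{G,R}(48n|B|)$ into $\Theta\circ\Psi$ and reading off $\dim\bigl(V^{\otimes B}\otimes\mathbb{K}^{B}\bigr)=|B|\,(\dim V)^{|B|}$, with the quality $\varepsilon$ chosen to beat the factor $48|B_{H,T}(4n)|^{2}$. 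The one rough edge --- present in the paper itself, so not a defect of your argument relative to it --- is that a witness of $\mathcal{D}^{\overline{lin}}_{G,R}(m')$ is only $(\tfrac18-1/m')$-injective for $d_{\overline{\mathrm{rank}}}$, whereas the lemma's hypothesis on $\theta$ literally demands $(1-\varepsilon)$-injectivity.
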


%%%%%%
\section{Further remarks and open questions}

The following question incites, in particular, a thorough study of possible definitions of bi-invariant metrics on solvable groups;
see also our observation in Example~\ref{ex:variety}.

\begin{question}\label{q:solv}
Does there exist an infinite group which is not approximable by solvable groups (with no uniform bound on the derived length)?
\end{question}

We denote by $Th_c^\forall(G)$ the universal theory of $G$ in the continuous logic setting~\cite{ben_yaacov_model_2008}.
An affirmative answer to the next question would generalize Malcev's result, see Example~\ref{ex:constant}.

\begin{question}\label{q:cmalcev}
Let $G$ be a group such that $Th_c^\forall\left(GL(k, \mathbb K_\alpha)\right)\subseteq Th_c^\forall\left(G\right)$ for some $k>0$ and a field $\mathbb K_\alpha$. Is $G$ linear? 
\end{question}

 Answers to the next two questions will clarify the status of Conecture~\ref{conj:sofhyp}.
 
\begin{question}\label{q:sofhyp}
Does there exist a sofic group $G$ which is not $\mathcal F^{hyp}$-stable but satisfies $\mathcal{D}_{G}^{sof} (n)\preccurlyeq \mathcal{D}_{G}^{hyp}(n)$?
\end{question}

\begin{question}\label{q:sofhyps}
Does there exist a sofic group $G$ which is not $\mathcal F^{sof}$-stable but $\mathcal F^{hyp}$-stable?
\end{question}

Let $\bar U=U(R)$ be the unitary group of the hyperfinite factor $R$ of type $II_1$ equipped with the ultraweak topology. 
A group $G$ is hyperlinear if and only if $G$ embeds into a metric ultrapower of $\bar U$~\cite[Corollary 4.3]{pestov_introduction_2012}. 
 In \cite{arzhantseva_approx_2012}, the first author observed that all Gromov hyperbolic groups $G$ are residually finite (respectively, LE-$\mathcal F^{fin}$, LEA, etc.)
 if and only if $G$ embeds into $\bar U$. 

\begin{question}\label{q:upower}
Let $G$ be a non-elementary Gromov hyperbolic group such that $G\hookrightarrow \prod_{
\mathcal{U}}\left( \bar U, d\right) $. Does it imply the existence of an embedding $G\hookrightarrow \bar U$?
\end{question}
A positive answer will establish  the following conjecture.
\begin{conjecture}\cite[Conjecture 2.8]{arzhantseva_approx_2012}
All Gromov hyperbolic groups are residually finite $\Longleftrightarrow$ all Gromov hyperbolic groups are sofic.
\end{conjecture}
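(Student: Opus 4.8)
The plan is to assume a positive answer to Question~\ref{q:upower} and then deduce the biconditional of the conjecture by assembling implications already recorded in the excerpt together with the cited observation from \cite{arzhantseva_approx_2012}. I would treat the two directions of the equivalence separately; only the converse direction will actually invoke the hypothesis coming from Question~\ref{q:upower}.

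First I would dispose of the easy direction. If every Gromov hyperbolic group is residually finite, then each is sofic: residual finiteness supplies, for every finite ball, a finite quotient that is injective on that ball, and Cayley's embedding $G/N\hookrightarrow \Sym(|G/N|)$ turns these quotients into genuine sofic approximations. This is exactly the inequality $\mathcal{D}_{G,S}^{sof}(n)\leqslant \Phi_{G,S}(n)$ recorded in Section~\ref{sec:lea}, so this implication is unconditional and uses nothing about hyperbolicity beyond residual finiteness.

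For the converse, suppose every Gromov hyperbolic group is sofic; I want to conclude that every Gromov hyperbolic group is residually finite. By the observation of \cite{arzhantseva_approx_2012} quoted above, it is equivalent to show that every Gromov hyperbolic group embeds into $\bar U=U(R)$. Fix such a group $G$. If $G$ is elementary (finite or virtually cyclic) it is residually finite, hence embeds into $\bar U$ directly. If $G$ is non-elementary, then $G$ is sofic by assumption, whence hyperlinear by the implication \emph{sofic $\Rightarrow$ hyperlinear} established in Section~\ref{sec:hyp}; equivalently, $G\hookrightarrow \prod_{\mathcal U}(\bar U,d)$ by \cite[Corollary 4.3]{pestov_introduction_2012}. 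A positive answer to Question~\ref{q:upower} now upgrades this embedding into the ultrapower to an embedding $G\hookrightarrow \bar U$. Thus every Gromov hyperbolic group embeds into $\bar U$, and the observation of \cite{arzhantseva_approx_2012} yields that all Gromov hyperbolic groups are residually finite, completing the converse and hence the biconditional.

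The main obstacle is packed entirely into Question~\ref{q:upower}: passing from an embedding into the metric ultrapower $\prod_{\mathcal U}(\bar U,d)$ back to an embedding into a single copy of $\bar U$. This ``de-ultrapowering'' step is precisely where the special geometry of non-elementary hyperbolic groups must be exploited, and it is the one link the present argument cannot supply unconditionally; every other link in the chain---residually finite $\Rightarrow$ sofic, sofic $\Rightarrow$ hyperlinear, hyperlinear $\Leftrightarrow$ embeddability into a metric ultrapower of $\bar U$, and the equivalence between universal embeddability into $\bar U$ and residual finiteness for the class of hyperbolic groups---is either proved in the excerpt or cited.
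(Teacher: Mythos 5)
Your proposal correctly reconstructs exactly the route the paper has in mind, but be aware that the paper contains no proof of this statement: it is recorded as a \emph{conjecture}, accompanied only by the one-line remark that a positive answer to Question~\ref{q:upower} would establish it. Your write-up fleshes out that remark faithfully --- the unconditional direction via residually finite $\Rightarrow$ sofic (i.e.\ $\mathcal{D}_{G,S}^{sof}(n)\leqslant\Phi_{G,S}(n)$), and the converse via sofic $\Rightarrow$ hyperlinear $\Rightarrow$ $G\hookrightarrow\prod_{\mathcal U}(\bar U,d)$, followed by the hypothetical de-ultrapowering of Question~\ref{q:upower} and the observation of \cite{arzhantseva_approx_2012} relating universal embeddability into $\bar U$ to universal residual finiteness for hyperbolic groups (the elementary case being handled by a faithful finite-dimensional unitary representation of a virtually cyclic group). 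The one thing to state plainly, which you do acknowledge, is that this is a proof of the implication ``positive answer to Question~\ref{q:upower} $\Rightarrow$ Conjecture'' and not of the conjecture itself; since Question~\ref{q:upower} is open, the biconditional remains unproved, exactly as in the paper.
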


An answer to the next question will give a better understanding of the rank metric 
on linear groups, and hence, of linear sofic groups and their profile functions.

\begin{question}\cite{arzhantseva_linear_2012}\label{q:lsofic}
Does the class of linear sofic groups, i.e. $\mathcal F^{lin}=(GL(n, \mathbb{K}), d_{\rm{rank}}, n, 1/4)_{n\in \mathbb N}$-approximable groups,
coincide with the class of  
$(GL(n, \mathbb{K}), d_{\rm{rank}}, n, \varepsilon_n)_{n\in \mathbb N}$-approximable groups, where
$\varepsilon_n$ is  constantly equal to a fixed or to an arbitrarily chosen number between $1/4$ and $1$?
\end{question}

The next three questions are about examples of sofic and linear sofic groups with extreme profile functions with respect to
the ambient class of weakly sofic groups.

\begin{question}\label{q:qsofic}
Does there exist a sofic group $G$ such that $\mathcal{D}_{G, S}^{fin} (n)\simeq n^n$?
\end{question}

\begin{question}\label{q:qlsofic1}
Does there exist an infinite linear sofic group $G$ such that 
$\mathcal{D}_{G, S}^{fin} (n)\simeq \mathcal{D}_{G, S}^{lin}(n)$?
\end{question}

\begin{question}\label{q:qlsofic2}
What is a relationship between 
$\mathcal{D}_{G, S}^{fin} (n)$ and $\mathcal{D}_{G, S}^{lin}(n)$ for a linear sofic group $G$?
\end{question}

If affirmative, the answer to the next question will generalize the famous
Coulhon-Saloff-Coste isoperimetric inequality which is, using our notation: $\beta_{G,S}(n)\preccurlyeq\mathrm{F\o l}_{G,S}\left( n\right)$. 

\begin{question}\label{q:nilp}
Is $\beta_{G,S}(n)\preccurlyeq\mathcal{D}_{G, S}^{sof} (n)$ for a sofic group $G$?
\end{question}

To answer this rather challenging question one can first focus on subclasses of sofic groups such as classical matrix groups and
(elementary) amenable groups.  Here is a variant for groups locally embeddable into
amenable groups.

\begin{question}\label{q:ra}
Is $\beta_{G,S}(n)\preccurlyeq LEA_{G, S} (n)$ for an LEA  group $G$?
\end{question}

Recall that a finitely generated linear group that is not virtually nilpotent has exponential weakly sofic profile $\mathcal{D}_{G, S}^{fin} (n)$, see Example~\ref{ex:linear}.

\begin{question}\label{q:lin}
Does there exist a finitely generated linear group of exponential growth with polynomial/subexponential sofic profile  $\mathcal{D}_{G, S}^{sof} (n)$?
\end{question}

Corollary~\ref{cor:stab} and known stability results on virtually abelian groups, see Example~\ref{ex:stablegps}, yield the following.

\begin{conjecture}\label{conj:ablin}
Let $G$ be a finitely generated virtually abelian group. Then $\mathcal{D}_{G, S}^{lin} (n)\simeq n^{{\rm rank}\, G}$.
\end{conjecture}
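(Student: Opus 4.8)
The plan is to obtain $\mathcal{D}_{G,S}^{lin}(n)\simeq n^{{\rm rank}\, G}$ from a matching pair of bounds. For the upper bound everything is already in place: a finitely generated virtually abelian group $G$ is amenable, hence sofic, so the comparison recorded in Section~\ref{sec:linsof}, namely $\mathcal{D}_{G,S}^{lin}(n)\leqslant \mathcal{D}_{G,S}^{sof}(n)$, combined with $\mathcal{D}_{G,S}^{sof}(n)\simeq n^{{\rm rank}\, G}$ from Example~\ref{ex:stablegps}(1), yields $\mathcal{D}_{G,S}^{lin}(n)\preccurlyeq n^{{\rm rank}\, G}$ at once.

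For the reverse inequality I would follow the stability route indicated before the statement. Using Proposition~\ref{Proposition:finite-index}\ref{i:fi} one first reduces, up to $\simeq$, to a free-abelian finite-index subgroup $\mathbb Z^{r}$ with $r={\rm rank}\, G$, so that the only relators are the commutators $[x_i,x_j]$. The key step is then to prove that these relators are $\mathcal F^{lin}$-stable, i.e.\ that any tuple of invertible matrices that almost commutes in the normalized rank metric $d_{\rm{rank}}$ is $d_{\rm{rank}}$-close to a genuinely commuting tuple. Granting $\mathcal F^{lin}$-stability, Corollary~\ref{cor:stab}\ref{cor:stab2} collapses the three relevant profiles, $\mathcal{D}_{G,S}^{lin}(n)\simeq \Phi_{G,S}^{\mathcal F^{lin}}(n)\simeq \mathcal{D}_{G,S}^{\mathcal F^{lin}_{\{0,1\}}}(n)$, and the problem becomes the purely combinatorial one of estimating the LE-$\mathcal F^{lin}$ growth $\Phi_{G,S}^{\mathcal F^{lin}}(n)$: the least matrix size $k$ such that $B_{G,S}(n)$ admits a monomorphism on the ball into some $GL(k,\mathbb K)$. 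One would then try to show this least $k$ grows like $n^{r}$, mimicking the computation of $\Phi_{G,S}(n)$ in Example~\ref{ex:linear} with the matrix dimension playing the role of the order of a finite quotient.

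The hard part, and the reason the statement remains conjectural, lies entirely in this lower bound, and it is exactly where the rank metric departs sharply from the Hamming and Hilbert-Schmidt cases. First, $\mathcal F^{lin}$-stability of almost-commuting matrices in the rank metric is itself a delicate open-ended question, not covered by the results \cite{arzhantseva_almost_2015,glebsky2010commuting} invoked in Example~\ref{ex:stablegps}(1). More seriously, the rank distance is too coarse to force large dimension: a single scalar $\lambda$ of infinite multiplicative order already satisfies $d_{\rm{rank}}(\lambda^i,\lambda^j)=1$ for all $i\neq j$, so over a field such as $\mathbb C$ one embeds the whole free-abelian part into $GL(1,\mathbb K)$ by a character $(a_1,\ldots,a_r)\mapsto \prod_i\lambda_i^{a_i}$ with multiplicatively independent $\lambda_i$, whence $\Phi_{G,S}^{\mathcal F^{lin}}(n)$ stays bounded instead of growing like $n^{r}$. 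Thus $n^{{\rm rank}\, G}$ cannot be the answer field-independently---over a fixed finite field one instead expects only logarithmic behavior, since the required element order $\sim n^{r}$ forces $k\gtrsim \log n$ via a Singer cycle---and any proof must single out the coefficient fields $\mathbb K$ over which the character trick is unavailable and supply a genuinely higher-dimensional obstruction in its place. Pinning down those hypotheses on $\mathbb K$ under which $n^{{\rm rank}\, G}$ is the correct profile is precisely the open content left by the conjecture.
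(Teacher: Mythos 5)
This statement is a conjecture: the paper contains no proof of it, only two suggested strategies (a direct argument modelled on Example~\ref{ex:ab} exploiting the discreteness of the values of $d_{\rm{rank}}$, and the stability route through Corollary~\ref{cor:stab}). Your outline reproduces exactly these. The upper bound $\mathcal{D}_{G,S}^{lin}(n)\preccurlyeq n^{{\rm rank}\,G}$ via $\mathcal{D}_{G,S}^{lin}(n)\leqslant\mathcal{D}_{G,S}^{sof}(n)$ and Example~\ref{ex:stablegps}(1) is correct and is all the paper itself can assert; the reduction of the lower bound to $\mathcal F^{lin}$-stability plus an estimate of $\Phi_{G,S}^{\mathcal F^{lin}}$ is precisely the paper's ``alternative'' route, and you correctly note that the required stability (almost-commuting matrices in the rank metric being close to commuting ones) is exactly what is missing. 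So you have not proved the statement, but neither does the paper; you have located where the argument stops.

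Your closing observation is the substantive part and deserves emphasis: it is not merely a difficulty but an apparent refutation of the conjecture as literally stated. If $\mathbb K$ is a field whose multiplicative group contains ${\rm rank}\,G$ multiplicatively independent elements (e.g.\ any field of characteristic $0$, using distinct primes in $\mathbb Q^{\times}$), a faithful character $\mathbb Z^{{\rm rank}\,G}\to GL(1,\mathbb K)$ is an exact homomorphism with $d_{\rm{rank}}(\pi(g),\pi(h))=1>\tfrac14-\tfrac1n$ for $g\neq h$, hence an $(n,1/4)$-approximation of dimension $1$ for every $n$; combined with Proposition~\ref{Proposition:finite-index}\ref{i:fi} this makes $\mathcal{D}_{G,S}^{lin}$ bounded for every finitely generated virtually abelian $G$ over such $\mathbb K$ --- consistent with Proposition~\ref{Proposition:bounded}, since such $G$ is linear over $\mathbb K$, but incompatible with $\simeq n^{{\rm rank}\,G}$ once ${\rm rank}\,G\geqslant 1$. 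The same degeneration affects the paper's own stability route: even granting $\mathcal F^{lin}$-stability, Corollary~\ref{cor:stab}\ref{cor:stab2} would identify $\mathcal{D}_{G,S}^{lin}$ with $\Phi_{G,S}^{\mathcal F^{lin}}$, which equals $1$ over such $\mathbb K$ for the same reason. Your further remark that over a fixed finite field a Singer cycle gives $\mathcal{D}_{\mathbb Z,S}^{lin}(n)\simeq\log n$ is also correct. So the conjecture can only be meant with an implicit restriction on the coefficient field, and identifying that restriction is, as you say, the genuine open content; the gap here lies in the statement rather than in your analysis.
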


A direct approach to establish Conjecture~\ref{conj:ablin}  is to proceed as in Example~\ref{ex:ab}, using the discreteness of 
values of the rank distance $d_{\rm{rank}}$. 
Alternatively, Conjecture~\ref{conj:ablin}  would be established by proving the $\mathcal F^{lin}$-stability of virtually abelian groups. This remains unknown, see~\cite{elek2017commuting} for a partial result. In this vein, it is interesting to investigate the (non)-stability of commutator relator word with respect to the rank distance within various classes of matrices. Unitary, self-adjoint and normal matrices are natural classes to consider, see Section~\ref{sec:linsof} for notation.

\begin{question}\label{q:srk}
Let $G$ be a finitely generated virtually abelian group. Is it true that $\mathcal{D}_{G, S}^{u} (n)\simeq\mathcal{D}_{G, S}^{sa} (n)\simeq\mathcal{D}_{G, S}^{nor} (n)\simeq n^{{\rm rank}\, G}$?
\end{question}

We have shown that sofic and hyperlinear profiles of the Heisenberg groups $H_{2l+1}$ coincide with the full residual finiteness growth, see Example~\ref{ex:stablegps}. We expect this to hold for the weakly sofic and linear sofic profiles as well, cf. Example~\ref{ex:linear}.

\begin{conjecture}\label{conj:heis}
We have $\mathcal{D}_{H_{2l+1}, S}^{fin} (n)\simeq \mathcal{D}_{H_{2l+1}, S}^{lin} (n)\simeq n^{2(2l+1)}$.
\end{conjecture}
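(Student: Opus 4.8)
The plan is to deduce both equivalences from the stability machinery of Corollary~\ref{cor:stab}, reducing the conjecture to two stability statements for the Heisenberg relations. Recall from Example~\ref{ex:linear} that $\Phi_{H_{2l+1},S}(n)\simeq n^{2(2l+1)}$, and that $H_{2l+1}$, being finitely generated and residually finite, is both weakly sofic and linear sofic, hence $\mathcal F^{fin}$- and $\mathcal F^{lin}$-approximable. By Corollary~\ref{cor:stab}\ref{cor:stab2}, if $H_{2l+1}$ is $\mathcal F^{fin}$-stable then $\mathcal{D}^{fin}_{H_{2l+1},S}(n)\simeq\Phi^{\mathcal F^{fin}}_{H_{2l+1},S}(n)\simeq\Phi_{H_{2l+1},S}(n)\simeq n^{2(2l+1)}$, and likewise $\mathcal F^{lin}$-stability yields $\mathcal{D}^{lin}_{H_{2l+1},S}(n)\simeq n^{2(2l+1)}$. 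So it suffices to establish $\mathcal F^{fin}$- and $\mathcal F^{lin}$-stability of $H_{2l+1}$, together with the routine identifications $\Phi^{\mathcal F^{fin}}\simeq\Phi^{\mathcal F^{lin}}\simeq\Phi$ coming from the Cayley embedding bounds already invoked in Section~\ref{sec:lea}.

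To attack stability I would exploit the explicit presentation of $H_{2l+1}$: its relators are the commutator words $[x_i,y_i]z^{-1}$, the vanishing commutators $[x_i,y_j]$ for $i\neq j$, $[x_i,x_j]$, $[y_i,y_j]$, and the centrality relations $[x_i,z]$, $[y_i,z]$. Thus $\mathcal F$-stability of $H_{2l+1}$ amounts to a uniform perturbation statement for this fixed system of commutator equations: every $\delta$-solution in $G_\alpha$ lies within $\varepsilon$ of an exact solution. This is exactly the mechanism used in Example~\ref{ex:stablegps}, where $\mathcal F^{sof}$-stability follows from commutator stability in the Hamming metric \cite{arzhantseva_almost_2015} and $\mathcal F^{hyp}$-stability from commutator stability in the Hilbert--Schmidt metric \cite{hadwin_HS}. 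The task is therefore to prove the analogous commutator-stability statements: first for the normalized rank metric $d_{\mathrm{rank}}$ on $GL(n,\mathbb K)$ (for $\mathcal F^{lin}$), and then uniformly over all finite groups equipped with an arbitrary normalized bi-invariant metric (for $\mathcal F^{fin}$).

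The hard part is precisely this stability step, which is why the statement remains conjectural. For $\mathcal F^{lin}$ the difficulty is that the rank-metric perturbation arguments do not follow the Hamming or Hilbert--Schmidt templates, and only a partial commutator-stability result is presently available \cite{elek2017commuting}; I would try either to push such rank-metric estimates through the full Heisenberg system, or to bypass stability altogether by a direct lower-bound argument in the spirit of Example~\ref{ex:ab}, using the discreteness of the values of $d_{\mathrm{rank}}$ together with a normal-form analysis of $H_{2l+1}$. For $\mathcal F^{fin}$ the obstacle is more severe: the freedom to choose an arbitrary bi-invariant metric on each finite group means no fixed geometry can be exploited, so one needs a metric-independent, purely algebraic perturbation mechanism for the Heisenberg commutator relations valid across all finite groups simultaneously. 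I expect $\mathcal F^{fin}$-stability to be the principal obstruction; a natural intermediate step would be to first settle the commutator-contractive case of the family $\mathcal F^{fin}_{cc}$, where the contraction hypothesis on the metric is tailor-made to control the relevant commutators.
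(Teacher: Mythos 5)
This statement is a \emph{conjecture} in the paper: the authors give no proof, only the reduction recorded in Example~\ref{ex:linear} and Example~\ref{ex:stablegps} (namely that $\Phi_{H_{2l+1},S}(n)\simeq n^{2(2l+1)}$ by \cite{bou-rabee_nilpotent}, and that $\mathcal F$-stability plus Corollary~\ref{cor:stab}\ref{cor:stab2} would then identify the corresponding metric profile with $\Phi_{H_{2l+1},S}$, as it already does for the sofic and hyperlinear profiles). Your proposal reproduces exactly this reduction and then, as you yourself say, stops at the point where the paper stops: the $\mathcal F^{fin}$- and $\mathcal F^{lin}$-stability of the Heisenberg relations are open. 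So what you have written is a correct restatement of why the authors believe the conjecture, not a proof of it; the missing stability inputs are the entire content of the problem.

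One step you describe as routine is not, and it affects the viability of your route for the $\mathcal F^{lin}$ part. Corollary~\ref{cor:stab}\ref{cor:stab2} gives $\mathcal D^{lin}_{H_{2l+1},S}(n)\simeq \Phi^{\mathcal F^{lin}}_{H_{2l+1},S}(n)$, where $\Phi^{\mathcal F^{lin}}$ is the purely algebraic LE-$\mathcal F^{lin}$ growth of Definition~\ref{def:lefgrowth}: the least $k$ such that $B_{H_{2l+1},S}(n)$ admits a monomorphism on the ball into $GL(k,\mathbb K)$. The Cayley-type embedding of finite quotients only yields $\Phi^{\mathcal F^{lin}}\preccurlyeq\Phi$, not the equivalence you assert. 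In fact, whenever $H_{2l+1}$ itself embeds into $GL(l+2,\mathbb K)$ (e.g.\ in characteristic $0$, via unipotent upper-triangular matrices), the restriction of that embedding to each ball shows $\Phi^{\mathcal F^{lin}}_{H_{2l+1},S}(n)$ is \emph{bounded}, hence not $\simeq n^{2(2l+1)}$. Consequently, over such fields the stability route would produce a bounded answer for $\mathcal D^{lin}$ rather than the conjectured polynomial one, and cannot deliver the lower bound; for the linear sofic part you are forced into the direct argument you mention only as a fallback, exploiting the discreteness of the values of $d_{\mathrm{rank}}$ together with the separation constant $1/4$, in the spirit of Example~\ref{ex:ab} and the discussion following Conjecture~\ref{conj:ablin}. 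The $\mathcal F^{fin}$ part of your reduction is sound as a conditional statement (there $\Phi^{\mathcal F^{fin}}\simeq\Phi$ does hold for finitely presented residually finite groups), but the required metric-independent stability over all finite bi-invariant metric groups remains, as you note, entirely open.
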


Here is an ambitious generalization of Example~\ref{ex:ab} and Example~\ref{ex:stablegps}.
\begin{conjecture}\label{conj:nilp} Let $G$ be a finitely generated virtually nilpotent group. Then
$$\mathcal{D}_{G, S}^{sof} (n)\simeq\mathcal{D}_{G, S}^{hyp} (n)\simeq\mathcal{D}_{G, S}^{lin} (n)\simeq\mathcal{D}_{G, S}^{fin} (n)\simeq \Phi_{G,S}(n).$$
\end{conjecture}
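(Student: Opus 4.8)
The plan is to reduce the whole statement to the single coincidence $\mathcal{D}_{G,S}^{\bullet}\simeq\Phi_{G,S}$ for each of the four families, because the upper bounds $\mathcal{D}_{G,S}^{sof}(n)\leqslant\Phi_{G,S}(n)$, $\mathcal{D}_{G,S}^{hyp}(n)\leqslant\Phi_{G,S}(n)$, $\mathcal{D}_{G,S}^{lin}(n)\leqslant\Phi_{G,S}(n)$ and $\mathcal{D}_{G,S}^{fin}(n)\leqslant\Phi_{G,S}(n)$ are already recorded in Section~\ref{sec:lea}: they follow from residual finiteness of nilpotent groups together with Cayley's embedding. Only the matching lower bounds $\Phi_{G,S}\preccurlyeq\mathcal{D}_{G,S}^{\bullet}$ remain, and these are exactly what $\mathcal F$-stability delivers. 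First I would pass to a torsion-free nilpotent finite-index subgroup $H\leqslant G$: by Proposition~\ref{Proposition:finite-index}\ref{i:fi} (together with the remark following it for the weakly sofic profile, and the standard finite-index invariance of the full residual finiteness growth) all five functions are unchanged up to $\simeq$, so it suffices to treat $H$.

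Second, I would invoke Corollary~\ref{cor:stab}\ref{cor:stab2}: once $H$ is shown to be both $\mathcal F$-approximable and $\mathcal F$-stable for a given family $\mathcal F$, one obtains $\mathcal{D}_{H,S}^{\mathcal F}\simeq\mathcal{D}_{H,S}^{\mathcal F_{\{0,1\}}}\simeq\Phi_{H,S}^{\mathcal F}$, and the LE-$\mathcal F$ growth $\Phi_{H,S}^{\mathcal F}$ coincides with $\Phi_{H,S}$ because $H$ is finitely presented and fully residually $\mathcal F$ (Theorem~\ref{thm:stab} and the discussion after Corollary~\ref{cor:stab}). Approximability is automatic, since torsion-free nilpotent groups are residually finite and hence sofic, hyperlinear, linear sofic and weakly sofic. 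Thus the entire statement is reduced to establishing $\mathcal F$-stability of $H$ for $\mathcal F\in\{\mathcal F^{sof},\mathcal F^{hyp},\mathcal F^{lin},\mathcal F^{fin}\}$.

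The sofic and hyperlinear cases are the most tractable: $\mathcal F^{sof}$-stability of finitely generated nilpotent groups is available from~\cite{oren_irs}, and $\mathcal F^{hyp}$-stability of the Heisenberg groups from~\cite{hadwin_HS}, so these two pieces follow exactly along the lines of Example~\ref{ex:stablegps}. The remaining work for the hyperlinear family is to propagate $\mathcal F^{hyp}$-stability from the abelian (\cite{glebsky2010commuting}) and Heisenberg cases to an arbitrary torsion-free nilpotent $H$, presumably by induction on the nilpotency class using the central extension structure and the stability of the commutator law. The genuinely hard part is the linear sofic family $\mathcal F^{lin}$, and to a lesser extent the weakly sofic family $\mathcal F^{fin}$. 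Indeed $\mathcal F^{lin}$-stability is open already for virtually abelian groups (this is Conjecture~\ref{conj:ablin}), and no analogue of the Hamming and Hilbert-Schmidt commutator-stability results is yet known for the rank distance (see~\cite{elek2017commuting} for a partial result). I would attack it by mimicking the discreteness argument of Example~\ref{ex:ab}: the rank distance on $GL(k,\mathbb K)$ takes values in $\tfrac1k\mathbb Z$, so a rank $\delta$-solution of the nilpotent relators, after the rank-amplification of~\cite{arzhantseva_linear_2012}, should force the approximate Jordan structure to be exactly multiplicative and thus be correctable to a genuine representation; making this quantitative and uniform in $k$ is the crux. The weakly sofic case needs the same perturbation statement but for an \emph{arbitrary} bi-invariant metric on the approximating finite groups, which removes the analytic tools present in the Hamming and Hilbert-Schmidt settings.

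I expect the main obstacle to be precisely this last step, namely proving $\mathcal F^{lin}$- and $\mathcal F^{fin}$-stability of nilpotent groups. Absent a general stability theorem, a natural fallback is to bypass Corollary~\ref{cor:stab}\ref{cor:stab2} and prove the lower bound $\Phi_{H,S}\preccurlyeq\mathcal{D}_{H,S}^{lin}$ directly, by showing that any rank-approximation of the ball $B_{H,S}(n)$ of dimension $k$ yields a finite quotient of $H$ of order polynomially bounded in $k$ that separates $B_{H,S}(n)$ from the identity. This is a weaker, metric-specific statement than full stability, and it may be within reach by exploiting the rigidity of the integral structure of $H$, which is what ultimately pins $\Phi_{H,S}$ to the polynomial growth $n^{\dim H}$ of~\cite{bou-rabee_nilpotent}.
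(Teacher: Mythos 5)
The statement you are addressing is stated in the paper as Conjecture~\ref{conj:nilp}; the paper offers no proof of it, only the partial results of Example~\ref{ex:stablegps} and the surrounding discussion. Your reduction is exactly the one the authors have in mind: pass to a finite-index nilpotent subgroup via Proposition~\ref{Proposition:finite-index}\ref{i:fi} (and its following remark for $\mathcal{D}^{fin}$), use the upper bounds $\mathcal{D}^{\bullet}_{G,S}(n)\leqslant\Phi_{G,S}(n)$ recorded in Section~\ref{sec:lea}, and obtain the matching lower bounds from $\mathcal F$-stability via Corollary~\ref{cor:stab}\ref{cor:stab2} together with $\Phi^{\mathcal F}_{G,S}\simeq\Phi_{G,S}$ and the computation of $\Phi_{G,S}$ in \cite{bou-rabee_nilpotent}. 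This does settle the sofic case, via $\mathcal F^{sof}$-stability of polycyclic groups from \cite{oren_irs}, exactly as in Example~\ref{ex:stablegps}.

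However, what you have written is a programme rather than a proof, and the remaining steps are precisely the open problems that make the statement a conjecture. Concretely: (a) $\mathcal F^{hyp}$-stability is only available for abelian groups \cite{glebsky2010commuting} and the Heisenberg groups \cite{hadwin_HS}; your proposed induction on nilpotency class through central extensions is not carried out, and no result cited in the paper propagates stability through central extensions. (b) $\mathcal F^{lin}$-stability is open already for virtually abelian groups --- this is exactly why Conjecture~\ref{conj:ablin} is itself a conjecture, with \cite{elek2017commuting} only a partial result --- and your sketch via discreteness of $d_{\mathrm{rank}}$ plus rank amplification does not engage the actual difficulty: a $\delta$-solution of the relators need only be $\delta$-close to multiplicative, and the granularity $1/k$ of the rank metric forces exact relations only when $k\preccurlyeq n$, whereas stability must be uniform over all $k$. (c) The weakly sofic case requires a perturbation statement uniform over \emph{all} bi-invariant metrics on \emph{all} finite groups, for which you propose no technique. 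Your fallback --- proving $\Phi_{H,S}\preccurlyeq\mathcal{D}^{lin}_{H,S}$ directly without stability, in the spirit of Example~\ref{ex:ab} --- is a sensible alternative and is arguably the more promising route for a lower bound (where $k\preccurlyeq n$ is the relevant regime), but it is likewise only a suggestion: one would have to control all relators of $H$ simultaneously and extract a genuine finite quotient of polynomially bounded order, and no such argument is supplied. In short, you have correctly reassembled the paper's own reduction and correctly located the missing ingredients, but no new argument is given for any of them.
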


To approach the weakly sofic part of the preceding conjecture from an ambient class of groups, we consider arbitrary residually finite groups.

\begin{question}\label{q:lef}
Does there exist a finitely generated residually finite group with $\mathcal{D}_{G, S}^{fin} (n)\not\simeq \Phi_{G,S}(n)$?
\end{question}

The next question explores a possibility to describe weakly sofic groups in purely algebraic terms (see~\cite{glebskyA} for such a result on sofic groups).

\begin{question}\label{q:wsA}
 Is there a characterization of weakly sofic groups with no reference to any non-trivial bi-invariant distances $d_\alpha$ on finite approximating groups $H_\alpha$?
 \end{question}

In Section~\ref{sec:direct}, we give an upper bound on the sofic profile of a free product of sofic groups. Other profiles of free products remain unexplored. Recall that $F_2$ denotes a free group of rank~2.
 
\begin{conjecture}\label{conj:lsfree}
Let $G$ and $H$ be linear sofic groups. Then 
$$\mathcal{D}^{lin}_{G\ast H}\left(
n\right) \preccurlyeq \left(\mathcal{D}^{lin}_{G}\left( n\right) + \mathcal{D}^{lin}_{H}\left(
n\right)\right) \Phi_{F_{2}}\left( n\right).$$

\end{conjecture}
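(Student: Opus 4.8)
The plan is to quantify the construction showing that a free product of linear sofic groups is linear sofic~\cite[Theorem 5.6]{stolz2013properties}, which is itself the rank-metric analogue of the Elek--Szab\'o gluing for sofic groups~\cite[Theorem 1]{elek_sofic_2006}, and to track the dimensions throughout. Fix $n$ and a constant $C$ absorbing the changes of generating sets. By Definition~\ref{def:lsof} I would choose linear sofic approximations $\alpha\colon G\to GL(V_G)$ and $\beta\colon H\to GL(V_H)$ over $\mathbb K$ that are good on $B_{G,R}(Cn)$, respectively $B_{H,T}(Cn)$, with $\dim V_G=p\leqslant \mathcal D^{lin}_{G}(Cn)$ and $\dim V_H=q\leqslant \mathcal D^{lin}_{H}(Cn)$. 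The combinatorial skeleton of the free product, namely which vertex of the Bass--Serre tree an element of $B_{G\ast H,S}(n)$ sits over, is governed by a free group, so I would also fix a finite quotient $\pi\colon F_2\twoheadrightarrow Q$ whose kernel meets $B_{F_2}(Cn)$ only at the identity and with $|Q|\leqslant \Phi_{F_2}(Cn)$. The target is a map $\Psi\colon G\ast H\to GL(W)$ with $\dim W$ of order $|Q|\,(p+q)$.

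The construction is induced from a finite, locally tree-like approximation of the Bass--Serre tree of $G\ast H$ built from $Q$. I would take $W=(\mathbb K^{Q}\otimes V_G)\oplus(\mathbb K^{Q}\otimes V_H)$, of dimension $|Q|(p+q)$; the two summands play the roles of the $G$-vertices and the $H$-vertices of the finite graph, each copy indexed by $Q$ carrying the local approximation $\alpha$ or $\beta$, while $\pi$ prescribes how the generators move between vertices. This is exactly where the improvement from a product to the sum $\mathcal D^{lin}_G+\mathcal D^{lin}_H$ originates: whereas the permutation gluing of Elek--Szab\'o forces the degrees to multiply, the rank distance is additive under block-diagonal sums (cf.\ the identity $d_{\rm{rank}}(u\oplus v,e)=\tfrac{m\,d_{\rm{rank}}(u,e)+q\,d_{\rm{rank}}(v,e)}{m+q}$ used in the proof of Proposition~\ref{Proposition:finite-index}), so one may afford to place the $G$- and $H$-data in complementary direct summands rather than in a tensor product.

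It then remains to verify the two conditions of Definition~\ref{def:app} for $\Psi$ on $B_{G\ast H,S}(n)$. The almost-homomorphism estimate (condition (1)) should follow by writing an element of the ball as a short reduced word, propagating the errors of $\alpha$ and $\beta$ through the direct sum and the gluing, and invoking the telescoping bounds of Lemma~\ref{Lemma:approximate-homomorphism}; since the errors accumulate only over the $O(n)$ letters and are each of size $O(1/Cn)$, one recovers the required bound $<1/n$. For the uniform injectivity (condition (2)) I would use that $\pi$ is injective on $B_{F_2}(Cn)$: a nontrivial reduced word $w$ of length $\leqslant n$ in $G\ast H$ translates along a genuine non-backtracking path in the finite graph, because its skeleton survives in $Q$, so $\Psi(w)$ should displace a definite proportion of the direct summands and hence have rank distance bounded below from the identity.

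The hard part will be precisely this rank-metric freeness estimate. Unlike the Hamming distance, where displaced coordinates contribute additively and the Elek--Szab\'o count is transparent, the rank of a product of the local blocks is only submultiplicatively controlled, so one must rule out rank cancellation of alternating, genuinely reduced words below the threshold $1/4$ (equivalently $\tfrac18$ for the projective rank pseudometric of Definition~\ref{def:plsof}). I expect this to require the rank amplification of~\cite{arzhantseva_linear_2012} together with a careful analysis of how ranks behave along non-backtracking paths in the finite graph, after which passing between $\mathcal D^{lin}$ and $\mathcal D^{\overline{lin}}$ recovers the stated constant. Combining the dimension count $\dim W\leqslant |Q|(p+q)\leqslant \Phi_{F_2}(Cn)\bigl(\mathcal D^{lin}_G(Cn)+\mathcal D^{lin}_H(Cn)\bigr)$ with the $\simeq$-invariance under the change of generating set then yields the claimed $\preccurlyeq$.
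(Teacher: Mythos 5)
The statement you are proving is stated in the paper as Conjecture~\ref{conj:lsfree}; the paper offers no proof of it and explicitly lists the linear sofic profile of free products among the unexplored questions (Section~\ref{sec:direct} only records the sofic-profile bound extracted from Elek--Szab\'o, and says one ``might extract'' a linear sofic analogue from \cite[Theorem 5.6]{stolz2013properties}). So there is no argument in the paper to compare yours against, and your proposal should be judged as an attempt at an open problem. As such it is a reasonable plan, but it is not a proof: you yourself defer the ``rank-metric freeness estimate,'' and that deferred step is precisely where the construction as you have set it up breaks.

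Concretely, the direct-sum placement $W=(\mathbb K^{Q}\otimes V_G)\oplus(\mathbb K^{Q}\otimes V_H)$, which is the only source of the sum $\mathcal D^{lin}_G+\mathcal D^{lin}_H$ in your dimension count, is incompatible with the uniform injectivity constant $1/4$ of Definition~\ref{def:lsof}. The very identity you quote, $d_{\rm{rank}}(u\oplus v,e)=\bigl(m\,d_{\rm{rank}}(u,e)+q\,d_{\rm{rank}}(v,e)\bigr)/(m+q)$, shows that if a nontrivial $g\in G$ acts (essentially) trivially on the $H$-summand, then $d_{\rm{rank}}(\Psi(g),e)\leqslant p/(p+q)$, which is far below $1/4$ whenever $q\gg p$; even after balancing the two summands by padding you lose at least a factor $1/2$ and land below the threshold. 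Restoring the constant requires the rank amplification of \cite{arzhantseva_linear_2012}, but amplification proceeds by tensor (and Schur) powers, so it raises the dimension to a power $\ell$ of $|Q|(p+q)$, where $\ell$ depends on how badly the separation has degraded; since that degradation is not uniformly bounded in $n$, the claimed bound $\dim W\preccurlyeq \Phi_{F_2}(n)\bigl(\mathcal D^{lin}_G(n)+\mathcal D^{lin}_H(n)\bigr)$ does not survive. To salvage the approach you would need either a gluing in which every syllable of a reduced word displaces a definite fraction of \emph{all} of $W$ (not just its own summand), or an argument that a bounded, $n$-independent amount of amplification suffices; neither is supplied, and this is the actual content of the conjecture rather than a routine verification.
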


\begin{question}\label{q:hypfree} 
Let $G$ and $H$ be hyperlinear groups. Find an upper bound on $\mathcal{D}^{hyp}_{G\ast H}\left(
n\right)$ in terms of  $\mathcal{D}^{hyp}_{G}\left( n\right)$ and  $\mathcal{D}^{hyp}_{H}\left(
n\right)$. Is it true that
$$\mathcal{D}^{hyp}_{G\ast H}\left(
n\right) \preccurlyeq \left(\mathcal{D}^{hyp}_{G}\left( n\right) + \mathcal{D}^{hyp}_{H}\left(
n\right)\right) \Phi_{F_{2}}\left( n\right)?$$
 \end{question}

\begin{question}\label{q:leffp}
Let $G$ and $H$ be LE-$\mathcal F$ groups. Estimate $\Phi^{\mathcal F}_{G\ast H}\left(
n\right)$ in terms of  $\Phi_G^{\mathcal F}\left( n\right)$ and  $\Phi^{\mathcal F}_{H}\left(
n\right)$. 

\end{question}
\begin{question}\label{q:leafp}
Let $G$ and $H$ be LEA groups. Estimate $LEA_{G\ast H}\left(
n\right)$ in terms of  $LEA_G\left( n\right)$ and  $LEA_{H}\left(
n\right)$.

\end{question}

\begin{question}\label{q:sln}
Let $\mathcal{F}\in\{ \mathcal F^{sof}, \mathcal F^{hyp}, \mathcal F^{lin}, \mathcal F^{fin}, \mathcal F^{ct},  \mathcal F^{fin}_{\{0,1\}} \}$ and $m\geqslant 3$. What is $\mathcal{D}_{SL_m(\mathbb Z), S}^{\mathcal F} (n)$?
\end{question}

\bibliographystyle{amsplain}
\bibliography{quantify}

\end{document}